\theoremstyle{plain}
\newtheorem{theorem}{Theorem}[section]
\newtheorem{lemma}[theorem]{Lemma}
\newtheorem{proposition}[theorem]{Proposition}
\newtheorem{corollary}[theorem]{Corollary}
\theoremstyle{definition}
\theoremstyle{remark}
\newtheorem{remark}[theorem]{Remark}
\begin{document}
	\title[The cyclic graph of a semigroup]{The cyclic graph of a semigroup}
	\author[Sandeep Dalal, Jitender Kumar, Siddharth Singh]{Sandeep Dalal, Jitender Kumar, Siddharth Singh}
	\address{Department of Mathematics, Birla Institute of Technology and Science Pilani, Pilani, India}
	\email{deepdalal10@gmail.com,jitenderarora09@gmail.com,sidharth$\_$0903@hotmail.com}
	
\begin{abstract}
 The cyclic graph $\Gamma(S)$ of a semigroup $S$ is the simple graph  whose vertex set is $S$ and two vertices $x, y$ are adjacent if the subsemigroup generated by $x$ and $y$ is monogenic. In this paper, we classify the  semigroup $S$ such that whose cyclic graph $\Gamma(S)$ is complete, bipartite, tree, regular  and a null graph, respectively. Further, we determine the clique number of $\Gamma(S)$ for an arbitrary semigroup $S$. We obtain the independence number of $\Gamma(S)$ if $S$ is a finite monogenic semigroup. At the final part of this paper, we give   bounds for independence number of $\Gamma(S)$ if $S$ is a semigroup of bounded exponent and we also characterize the semigroups attaining the bounds. 
\end{abstract}

	\subjclass[2010]{20M10}
	
	\keywords{}
	
	\maketitle
 
\section{Introduction}
The investigation of graphs associated to semigroups is a large research area. In 1964, Bosak \cite{b.bosak1964graphs} studied certain graphs over semigroups. There are various graphs associated with semigroups viz. Cayley graphs (\cite{a.brandt,a.KhosraviCayleygraphs2012}), directed power graphs  \cite{a.kelarev2002directed,a.kelarev2001powermatrices}, undirected power graphs  \cite{a.MKsen2009,a.shitov}, commuting graphs \cite{a.Araujo2015,a.Araujo2011,a.Shitov2020}. A significant number of research papers devoted to cyclic graphs overs groups and semigroups. In 2007, Abdollahi et al. \cite{a.Abdollahi2007-noncyclic} introduced the noncyclic graph\index{noncyclic graph} of a group $G$. The noncyclic graph of a group $G$ is the simple undirected graph whose vertex set is $G \setminus T$, where $T = \{x \in G :  \; {\rm the \; subgroup \; generated \; by} \; x, y \; {\rm is \; cyclic \; for \; all} \; y \in G \}$ and two distinct vertices $x, y$ are adjacent if the subgroup generated by $x, y$ is not cyclic. The complement of noncyclic graph of $G$ is called the cyclic graph of $G$. Further, the notion of cyclic graph was slightly modified by Ma et al. \cite{a.Ma2013} as follows. The cyclic graph $\Gamma(G)$ is the simple undirected graph whose vertex set is $G$ and two distinct vertices $x$ and $y$ are adjacent if the subgroup generated by $x, y$ is cyclic. Ma et al. \cite{a.Ma2013} studied the graph-theoretic properties of cyclic graph of a finite group namely, bipartite, diameter, clique number etc. Particularly, they have investigated the graph invariants of the cyclic graph $\Gamma(G)$ when $G$ is dihedral group $D_{2n}$ or dicyclic group $Q_{4n}$.  Also, \cite{a.Ma2013} proved that ${\rm Aut}(G) = {\rm Aut}(\Gamma(G))$ if and only if $G$ is a Klein group $\mathbb Z_2 \times \mathbb Z_2$. The definition of cyclic graph has been extended to semigroups by Afkhami et al. \cite{a.afkhami2014cyclic}. The cyclic graph $\Gamma(S)$ is the simple undirected graph whose vertex set is the semigroup $S$ and two distinct vertices $x$ and $y$ are adjacent if $\langle x, y \rangle$ is a monogenic  subsemigroup of $S$. Afkhami et al. \cite{a.afkhami2014cyclic} provided the structure of $\Gamma(S)$, where $S$ is a finite semigroup and discussed the graph-theoretic properties of $\Gamma(S)$, viz. planarity, genus, girth etc.

This paper is structured as follows.  In Section $2$, we provide necessary background material and fix our
notations used throughout the paper. In Section $3$, we provide the structure of $\Gamma(S)$ and then some basic properties viz. bipartite, completeness, tree, regular, null graph etc. are discussed. Section $4$ investigates the clique number of $\Gamma(S)$ and classifies the semigroup $S$ such that the clique number of  $\Gamma(S)$ is finite. In  Section $5$, we examine the independence number of $\Gamma(S)$.

\section{Preliminaries}
In this section, we recall necessary definitions, results and notations of graph theory from \cite{b.West} and semigroup theory from \cite{b.Howie}. 
A graph $\mathcal{G}$ is a pair  $ \mathcal{G} = (V, E)$, where $V = V(\mathcal{G})$ and $E = E(\mathcal{G})$ are the set of vertices and edges of $\mathcal{G}$, respectively. We say that two different vertices $a, b$ are $\mathit{adjacent}$, denoted by $a \sim b$, if there is an edge between $a$ and $b$. We are considering simple graphs, i.e. undirected graphs with no loops  or repeated edges. If $a$ and $b$ are not adjacent, then we write $a \nsim b$. The \emph{neighbourhood} $ N(x) $ of a vertex $x$ is the set all vertices adjacent to $x$ in $ \mathcal G $. Additionally, we denote $N[x] = N(x) \cup \{x\}$. A subgraph  of a graph $\mathcal{G}$ is a graph $\mathcal{G}'$ such that $V(\mathcal{G}') \subseteq V(\mathcal{G})$ and $E(\mathcal{G}') \subseteq E(\mathcal{G})$. A \emph{walk} $\lambda$ in $\mathcal{G}$ from the vertex $u$ to the vertex $w$ is a sequence of  vertices $u = v_1, v_2,\cdots, v_{m} = w$ $(m > 1)$ such that $v_i \sim v_{i + 1}$ for every $i \in \{1, 2, \ldots, m-1\}$. If no edge is repeated in $\lambda$, then it is called a \emph{trail} in $\mathcal{G}$. A trail whose initial and end vertices are identical is called a \emph{closed trail}. A walk is said to be a \emph{path} if no vertex is repeated.  The length of a path is the number of edges it contains. If $U \subseteq V(\mathcal{G})$, then the  subgraph of $\mathcal{G}$ induced by  $U$ is the graph $\mathcal{G}'$ with vertex set $U$, and with two vertices adjacent in $\mathcal{G}'$ if and only if they are adjacent in $\mathcal{G}$. A graph  $\mathcal{G}$ is said to be \emph{connected} if there is a path between every pair of vertex. A graph $\mathcal{G}$ is said to be \emph{complete} if any two distinct vertices are adjacent.  A path that begins and ends on the same vertex is called a \emph{cycle}.

A \emph{clique}\index{clique} of a graph $\mathcal{G}$ is a complete subgraph of $\mathcal{G}$ and the number of vertices in a  clique of maximum size is called the \emph{clique number} of $\mathcal{G}$ and it is denoted by $\omega({\mathcal{G}})$. An \emph{independent set}\index{independent set} of a graph $\mathcal{G}$ is a subset  of $V(\mathcal{G})$ such that no two vertices in the subset are adjacent in $\mathcal{G}$. The \emph{independence number}\index{independence number} of  $\mathcal{G}$ is the maximum size of an independent  set, it is denoted by $\alpha(\mathcal{G})$. A graph $\mathcal{G}$ is said to be \emph{bipartite}\index{bipartite} if $V(\mathcal{G})$ is the union of two disjoint independent sets.  A graph $\mathcal{G}$ is called a \emph{complete bipartite}\index{complete! bipartite} if $\mathcal{G}$ is bipartite with $V(\mathcal{G}) = A \cup B$, where $A$ and $B$ are disjoint independent sets such that $x \sim y$ if and only if $x \in A$ and $y \in B$. We shall denote it by $K_{n, m}$, where $|A| = n$ and $|B| = m$. A graph $\mathcal{G}$ is said to be a \emph{star graph}\index{star graph} if $\mathcal{G} = K_{1, n}$ for some $n \in \mathbb N$.

\begin{theorem}[{\cite[Theorem 1.2.18]{b.West}}]\label{ch1-bipartite}
A graph  $\mathcal{G}$ is bipartite if and only if $\mathcal{G}$ does not contain an odd cycle.
\end{theorem}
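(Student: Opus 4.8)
The plan is to prove the two implications separately, treating the ``only if'' direction as routine and concentrating the effort on the ``if'' direction. For the forward direction, suppose $\mathcal{G}$ is bipartite with $V(\mathcal{G}) = A \cup B$, where $A$ and $B$ are disjoint independent sets. Any cycle $v_1, v_2, \ldots, v_k, v_1$ must have consecutive vertices in opposite parts, since neither part contains an edge. Starting with $v_1 \in A$, the vertices alternate $A, B, A, B, \ldots$, and to close the cycle via the edge $v_k v_1$ we need $v_k \in B$, which forces $k$ to be even. Hence every cycle has even length, so $\mathcal{G}$ contains no odd cycle.

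For the converse, suppose $\mathcal{G}$ has no odd cycle and exhibit a bipartition. First I would reduce to the connected case: a graph is bipartite if and only if each of its connected components is bipartite, and it contains an odd cycle if and only if some component does, since every cycle lies within a single component. So assume $\mathcal{G}$ is connected, fix a vertex $r$, and let $d(r, v)$ denote the length of a shortest $r$--$v$ path (which exists by connectedness). Set $A = \{v : d(r,v) \text{ is even}\}$ and $B = \{v : d(r,v) \text{ is odd}\}$, so that $V(\mathcal{G}) = A \cup B$; it then remains only to verify that $A$ and $B$ are each independent.

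The hard part will be showing that no edge joins two vertices of the same parity class. I would argue by contradiction: suppose $u \sim v$ with $d(r,u)$ and $d(r,v)$ of equal parity. Take a shortest $r$--$u$ path $P$ and a shortest $r$--$v$ path $Q$, and let $w$ be the last vertex common to $P$ and $Q$ as they are traversed from $r$, so that the arcs of $P$ and $Q$ lying beyond $w$ meet only at $w$. Since a subpath of a shortest path is shortest, the $r$--$w$ portions of $P$ and $Q$ both have length $d(r,w)$, so the $w$--$u$ arc of $P$ has length $d(r,u) - d(r,w)$ and the $w$--$v$ arc of $Q$ has length $d(r,v) - d(r,w)$. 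Concatenating the $w$--$u$ arc, the edge $uv$, and the $v$--$w$ arc yields a closed walk of length $d(r,u) + d(r,v) - 2\,d(r,w) + 1$, which is odd because $d(r,u)$ and $d(r,v)$ share parity. The main subtlety is confirming that this closed walk is a genuine odd \emph{cycle} rather than merely an odd closed walk; choosing $w$ as the last common vertex, together with the fact that $P$ and $Q$ are shortest paths, is precisely what guarantees the two arcs share only $w$ and that $u \neq v$, so the subgraph obtained is a single cycle. This odd cycle contradicts the hypothesis, forcing $A$ and $B$ to be independent, and hence $\mathcal{G}$ is bipartite.
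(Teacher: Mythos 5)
The paper does not prove this statement at all: it is imported verbatim as \cite[Theorem 1.2.18]{b.West} and used as a black box (in the proof of Theorem \ref{bipartite}), so there is no internal proof to compare against. Your argument is correct and is essentially the classical one found in West. The forward direction (parity alternation around a cycle) is routine and fine. In the converse, your distance-parity bipartition $A=\{v: d(r,v) \text{ even}\}$, $B=\{v: d(r,v)\text{ odd}\}$ is the standard construction; the one point of divergence is at the end, where West invokes a separate lemma (every closed \emph{walk} of odd length contains an odd cycle), while you extract the odd cycle directly by taking $w$ to be the last common vertex of the two shortest paths $P$ and $Q$. You correctly identified this as the delicate step, and your justification goes through: on a shortest path from $r$, the position of a vertex equals its distance from $r$, so the common vertices of $P$ and $Q$ appear on both paths in the same order (ordered by distance), the vertex $w$ maximizing $d(r,w)$ among common vertices is the last common vertex on \emph{both} paths, and no other common vertex can lie beyond $w$ on either path; hence the two arcs together with the edge $uv$ form a genuine cycle of odd length $d(r,u)+d(r,v)-2d(r,w)+1$. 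One microscopic case worth recording explicitly: if $w=u$ (or $w=v$), the corresponding arc degenerates to a single vertex, but the configuration is still a cycle of odd length at least $3$, since $d(r,u)=d(r,v)$ would force $u=v$, contradicting $u\sim v$ in a simple graph. So your proof is complete; it trades West's odd-closed-walk lemma for a direct cycle extraction, at the cost of the last-common-vertex bookkeeping you supplied.
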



\begin{theorem}[{{\cite[Theorem 1.2.26]{b.West}}}]\label{Eulerian}
A connected graph is Eulerian if and only if its every vertex is of even degree.
\end{theorem}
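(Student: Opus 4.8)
The plan is to prove the equivalence in two directions, taking the standard reading that a connected graph $\mathcal{G}$ is \emph{Eulerian} precisely when it admits a closed trail traversing every edge exactly once (an Eulerian circuit). The necessity direction is the routine one. First I would assume such a closed trail $T = v_1, v_2, \ldots, v_m = v_1$ exists and fix any vertex $x$. Every occurrence of $x$ as an interior vertex $v_i$ of $T$ is flanked by the two distinct edges $v_{i-1}v_i$ and $v_i v_{i+1}$, and the closedness of the trail supplies the same pairing at the endpoint. Since $T$ uses each edge of $\mathcal{G}$ exactly once and $\mathcal{G}$ is connected (so no edge incident to $x$ is missed), the edges at $x$ partition into these adjacent pairs, forcing $\deg(x)$ to be even.

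For sufficiency I would argue by induction on the number of edges $|E(\mathcal{G})|$. The base case of no edges (a single vertex, by connectedness) is vacuously Eulerian. For the inductive step, assume $\mathcal{G}$ is connected with all degrees even and at least one edge; then no vertex is isolated, so every vertex has degree at least $2$, and a graph of minimum degree at least $2$ contains a cycle $C$. Deleting the edges of $C$ yields a graph $\mathcal{G}'$ in which every vertex still has even degree, and each nontrivial connected component of $\mathcal{G}'$ has strictly fewer edges than $\mathcal{G}$. By the induction hypothesis each such component possesses an Eulerian circuit. I would then build a global Eulerian circuit by traversing $C$ and, the first time the walk reaches a vertex lying in some edge-bearing component of $\mathcal{G}'$, detouring through that component's Eulerian circuit before resuming along $C$; connectedness of $\mathcal{G}$ guarantees that $C$ meets every component of $\mathcal{G}'$ carrying edges, so no edge is omitted.

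The main obstacle is this splicing step in the sufficiency direction: one must verify that the detours are inserted consistently so that the resulting sequence is genuinely a single closed trail with no repeated edge. I expect to handle this by careful bookkeeping, recording for each component a single entry vertex on $C$ and checking that inserting the component's circuit at that entry point produces a closed walk still using each edge exactly once. An alternative I would keep in reserve, which avoids the gluing, is the maximal-trail approach: take a closed trail $T$ of maximum length and suppose it is not Eulerian. Using that all degrees are even, one shows that any vertex the trail enters can also be left, so a missed edge incident to $T$ (which exists by connectedness) can be grown into a closed trail and spliced into $T$, contradicting maximality.
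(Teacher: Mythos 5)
This statement is quoted in the paper as background (West, Theorem 1.2.26) and the paper gives no proof of it, so there is nothing paper-specific to compare against; your argument is the classical one and is essentially the proof in West itself. Your proposal is correct: necessity by pairing the two trail-edges at each visit of a vertex, sufficiency by induction after deleting a cycle $C$ (every vertex of positive even degree has degree at least $2$, hence a cycle exists), and the one point you rightly flagged --- that every edge-bearing component of $\mathcal{G}'$ meets $C$ --- is settled by noting that for any vertex $u$ of such a component, the initial segment of a $u$--$C$ path in $\mathcal{G}$ up to its first edge of $C$ lies entirely in $\mathcal{G}'$, so $u$'s component contains a vertex of $C$, after which the first-visit splicing yields a single closed trail using each edge exactly once.
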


Now we anamnesis some basic definitions and results on semigroups. A \emph{semigroup} is a non-empty set $S$ together with an associative binary operation on $S$. We say $S$ to be a \emph{monoid}\index{Monoid} if it contains an identity element $e$. A monoid $S$ is said to be a \emph{group} if for each $x$ there exists $x^{-1} \in S$ such that $xx^{-1} = x^{-1}x = e$. A \emph{subsemigroup} of a semigroup is a subset that is also a semigroup under the same operation. A subsemigroup of $S$ which is a group with respect to the multiplication inherited from $S$ will be called  \emph{subgroup}. A non-empty subset $I$ of a semigroup $S$ is said to be an \emph{ideal} of $S$ if $SIS \subseteq I$. An element $a$ of a semigroup $S$ is \emph{idempotent}\index{idempotent element}   if $a^2 = a$ and the set of all idempotents in $S$ is denoted by $E(S)$. A \emph{band}\index{band} is a semigroup  in which every element is idempotent. For a subset $X$ of a semigroup $S$, the subsemigroup generated by $X$, denoted by $\langle X \rangle$, is the intersection of all the subsemigroup of $S$ containing $X$ and it is the smallest subsemigroup of $S$ containing $X$. The subsemigroup $\langle X \rangle$ is the set of all the elements in $S$ that can be written as finite product of elements of $X$. If $X$ is finite then $\langle X \rangle$ is called finitely generated subsemigroup of $S$. A semigroup $S$ is called \emph{monogenic}\index{monogenic semigroup} if there exists $a \in S$ such that $S = \langle a \rangle$. Clearly, $\langle a \rangle = \{a^m \; : \; m \in \mathbb{N}\}$, where $\mathbb{N}$ is the set of positive integers. A subgroup generated by $X$ can be defined analogously. If $X = \{a\}$, then the subgroup generated by $X$ is called \emph{cyclic}\index{cyclic subgroup}. Note that the cyclic subgroup generated by $a$ is $\langle a \rangle = \{a^m : \; m \in \mathbb Z \}$.

For  $X \subseteq S$, the number of elements in $X$ is called the order (or size) of $X$ and it is denoted by $|X|$. The $\mathit{order}$ of an element $a\in S$, denoted by $o(a)$, is defined as  $|\langle a \rangle|$. The set $\pi(S)$ consists order of all the elements of a semigroup $S$. In case of finite monogenic semigroup, there are repetitions among the powers of $a$. Then the set
\[\{x \in \mathbb{N} : (\exists \; y \in \mathbb{N}) a^x = a^y, x \ne y\}\]
is non-empty and so has a least element. Let us denote this least element by $m$ and call it the \emph{index}\index{index} of the element $a$. Then the set
\[\{x \in \mathbb{N} \; : \; a^{m + x} = a^m \}\]
is non-empty and so it too has a least element $r$, which we call the \emph{period}\index{period} of $a$. Let $a$ be an element  with index $m$ and period $r$. Thus, $a^m = a^{m + r}$. It follows that $a^m = a^{m + qr}$ for all $q \in \mathbb{N}$. By the minimality of $m$ and $r$ we may deduce that the powers
$a, a^2, \ldots, a^m, a^{m + 1}, \ldots, a^{m + r-1}$
are all distinct. For every $s \ge m$, by division algorithm we can write $s = m + qr + u$, where $q \ge 0$ and $0 \le u \le r-1$. then it follows that

\[a^s = a^{m + qr}a^u = a^m a^u = a^{m+u}.\]
Thus, $\langle a \rangle = \{a, a^2, \ldots, a^{m + r-1}\}$ and $o(a) = m + r - 1$. The subset \[\mathcal{K}_a = \{a^m, a^{m+1}, \ldots, a^{m+r-1} \}\]  is a subsemigroup of $\langle a \rangle$.  Moreover, there exists $g \in  \mathbb N$ such that $0 \leq g \leq r-1$ and $m + g \equiv 0({\rm mod} \; r)$. Note that $a^{m + g}$ is the idempotent element and so it is the identity element of $\mathcal{K}_a$. Because $a^{(m + g)^2} = a^{2m + 2g} = a^{m + (m +g) + g} = a^{m + tr + g}$ as $m + g \equiv 0({\rm mod} \; r)$ which gives $a^{(m + g)^2} = a^{m +g}$. If we choose $ g' \in  \mathbb N$ such that 
\[0 \leq  g' \leq r-1 \; {\rm and} \; m + g' \equiv 1({\rm mod} \; r),\]

\noindent then $k(m + g') \equiv k ({\rm mod} \; r)$ for all $k \in \mathbb N$, and so the powers $(a^{m + g'})^k$ of $a^{m +g'}$ for $k = 1, 2, \ldots, r$, deplete $\mathcal{K}_a$. Thus, $\mathcal{K}_a$ is the cyclic subgroup of order $r$, generated by $a^{m +g'}$.
Let $a$ be an element of a semigroup $S$ with index $m$ and period $r$. Then the monogenic semigroup $\langle a \rangle$ is denoted by $M(m, r)$. Also, sometimes $M(m, r)$ shall be written as
$\langle a : a^m = a^{m + r}\rangle$. The notations $m_a$ and $r_a$ denotes the index and period of $a$ in $S$, respectively. It is easy to observe that index of every element in a finite group $G$ is one. Consequently, for $a \in G$, we have $\langle a \rangle$ is the cyclic subgroup of $G$. 
 \begin{remark}\label{ch1-Ka^i}
 Let $S = M(m, r) = \langle a \rangle$ be a monogenic semigroup. Then $\mathcal{K}_{a^i} = \langle a^i \rangle \cap \mathcal{K}_a$.
 \end{remark}

A \emph{maximal monogenic subsemigroup} of $S$ is a monogenic subsemigroup of $S$ that is not properly contained in any other monogenic subsemigroup of $S$. We shall denote $\mathcal{M}$  by the set of all elements of $S$ that generates maximal monogenic subsemigroup of $S$ i.e.
$\mathcal{M} = \{a \in S : \; \langle a \rangle \; {\rm is \; a \; maximal \; monogenic \; subsemigroup \; of} \; S \}$. Similarly, a \emph{maximal cyclic subgroup}\index{} of $S$ is a cyclic subgroup of $S$ that is not properly contained in any other cyclic subgroup of $S$. We shall denote $\overline{\mathcal{M}}$  by the set of all elements of $S$ that generates maximal cyclic subgroup of $S$ i.e.
\[\overline{\mathcal{M}} = \{a \in S : \; \langle a \rangle \; {\rm is \; a \; maximal \; cyclic \; subgroup \; of} \; S \}. \]

\emph{Green's relations}\index{Green's relations} were introduced by J.A  Green in $1951$ that characterize the elements of $S$  in terms of principals ideal. They become a standard tool for investigating the structure of semigroup. These relations are defined as
\begin{enumerate}
	\item $ x\; \mathcal L \; y$ if and only if $S^1 x = S^1 y$.
	\item $x \; \mathcal R  \; y$ if and only if $x S^1  = y S^1 $.
	\item $x \; \mathcal J \;  y$ if and only if $S^1 x S^1 = S^1 y S^1$.
	\item $x \; \mathcal H \; y$ if and only if $x \; \mathcal L  \; y$ and $x \; \mathcal R \; y$.
	\item $x \; \mathcal D \; y$ if and only if $x \; \mathcal L \; z$ and $z \; \mathcal R \; y$ for some $z \in S$.
\end{enumerate}
\vspace{.2cm}

\begin{remark}[{{\cite[p. $46$]{b.Howie}}}]\label{ch1-re-Green's-class}
Let $G$ be a group. Then $\mathcal L = \mathcal R = \mathcal H = \mathcal D = \mathcal J = G \times G.$
\end{remark}

\begin{corollary}[{{\cite[Corollary 2.2.6]{b.Howie}}}]\label{ch1-H-class}
	Let $S$ be a semigroup and $f$ be an idempotent element of $S$. Then the $\mathcal{H}$-class $H_f$ containing $f$ is a  subgroup of $S$. 
\end{corollary}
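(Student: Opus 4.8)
The plan is to prove directly from the definitions of Green's relations that $H_f$ is a subsemigroup of $S$ in which $f$ is a two-sided identity and every element is invertible, thereby exhibiting it as a subgroup. Write $a \in H_f$, so that $a \mathrel{\mathcal L} f$ and $a \mathrel{\mathcal R} f$; by definition $S^1 a = S^1 f$ and $aS^1 = fS^1$. First I would record that $f$ behaves as an identity on its own $\mathcal H$-class: from $a \in fS^1$ I may write $a = fx$, whence $fa = f^2 x = fx = a$; dually $a \in S^1 f$ gives $af = a$. Thus $fa = af = a$ for every $a \in H_f$, and since $f \in H_f$ (reflexivity of $\mathcal H$), the element $f$ is the candidate identity.

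Next I would show that $H_f$ is closed under the multiplication of $S$. Taking $a, b \in H_f$, left multiplication of $bS^1 = fS^1$ by $a$ gives $(ab)S^1 = (af)S^1 = aS^1 = fS^1$, so $ab \mathrel{\mathcal R} f$; dually, right multiplication of $S^1 a = S^1 f$ by $b$ gives $S^1(ab) = S^1(fb) = S^1 b = S^1 f$, using $fb = b$, so $ab \mathrel{\mathcal L} f$. Hence $ab \in H_f$, and $H_f$ is a subsemigroup of $S$ with identity $f$.

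The main obstacle is producing inverses that actually lie in $H_f$. For $a \in H_f$, the relations $aS^1 = fS^1$ and $S^1 a = S^1 f$ yield $p, q \in S^1$ with $ap = f$ and $qa = f$. I would then set $b_1 = fpf$ and $b_2 = fqf$; the computations $ab_1 = (af)pf = (ap)f = f^2 = f$ and $b_2 a = fq(fa) = f(qa) = f^2 = f$ (using $af = a$, $fa = a$, $f^2 = f$) establish one-sided inverses, while the sandwiching by $f$ forces $fb_i = b_i f = b_i$. The classical monoid trick then collapses these into a single two-sided inverse: $b_2 = b_2 f = b_2(ab_1) = (b_2 a)b_1 = f b_1 = b_1$, so $b := b_1 = b_2$ satisfies $ab = ba = f$.

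Finally I would check that $b \in H_f$, so the inverse lives in the right place. From $f = ba \in bS^1$ and $b = fb \in fS^1$ I get $bS^1 = fS^1$, i.e. $b \mathrel{\mathcal R} f$; symmetrically, $f = ab \in S^1 b$ and $b = bf \in S^1 f$ give $b \mathrel{\mathcal L} f$. Hence $b \in H_f$, and $H_f$ is a group with identity $f$. (Alternatively, one could invoke Green's Theorem—that an $\mathcal H$-class $H$ is a subgroup as soon as $H^2 \cap H \neq \emptyset$—applied to $f = f^2 \in H_f^2 \cap H_f$; but the argument above is self-contained and avoids Green's Lemma entirely.)
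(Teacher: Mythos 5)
Your proof is correct, and every step checks out: the identity property $fa=af=a$ on $H_f$ follows as you say from $a\in fS^1\cap S^1f$ and $f^2=f$; the closure computation is legitimate, since applying $x\mapsto ax$ to the set equality $bS^1=fS^1$ yields $(ab)S^1=(af)S^1=aS^1=fS^1$, and crucially this uses $af=a$ and $fb=b$, which is exactly why the argument works for the class of an \emph{idempotent} and does not falsely prove closure for arbitrary $\mathcal H$-classes; the sandwich elements $b_1=fpf$, $b_2=fqf$ and the monoid trick $b_2=b_2(ab_1)=(b_2a)b_1=b_1$ correctly produce a two-sided inverse, and your final verification that $b\in H_f$ (via $f=ba\in bS^1$, $b=fb\in fS^1$ and their duals) closes the loop. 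Note, however, that the paper gives no proof at all: it quotes this statement as Corollary 2.2.6 of Howie, where it is obtained as an immediate consequence of Green's Theorem (an $\mathcal H$-class $H$ with $H^2\cap H\neq\emptyset$ satisfies $H^2=H$ and is a subgroup), whose proof in turn rests on Green's Lemma and the Location Theorem. So your route is genuinely different: you trade the general machinery for a self-contained, purely computational argument exploiting the idempotent, which is more elementary and makes the role of $f$ transparent; the textbook route buys more, since Green's Theorem handles any $\mathcal H$-class meeting its own square and is the tool one needs anyway elsewhere in semigroup theory (e.g.\ in Proposition \ref{ch1-completely-regular}). You correctly flag this alternative yourself in your closing parenthesis, applying Green's Theorem to $f=f^2\in H_f^2\cap H_f$.
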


A semigroup $S$ without zero is said to be \emph{simple}\index{simple semigroup} if it has no proper ideals. A semigroup $S$ with zero is called \emph{0-simple}\index{0-simple semigroup} if
(i) $\{0\} \; {\rm and} \; S$   are   its  only  ideals  and  (ii)  $S^2 \neq \{0\}$. A nonzero idempotent in a semigroup $S$ is said to be \emph{primitive} if it is a minimal element in $E(S) \setminus \{0\}$ with respect to the partial order relation $\le$ on $E(S)$ defined by, for $a, b \in E(S), a \le b \; \Longleftrightarrow \;  ab = ba = a$. A semigroup $S$ is said to be \emph{completely $0$-simple} if it is $0$-simple  and has a primitive idempotent. Let $G$ be a group and let $I, \Lambda$ be non-empty sets. Let $P = (p_{\lambda i})$ be a $\Lambda \times I$ matrix with entries in $G^0 ( = G \cup \{0\})$, and suppose $P$ is \emph{regular}\index{regular matrix}, in the sense that no row or column of $P$ consists entirely of zeros. Let $S =  \mathfrak{M}^0[G, I, \Lambda, P] = (I \times G \times \Lambda) \cup \{0\}$, and define a composition on $S$ by 
\begin{equation}
	(i, a, \lambda)(j, b, \mu) =   \left\{ \begin{array}{ll}
		(i, ap_{\lambda j}b, \mu) & {\rm if} \; p_{\lambda j} \ne 0\\
		0 & {\rm if} \; p_{\lambda j} =  0
	\end{array} \right. 
\end{equation}
\[(i, a, \lambda)0 = 0(i, a, \lambda) = 0.\]

\begin{theorem}[{\cite[Theorem 3.2.3]{b.Howie}}]
	Let $S$ be a semigroup. Then $S$ is a completely $0$-simple semigroup if and only if $S \cong \mathfrak{M}^0[G, I, \Lambda, P]$ for some non-empty index sets $I$, $\Lambda$, regular matrix $P$ and a group $G$.
\end{theorem}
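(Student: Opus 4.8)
The plan is to prove the two implications separately: the forward-construction direction, that any Rees matrix semigroup $\mathfrak{M}^0[G,I,\Lambda,P]$ over a group $G$ with regular sandwich matrix $P$ is completely $0$-simple, and the converse structural direction, that an abstract completely $0$-simple semigroup can be coordinatised as such a Rees matrix semigroup. The first direction is a direct computation, while the second requires building coordinates out of Green's relations, which I expect to be the harder half.

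For the direction $S \cong \mathfrak{M}^0[G,I,\Lambda,P] \Rightarrow S$ completely $0$-simple, I would first check $0$-simplicity. Given any nonzero $(i,a,\lambda)$, regularity of $P$ supplies indices $\kappa \in \Lambda$ and $k \in I$ with $p_{\kappa i} \neq 0$ and $p_{\lambda k} \neq 0$; then for an arbitrary target $(j,b,\mu)$ I can solve $c\, p_{\kappa i}\, a\, p_{\lambda k} = b$ for $c \in G$ (using that $G$ is a group, so all the factors are invertible) and obtain $(j,b,\mu) = (j,c,\kappa)(i,a,\lambda)(k,1,\mu)$. Hence the principal ideal of any nonzero element is all of $S$, so $\{0\}$ and $S$ are the only ideals, and regularity also forces $S^2 \neq \{0\}$. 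Next I would identify the idempotents: a short computation shows $(i,a,\lambda)$ is idempotent iff $p_{\lambda i}\neq 0$ and $a = p_{\lambda i}^{-1}$. Finally, taking two such nonzero idempotents $f = (i,p_{\lambda i}^{-1},\lambda)$ and $e=(j,p_{\mu j}^{-1},\mu)$ with $e \le f$ (that is, $ef = fe = e$), the product formula forces $i=j$ and $\lambda = \mu$, so $e=f$; thus every nonzero idempotent is primitive and $S$ is completely $0$-simple.

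For the converse, I would coordinatise $S$ via its nonzero $\mathcal{D}$-class. First I would establish that in a completely $0$-simple $S$ the set $S\setminus\{0\}$ is a single $\mathcal{D}$-class (equivalently $\mathcal{D}=\mathcal{J}$ here), and picture it as an egg-box whose rows are the $\mathcal{R}$-classes, indexed by a set $I$, and whose columns are the $\mathcal{L}$-classes, indexed by $\Lambda$. Fixing a reference idempotent $f$, its $\mathcal{H}$-class $H_f$ is a group $G$ by Corollary~\ref{ch1-H-class}, and I would take this $G$ as the structure group. Using Green's lemmas, each $\mathcal{R}$- and each $\mathcal{L}$-class meeting $H_f$ contains an idempotent, and the left/right translations give bijections of the cells onto $H_f$. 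Choosing representatives $r_i$ (one per row) and $q_\lambda$ (one per column) so that each nonzero element is uniquely $r_i\, a\, q_\lambda$ with $a \in G$, I would define the sandwich matrix by $p_{\lambda i} = q_\lambda r_i \in G \cup \{0\}$. The claimed isomorphism is $(i,a,\lambda) \mapsto r_i\, a\, q_\lambda$, and Green's theorem on the location of products $xy$ (nonzero exactly when the relevant cell contains an idempotent) is what makes the multiplication match the Rees product; regularity of $P$ is precisely the statement that every row and column of the egg-box contains an idempotent.

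I expect the main obstacle to be the converse direction's bookkeeping: verifying that the representation $r_i a q_\lambda$ is both well defined and unique, and that the Rees multiplication formula drops out, requires the precise location-of-product lemmas relating $ab$, the $\mathcal{H}$-class $R_a \cap L_b$, and the presence of an idempotent in $L_a \cap R_b$. These Green-relation tools are not reproduced in the preliminaries above, so the real work is to assemble them into the coordinatisation; by comparison the forward direction is only the group-invertibility computation sketched in the previous paragraph.
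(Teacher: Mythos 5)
This theorem is quoted in the paper from \cite[Theorem 3.2.3]{b.Howie} without proof, so there is no in-paper argument to compare against; the relevant benchmark is the classical Rees proof, and your outline follows exactly that route. Your forward direction is essentially complete and correct: the computation $(j,b,\mu)=(j,c,\kappa)(i,a,\lambda)(k,1,\mu)$ with $c=b\,(p_{\kappa i}\,a\,p_{\lambda k})^{-1}$ shows every nonzero element generates $S$ as an ideal, the identification of the nonzero idempotents as the elements $(i,p_{\lambda i}^{-1},\lambda)$ with $p_{\lambda i}\neq 0$ is right, and the product computation forcing $i=j$, $\lambda=\mu$ is the standard primitivity check. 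One small omission: ``completely $0$-simple'' demands the \emph{existence} of a primitive idempotent, so you should note explicitly that regularity of $P$ supplies some $p_{\lambda i}\neq 0$ and hence a nonzero idempotent in the first place.

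The converse is, as you say, where the content lies, and as written it has one concrete gap beyond the deferred lemmas: you must anchor the representatives to the row and column of the reference idempotent $f$, choosing $r_i \in R_i \cap L_f$ and $q_\lambda \in R_f \cap L_\lambda$ (normalising, say, $r_1=q_1=f$). With arbitrary ``one per row, one per column'' choices the sandwich entries $p_{\lambda i}=q_\lambda r_i$ need not land in $H_f\cup\{0\}$, and the factorisation of a nonzero element as $r_i\,a\,q_\lambda$ with $a\in G$ is neither available nor unique; it is precisely the location theorem applied to $q_\lambda\in R_f$ and $r_i\in L_f$ that places $q_\lambda r_i$ in $(R_f\cap L_f)\cup\{0\}=G\cup\{0\}$. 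Likewise, your opening step that $S\setminus\{0\}$ is a single $\mathcal{D}$-class is not a consequence of $0$-simplicity alone ($\mathcal{D}=\mathcal{J}$ can fail in $0$-simple semigroups); it is exactly here that the primitive idempotent enters, through the standard lemma that a $0$-simple semigroup with a primitive idempotent is regular, which also justifies your claim that every row and column of the egg-box contains an idempotent, i.e.\ that the resulting $P$ is regular. None of these tools (Green's lemmas, the location theorem, $\mathcal{D}=\mathcal{J}$ for completely $0$-simple semigroups) appear in the paper's preliminaries, so a self-contained write-up must import or prove them; modulo that bookkeeping, your plan is the standard and correct one, matching the cited source.
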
 


A semigroup is said to be \emph{completely regular}\index{completely regular semigroup} if every element $a$ of $S$ lies in a subgroup of $S$. Further, we have the following characterization of completely regular semigroup.

\begin{proposition}[{{\cite[Proposition 4.1.1]{b.Howie}}}]\label{ch1-completely-regular}
A semigroup $S$ is completely regular if and only if every $\mathcal{H}$-class in $S$ is a group.
\end{proposition}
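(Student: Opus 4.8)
The plan is to prove the two implications separately, with the reverse direction being immediate and the forward direction requiring a short computation with Green's relations together with the cited Corollary \ref{ch1-H-class}.

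For the easy direction ($\Leftarrow$), I would assume every $\mathcal{H}$-class is a group. Given any $a \in S$, its $\mathcal{H}$-class $H_a$ is by hypothesis a subgroup of $S$ containing $a$, so $a$ lies in a subgroup of $S$. As $a$ was arbitrary, $S$ is completely regular directly from the definition.

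For the forward direction ($\Rightarrow$), suppose $S$ is completely regular and fix an arbitrary $\mathcal{H}$-class $H_a$. By definition $a$ lies in some subgroup $G$ of $S$; let $e$ denote the identity of $G$, which is an idempotent of $S$. The main step is to show that $G$ is confined to a single $\mathcal{H}$-class, namely $H_e$. I would do this by verifying that each $g \in G$ satisfies $g\,\mathcal{H}\,e$: writing $g^{-1}$ for the inverse of $g$ in $G$, the identities $e = g^{-1}g$ and $g = ge$ give $S^1 e \subseteq S^1 g$ and $S^1 g \subseteq S^1 e$, so $g\,\mathcal{L}\,e$; the symmetric computation using $e = gg^{-1}$ and $g = eg$ gives $g\,\mathcal{R}\,e$, whence $g\,\mathcal{H}\,e$. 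In particular $a\,\mathcal{H}\,e$, so $H_a = H_e$.

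Finally, since $e$ is idempotent, Corollary \ref{ch1-H-class} shows that $H_e$ is a subgroup of $S$. Because $H_a = H_e$, the $\mathcal{H}$-class $H_a$ is a group, and as every $\mathcal{H}$-class is the $\mathcal{H}$-class of some element, every $\mathcal{H}$-class is a group. The only delicate point is the verification that a subgroup lies inside one $\mathcal{H}$-class; everything else follows formally from the definitions of the Green relations and the cited corollary.
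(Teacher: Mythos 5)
Your proof is correct and complete: the computations $e=g^{-1}g$, $g=ge$, $e=gg^{-1}$, $g=eg$ do yield $S^1g=S^1e$ and $gS^1=eS^1$, so every subgroup of $S$ is contained in the $\mathcal{H}$-class of its identity, and Corollary \ref{ch1-H-class} then closes the forward direction, while the converse is immediate as you say. Note that the paper gives no proof of this statement---it is quoted from Howie (Proposition 4.1.1)---and your argument is essentially the standard textbook one, so you have simply supplied the omitted details, and done so correctly.
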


A semigroup $S$ is said to be of \emph{bounded exponent}\index{bounded exponent} if there exists a positive integer $n$ such that for all $x \in S$, $x^n = f$ for some $f \in E(S)$. If $S$ is of bounded exponent then the \emph{exponent}\index{exponent} of $S$ is the least $n$ such that for each $x \in S$,  $x^n = f$ for some $f \in E(S)$. Note that every finite semigroup is of bounded exponent. We often use the following fundamental properties of semigroups without referring to it  explicitly. Let $S$ be a semigroup of bounded exponent. For $f \in E(S)$, we define
\begin{align}\label{ch1-eq-1}
	S_f = \{ a \in S \; : \; a^m= f \; \text{for some} \; m \in \mathbb N\}.
\end{align}

The following remark is useful in the sequel.

\begin{remark}\label{ch1-re-classification-compoenent} Let $S$ be a semigroup of bounded exponent. Then $S = \underset{f \in E(S)}{\bigcup S_f}$ and for distinct $f, \;  f' \in E(S)$, we have $S_f \bigcap S_{f'} = \emptyset$.
\end{remark}

\section{Graph invariants of $\Gamma(S)$}
In this section,  first we describe the structure of $\Gamma(S)$. Then we investigate graph invariants viz. dominance number, completeness etc. Also, we classify the semigroup $S$ such that $\Gamma(S)$ is bipartite, acyclic, tree and regular, respectively. Finally, we characterize the isolated vertices of $\Gamma(S)$. For $x \in S$ and $m,n \in \mathbb N$, we define 
\[S(x,m,n) = \{y \in S : \; x^m = y^n \}\]
and we write  $C(x) = \bigcup\limits_{m,n \in  \mathbb N}S(x,m,n)$. The
following proposition describes the structure of $\Gamma(S)$.
\begin{proposition}\label{ch2-component-infinite}
	The set $C(x)$ is a connected component of $\Gamma(S)$. Moreover, the components of the graph $\Gamma(S)$ are precisely $\{C(x) \; | \; x \in S \}$.
\end{proposition}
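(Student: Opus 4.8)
The plan is to realize the sets $C(x)$ as the classes of an equivalence relation on $S$ and then verify the two defining features of a connected component: internal connectivity and the absence of edges to the rest of the graph. First I would check that the relation $x \approx y$ given by ``$y \in C(x)$'' — equivalently, $x^m = y^n$ for some $m, n \in \mathbb N$ — is an equivalence relation. Reflexivity and symmetry are immediate from the definition of $S(x,m,n)$. For transitivity, given $x^m = y^n$ and $y^p = z^q$, raising the first equation to the power $p$ and the second to the power $n$ gives $x^{mp} = y^{np} = y^{pn} = z^{qn}$, so $z \in C(x)$. Hence the sets $C(x)$ partition $S$; in particular $y \in C(x)$ forces $C(x) = C(y)$.

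The second ingredient is the translation between adjacency and $\approx$. If $x \sim y$, then $\langle x, y \rangle = \langle z \rangle$ is monogenic, so $x = z^i$ and $y = z^j$ for some $i, j \in \mathbb N$, whence $x^j = z^{ij} = y^i$ and therefore $y \in S(x,j,i) \subseteq C(x)$. Thus every edge of $\Gamma(S)$ joins two vertices lying in a common class $C(x)$. Combined with the partition property, this already shows that no edge runs between distinct classes, so each $C(x)$ is a union of connected components and, once shown to be connected, can have no larger connected set containing it.

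It remains to prove that each $C(x)$ is itself connected, and this step carries the real content, since membership $y \in C(x)$ does \emph{not} by itself make $x$ and $y$ adjacent. The idea is to route through a common power. If $y \in C(x)$, choose $m, n$ with $w := x^m = y^n$. Because $w = y^n \in \langle y \rangle$ we have $\langle y, w \rangle = \langle y \rangle$, which is monogenic, so $y \sim w$ whenever $y \ne w$; likewise $w = x^m \in \langle x \rangle$ gives $\langle x, w \rangle = \langle x \rangle$ and $x \sim w$. Hence $y \sim w \sim x$ is a walk of length at most two, with all its vertices in $C(x)$, joining $y$ to $x$; the degenerate cases $y = w$ or $x = w$ only shorten the walk. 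Consequently every element of $C(x)$ is connected to $x$, so the induced subgraph on $C(x)$ is connected.

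Collecting the three facts — the sets $C(x)$ partition $S$, no edge crosses between distinct classes, and each class induces a connected subgraph — yields that the $C(x)$ are exactly the connected components of $\Gamma(S)$. I expect the main obstacle to be precisely the failure of the equivalence of ``$x^m = y^n$'' and ``$x \sim y$''; the resolution is the observation that the common power $w$ is adjacent to both endpoints, which is what supplies connectivity within a class while the monogenic-generation argument keeps edges from ever leaving it.
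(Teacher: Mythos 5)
Your proposal is correct and follows essentially the same route as the paper: connectivity within $C(x)$ comes from routing through a common power (your $w = x^m = y^n$, the paper's $x^{mp}$), and closure under adjacency comes from writing both endpoints as powers of a single monogenic generator, exactly as in the paper's $y = t^{\alpha}$, $z = t^{\beta}$, $z^{n\alpha} = x^{m\beta}$ computation. Your explicit equivalence-relation packaging and attention to the degenerate cases $y = w$ or $x = w$ are minor organizational refinements of the same argument, not a different method.
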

\begin{proof}
	Let $y, z \in C(x)$. Then $y \in S(x,m,n)$ and $z \in S(x,p,q)$.  It follows that $x^m = y^n$ and $x^p = z^q$. Note that $y \sim x^{mp} \sim z$. Thus, $C(x)$ is connected in $\Gamma(S)$. Now suppose that the element $z$ of $S$ is adjacent to a vertex $y$ in $C(x)$. Since $y \sim z$ implies $\langle y, z \rangle = \langle t \rangle$ for some $t \in S$. For $y \in  C(x)$, we have $x^m = y^n$ for some $m, n \in \mathbb N$.  It follows that $y= t^{\alpha}, \; z = t^{\beta}$ and  $z^{n\alpha} = x^{m\beta}$. Thus, $z \in  C(x)$. Hence,  $C(x)$ is a connected component of $\Gamma(S)$.
\end{proof}

In view of Equation (\ref{ch1-eq-1}), we have the following corollary.

\begin{corollary}\label{ch2-component-finite}
	For $f \in E(S)$, we have $C(f) = S_f$. Moreover, if $x \in S$ such that $o(x)$ is finite then $x \in S_f$ for some $f \in E(S)$.
\end{corollary}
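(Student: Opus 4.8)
The plan is to establish the two assertions separately, both of which reduce to the single fact that powers of an idempotent collapse, i.e.\ $f^m = f$ for every $m \in \mathbb{N}$ whenever $f \in E(S)$.

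For the equality $C(f) = S_f$ I would prove two inclusions. For $S_f \subseteq C(f)$, take $a \in S_f$, so that $a^k = f$ for some $k \in \mathbb{N}$; writing $f = f^1$ shows $a \in S(f,1,k)$, and hence $a \in C(f)$ by the definition $C(f) = \bigcup_{m,n} S(f,m,n)$. For the reverse inclusion $C(f) \subseteq S_f$, take $a \in C(f)$, so that $a \in S(f,m,n)$ for some $m,n \in \mathbb{N}$, which by definition means $f^m = a^n$. Since $f$ is idempotent we have $f^m = f$, so $a^n = f$ and therefore $a \in S_f$. This settles the first part, and I note that it requires no hypothesis on $S$ beyond $f \in E(S)$.

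For the \emph{moreover} statement I would invoke the structure of finite monogenic semigroups recalled in Section~2. If $o(x)$ is finite, then $\langle x \rangle$ is a finite monogenic semigroup $M(m_x, r_x)$, and as shown in the preliminaries there is an integer $g$ with $0 \le g \le r_x - 1$ and $m_x + g \equiv 0 \pmod{r_x}$ such that $f := x^{m_x + g}$ is idempotent (indeed it is the identity of the cyclic group $\mathcal{K}_x$). Thus $f \in E(S)$ and $x^{m_x + g} = f$, which is precisely the condition $x \in S_f$.

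I do not anticipate any genuine obstacle: the corollary is essentially an unwinding of the definitions of $C(f)$ and $S_f$, with the only substantive input being the existence of an idempotent power of the generator in a finite monogenic semigroup, already established in Section~2. The one point worth stating carefully is that the collapse $f^m = f$ is exactly what forces the a priori weaker membership in $C(f)$ (reachability from $f$ in $\Gamma(S)$, cf.\ Proposition~\ref{ch2-component-infinite}) to coincide with the concrete arithmetic condition $a^n = f$ defining $S_f$.
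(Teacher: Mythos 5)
Your proposal is correct and is essentially the argument the paper has in mind: the paper states this corollary without proof, presenting it as an immediate consequence of the definition of $S_f$ in Equation~(\ref{ch1-eq-1}), and your two-inclusion unwinding via $f^m = f$ for $f \in E(S)$, together with the idempotent power $x^{m_x+g}$ from the preliminaries for the ``moreover'' part, simply makes that routine verification explicit.
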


Let $S$ be a semigroup of exponent $n$. Each connected component of $\Gamma(S)$ is of the form $S_f$ for some $f \in E(S)$ (see Remark \ref{ch1-re-classification-compoenent} and Corollary \ref{ch2-component-finite}). Since $S_f$ is a connected component and $f \sim x$ for all $x \in S_f \setminus \{f\}$, therefore $E(S)$ is a dominating set of $\Gamma(S)$. Consequently, we have the following lemma.

\begin{lemma}
	Let $S$ be a semigroup of exponent $n$. Then the dominance number of $\Gamma(S)$ is equals to the number of idempotents in $S$.
\end{lemma}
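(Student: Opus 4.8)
The plan is to sandwich the dominance number between $|E(S)|$ from above and below, with $|E(S)|$ the number of idempotents. The upper bound is already at hand: the discussion immediately preceding the statement shows that $E(S)$ is a dominating set of $\Gamma(S)$, because every vertex $x$ lies in a unique component $S_f$ (by Corollary \ref{ch2-component-finite} and Remark \ref{ch1-re-classification-compoenent}) and is either equal to $f$ or satisfies $f \sim x$. Hence the dominance number is at most $|E(S)|$, and all the remaining work is to establish the matching lower bound.

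For the reverse inequality, I would use the purely graph-theoretic fact that any dominating set must meet every connected component of the graph. Concretely, adjacency in $\Gamma(S)$ occurs only between vertices lying in a common component, so if a dominating set $D$ were disjoint from some component $C$, then an arbitrary $v \in C$ would neither belong to $D$ nor be adjacent to any element of $D$ (all its neighbours lie in $C$), contradicting that $D$ dominates $\Gamma(S)$. Therefore $|D|$ is at least the number of connected components of $\Gamma(S)$, and this holds for every dominating set, hence for a minimum one.

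It then remains to count the components. By Corollary \ref{ch2-component-finite} the component containing an idempotent $f$ is $C(f) = S_f$, and by Remark \ref{ch1-re-classification-compoenent} the sets $\{S_f : f \in E(S)\}$ partition $S$ into nonempty pairwise disjoint pieces; since $f \in S_f$ and these pieces are disjoint, the assignment $f \mapsto S_f$ is injective, so distinct idempotents yield distinct components and the number of components is exactly $|E(S)|$. Combining the lower bound from the previous paragraph with the upper bound gives that the dominance number equals $|E(S)|$. The argument is essentially bookkeeping and I do not anticipate a genuine obstacle; the only point worth stating explicitly is the injectivity of $f \mapsto S_f$, which is immediate from disjointness of the $S_f$.
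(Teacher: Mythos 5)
Your proof is correct and follows essentially the same route as the paper, whose ``proof'' is just the discussion preceding the lemma: the components of $\Gamma(S)$ are exactly the sets $S_f$ for $f \in E(S)$, and $E(S)$ dominates since $f \sim x$ for every $x \in S_f \setminus \{f\}$. Your only addition is to spell out the matching lower bound (every dominating set must meet each of the $|E(S)|$ pairwise disjoint components), a step the paper leaves implicit; this is sound and completes the argument rigorously.
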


Let $x \in S$ such that $x \ne x^2$. Then $x \sim x^2$ in $\Gamma(S)$. Consequently, we have the following corollary.

\begin{corollary}
	For any semigroup $S$, $\Gamma(S)$ is a null graph if and only if $S$ is a band. 
\end{corollary}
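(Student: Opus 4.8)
The plan is to prove the two implications separately, relying on the displayed observation immediately preceding the statement, namely that $x \sim x^2$ whenever $x \neq x^2$, together with the fact that a monogenic semigroup contains at most one idempotent.

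First I would establish that a null $\Gamma(S)$ forces $S$ to be a band, arguing by contraposition. Suppose $S$ is not a band; then there exists $x \in S$ with $x \neq x^2$. Since $\langle x, x^2 \rangle = \langle x \rangle$ is monogenic, the preceding observation yields $x \sim x^2$, so $\Gamma(S)$ possesses an edge and is therefore not a null graph. This direction is thus immediate from the remark recorded just before the statement.

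For the converse, assume $S$ is a band and suppose, for contradiction, that $\Gamma(S)$ has an edge, say $x \sim y$ with $x \neq y$. By the definition of adjacency, $\langle x, y \rangle = \langle t \rangle$ is monogenic for some $t \in S$. Now $x$ and $y$ are distinct elements of the band $S$, hence distinct idempotents, and both lie in $\langle t \rangle$. The key point is that a monogenic semigroup carries at most one idempotent: if $\langle t \rangle$ is infinite it has none, while if it is finite, say $\langle t \rangle = M(m, r)$, the index--period analysis recalled in Section~2 shows that $t^{m+g}$ (with $0 \le g \le r-1$ and $m + g \equiv 0 \,(\mathrm{mod}\; r)$) is its only idempotent. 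Consequently $x = y$, contradicting $x \neq y$. Hence no edge can exist and $\Gamma(S)$ is null.

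I do not expect a genuine obstacle here; the single point requiring care is the uniqueness of the idempotent in a monogenic subsemigroup, which is exactly what prevents two distinct idempotents from being adjacent. Since this fact is already implicit in the structure theory developed in the preliminaries, the proof reduces to a short application of that analysis combined with the displayed observation $x \sim x^2$.
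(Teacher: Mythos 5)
Your proposal is correct. The forward direction (null graph implies band, via the contrapositive and the observation $x \sim x^2$ when $x \neq x^2$) is exactly the paper's justification --- indeed the paper states the corollary as an immediate consequence of that displayed remark and writes out nothing further. For the converse your argument via the uniqueness of the idempotent in a monogenic semigroup is sound (an infinite monogenic semigroup has no idempotent, and $M(m,r)$ has exactly one, namely $t^{m+g}$), but it is heavier than necessary: since $S$ is a band, the generator $t$ of $\langle x, y \rangle = \langle t \rangle$ is itself idempotent, so $\langle t \rangle = \{t, t^2, t^3, \ldots\} = \{t\}$, forcing $x = y = t$ at once and bypassing both the index--period analysis and the infinite case. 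So the two routes agree in substance; yours buys a reusable fact (at most one idempotent in any monogenic semigroup), while the direct observation buys brevity, which is presumably why the paper felt no proof was needed.
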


\begin{lemma}\label{ch1-cyclic implies monogenic}
	Every finite cyclic subgroup of a semigroup $S$ is a monogenic subsemigroup of $S$.
\end{lemma}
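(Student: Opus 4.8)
The plan is to show directly that a finite cyclic subgroup is generated, as a semigroup, by the very element that generates it as a group. Let $H$ be a finite cyclic subgroup of $S$, and let $a \in S$ be a generator of $H$ as a group, so that $H = \{a^k : k \in \mathbb{Z}\}$ and $|H| = r$ for some $r \in \mathbb{N}$. Write $e$ for the identity element of the group $H$; note that $e$ need not be the identity of $S$, but it is an idempotent of $S$ lying in $H$.

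First I would record that $a$ has index $1$ and period $r$ when regarded as an element of $S$. Indeed, since $H$ is a group of order $r$ with identity $e$, the elements $a, a^2, \ldots, a^{r-1}, a^r = e$ are the $r$ distinct members of $H$, and $a^{r+1} = a \cdot a^r = a \cdot e = a = a^1$, while no collision occurs among $a, a^2, \ldots, a^r$. Thus, in the notation of the preliminaries, the monogenic subsemigroup $\langle a \rangle = \{a^k : k \in \mathbb{N}\}$ is exactly $M(1, r)$, in agreement with the remark there that every element of a (finite) group has index one.

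The key step is to observe that the positive powers of $a$ already exhaust $H$. Because $a^r = e$, we get $a^{k+r} = a^k$ for every $k \in \mathbb{N}$, and in particular $a^{-1} = a^{r-1}$, $e = a^r$, and more generally every integer power $a^k$ coincides with a positive power $a^{k'}$ with $k' \equiv k \pmod r$ and $1 \le k' \le r$. Hence
\[
\{a^k : k \in \mathbb{N}\} = \{a, a^2, \ldots, a^{r-1}, a^r\} = \{a, a^2, \ldots, a^{r-1}, e\} = \{a^k : k \in \mathbb{Z}\} = H,
\]
so $H = \langle a \rangle$ is a monogenic subsemigroup of $S$, which is what we want.

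I do not anticipate a genuine obstacle here: the proof is short, and the only points requiring a little care are to keep the group identity $e$ of $H$ distinct from any global identity of $S$, and to verify that the negative (and zero) exponents collapse onto positive ones — which is immediate from finiteness via the relation $a^r = e$ together with cancellation inside the subgroup $H$.
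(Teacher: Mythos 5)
Your proposal is correct and follows essentially the same route as the paper's proof: both use finiteness to obtain $a^r = e$ for a group generator $a$ of $H$, and then observe that negative and zero exponents collapse onto positive powers (the paper writes this as $a^{-k} = a^{k(r-1)}$, you as reduction modulo $r$), so that $H = \{a^k : k \in \mathbb{N}\}$ is monogenic. Your extra remarks on the index and period of $a$ are accurate but not needed for the argument.
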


\begin{proof}
	Let $H$ be a cyclic subgroup of $S$. Then $H = \langle a \rangle$ for some $a \in S$. Since $H$ is finite so that $o(a) = n$ for some $n \in \mathbb N$. Thus $a^n = e$, where $e$ is the identity element of $H$. Consequently, $a^{-1} = a^{n-1}$. Now for any non-negative integer $k$, we get $a^{-k} = a^{k(n-1)}$. Thus, every element of $H$ is a positive power of $a$. Hence, $H$ is a monogenic subsemigroup of $S$.
\end{proof}

\begin{theorem}\label{gamma-complete}
Let $S$ be a semigroup. Then $\Gamma(S)$ is complete\index{complete} if and only if one of the following holds:
\begin{enumerate}[\rm (i)]
		\item $S = \langle a  : a^{1 + r} = a \rangle$
		\item $ S =  \langle a  : a^{2+r} = a^2 \rangle$
		\item $ S = \langle a  : a^{3+r} = a^3 \rangle$, where $r$ is odd.
	\end{enumerate}
\end{theorem}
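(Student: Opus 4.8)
The plan is to reduce both directions to an adjacency computation inside a monogenic semigroup. For the necessity direction I would first note that completeness makes any two elements of $S$ generate a monogenic---hence commutative---subsemigroup, so $S$ is commutative, and that a short induction (repeatedly using completeness to fold one more element into a single cyclic subsemigroup) shows every finite subset of $S$ lies in some $\langle q\rangle$. Applied to all of $S$ this gives $S=\langle q\rangle$, so $S=M(m,r)=\langle a:a^m=a^{m+r}\rangle$ is monogenic; the infinite-order case is excluded because an infinite $\langle x\rangle$ contains the non-monogenic subsemigroup $\langle x^2,x^3\rangle=\{x^k:k\ge 2\}$, which would violate completeness. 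It then remains only to decide which $M(m,r)$ are complete.

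Recall that $S=M(m,r)$ has elements $a,a^2,\dots,a^{m+r-1}$, its \emph{tail} $a,\dots,a^{m-1}$ consisting of the non-group elements and $\mathcal K_a=\{a^m,\dots,a^{m+r-1}\}$ being cyclic of order $r$. The key rigidity I would exploit is that a tail element $a^i$ (with $i<m$) can only be written as $a^{kt}$ when $kt=i$, since any higher exponent wraps into $\mathcal K_a$ and so cannot equal $a^i$; hence if $\langle a^i,a^j\rangle=\langle a^t\rangle$ then $t\mid i$. Using this I would show $m\le 3$: for $m\ge 4$ both $a^2,a^3$ are tail elements and $\langle a^2,a^3\rangle=\{a^k:k\ge2\}=S\setminus\{a\}$, forcing $t\mid 2$ and $t\mid 3$, i.e. $t=1$ and $a\in\langle a^2,a^3\rangle$, which is impossible since every exponent $2\alpha+3\beta\ge 2$; thus $a^2\nsim a^3$ and $\Gamma(S)$ is not complete. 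For $m=3$ the only pairs needing attention are $(a^2,a^j)$ with $j\ge 3$ (pairs inside the group $\mathcal K_a$ generate a cyclic subgroup, monogenic by Lemma~\ref{ch1-cyclic implies monogenic}, and any pair involving $a$ generates $\langle a\rangle=S$). Here $t\mid 2$ forces $t\in\{1,2\}$, $t=1$ is impossible as above, so everything reduces to deciding when $a^j\in\langle a^2\rangle$. Identifying $\mathcal K_a\cong\mathbb Z_r$ via $a^k\mapsto k\bmod r$, Remark~\ref{ch1-Ka^i} gives that the group part $\mathcal K_{a^2}=\langle a^2\rangle\cap\mathcal K_a$ is the subgroup $\langle\gcd(2,r)\rangle$, so $a^j\in\langle a^2\rangle$ iff $j\bmod r\in\langle\gcd(2,r)\rangle$; when $r$ is even this fails for $j=3$, giving $a^2\nsim a^3$, whereas when $r$ is odd one has $\langle 2\rangle=\mathbb Z_r$ and it always holds.

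For sufficiency I would run the same computation forwards: in $M(1,r)$ (a cyclic group) and $M(2,r)$ every pair lies either inside $\mathcal K_a$ or involves $a$, and both cases are monogenic, while in $M(3,r)$ with $r$ odd the argument above gives $\langle a^2,a^j\rangle=\langle a^2\rangle$ for every $j\ge 3$, so all pairs are adjacent and $\Gamma(S)$ is complete. I expect the main obstacle to be the $m=3$ analysis: one must argue carefully that no \emph{alternative} single generator $a^t$ can rescue monogenicity of $\langle a^2,a^j\rangle$ when $r$ is even (this is precisely where the tail rigidity $t\mid i$ is needed), and one must translate membership $a^j\in\langle a^2\rangle$ correctly into the subgroup condition in $\mathcal K_a\cong\mathbb Z_r$, tracking the exponent offset coming from the index $m=3$. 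The other point requiring care is the opening reduction, namely establishing that completeness forces $S$ to be a finite monogenic semigroup before the case analysis can even begin.
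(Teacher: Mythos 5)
Your core case analysis is correct and follows essentially the same route as the paper: exclude infinite-order elements because $\langle x^2,x^3\rangle$ is not monogenic, reduce to $S=M(m,r)$, kill $m\ge 4$ and $m=3$ with $r$ even by showing $a^2\nsim a^3$, and verify the three families directly. Your packaging is in fact tidier at two points. The ``tail rigidity'' observation (if $\langle a^i,a^j\rangle=\langle a^t\rangle$ with $i<m$ then $a^i=a^{tk}$ forces $tk=i$, hence $t\mid i$) consolidates the paper's repeated ad hoc exponent chases ($t=1$ forces $m=1$; $t\in\{2,3\}$ forces containments; $t>3$ forces $m\le 2$) into one lemma, and your $m\ge 4$ argument via $t\mid 2$ and $t\mid 3$, so $t=1$, so $a\in\langle a^2,a^3\rangle=\{a^k:k\ge 2\}$, is a clean contradiction. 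Likewise the identification $\mathcal{K}_a\cong\mathbb{Z}_r$ via $a^k\mapsto k\bmod r$ with $\mathcal{K}_{a^2}=\langle\gcd(2,r)\rangle$ settles both parities of $r$ at once, where the paper argues the even and odd cases separately; I checked that this map is indeed an isomorphism and that your membership criterion $a^j\in\langle a^2\rangle\iff j\bmod r\in\langle\gcd(2,r)\rangle$ (for $j\ge m$) is right.

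The genuine gap is the opening reduction, which you flagged but did not close. From ``every finite subset of $S$ lies in some $\langle q\rangle$'' you cannot conclude $S=\langle q\rangle$ unless $S$ is already known to be finite: a directed union of monogenic subsemigroups need not be monogenic. This is not a technicality. In the Pr\"ufer group $\mathbb{Z}(p^\infty)$ every element has finite order and any two elements lie in a finite cyclic subgroup, which is a monogenic subsemigroup by Lemma \ref{ch1-cyclic implies monogenic}; hence $\Gamma(\mathbb{Z}(p^\infty))$ is complete, yet the group is not monogenic. So your induction genuinely cannot be ``applied to all of $S$'', and the necessity direction requires a finiteness hypothesis. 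To be fair, the paper's own proof has exactly the same lacuna: it asserts that each $x\in S$ lies in $\langle a\rangle$ for some $a\in\mathcal{M}$, which is precisely what fails for $\mathbb{Z}(p^\infty)$ (no maximal monogenic subsemigroups exist there), and the chain argument of Lemma \ref{ch1-exponent}(ii) that would produce such maximal elements presupposes bounded exponent. So to make your proof (and the theorem) airtight you should either assume $S$ is of bounded exponent --- after which Lemma \ref{ch1-exponent}(ii) yields $\mathcal{M}$-elements covering $S$, distinct maximal monogenic subsemigroups have non-adjacent generators, and $S=\langle a\rangle$ follows as in the paper --- or restrict to finite $S$, where your folding induction is valid as stated.
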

\begin{proof}
	
	Suppose that $\Gamma(S)$ is complete. First we claim that $S$ is of bounded exponent. Note that $o(a)$ is finite for all $a \in S$. Otherwise, there exists $a \in S$ such that $o(a)$ in infinite. Then $a^2 \nsim a^3$; a contradiction of the fact that $\Gamma(S)$ is complete. For each $x \in S$, there exists $a \in \mathcal M$ such that $x \in \langle a \rangle$. If  there exist  $x, y \in \mathcal M$ such that $\langle x \rangle  \ne \langle y \rangle$. Observe that $x \nsim y$; a contradiction. Thus, $S$ is a semigroup of exponent  $n$ for some $n \in \mathbb N$ and $S = \langle a \rangle$ for some $a \in \mathcal M$.
	Consider $S = M(m,r)$ for some $m, r \in \mathbb{N}$. On contrary, suppose that $S$ is not of the form given in (i), (ii)  and  (iii).
	Then either $S = M(3, r)$ such that $r$ is even or $S = M(m, r)$, where $m\geq 4$.
	
	If $S = M(3, r)$ such that $r$ is even, then clearly $3 + r - 1$ and $3 + r + 1$ are even. Since $a^{3+r} = a^3$ implies $a^{4+r} = a^4$ so that $\langle a^2 \rangle = \{a^2, a^4, \ldots, a^{2+r}\}$. Note that $a^3 \notin \langle a^2 \rangle$. If $a^2 \in \langle a^3 \rangle$, then $a^2 = a^{3k}$ for some $k\in \mathbb N$. Thus $m \leq 2$; a contradiction for $m = 3$. Consequently, $a^2 \notin \langle a^3 \rangle$.
	Let if possible $\langle a^2, a^3 \rangle = \langle a^t \rangle $ for some $a^t \in S$. We now show that no such $t \in \mathbb{N}$ exists. If $t = 1$, then $\langle a^2, a^3 \rangle  = \langle a \rangle$ so that
	$a = a^l$, where $l \ge 2$. Thus, $m = 1$; a contradiction. If $t \in \{2, 3 \}$, then either $a^2 \in \langle a^3 \rangle$ or $a^3 \in \langle a^2 \rangle$; again a contradiction. Thus, we have $\langle a^2,a^3 \rangle = \langle a^t \rangle$ such that $t > 3$. Since $a^2 \in \langle a^2,a^3 \rangle = \langle a^t \rangle$ so that $a^2 = (a^t)^k$ for some $k \in \mathbb N$. Consequently, $ m \leq 2$; a contradiction. Thus, $\langle a^2, a^3 \rangle$ is not a monogenic subsemigroup of $S$ implies $a^2 $ is not adjacent  to $a^3$ in $\Gamma(S)$ so that $\Gamma(S)$ is not complete which is a contradiction. \\
	We may now suppose $S = M(m, r)$, where $m \ge 4$. In this case, first note that $a, a^2, a^3, a^4$  all are distinct elements of $S$. Now, we show that $\langle a^2, a^3 \rangle$ is not a monogenic subsemigroup of $S$ so that $a^2$ and $a^3$ are not adjacent in $\Gamma(S)$, which is a contradiction of the fact $\Gamma(S)$ is complete. If possible, let  $\langle a^2, a^3 \rangle = \langle a^i \rangle$ for some $a^i \in S$. If $i = 1$, then $a \in \langle a^2, a^3 \rangle$. Thus, $a = a^t$, where $t \ge 5$ so that $m = 1$;  a contradiction. For $i = 2$, note that $a^3 \in \langle a^2 \rangle$ gives $a^3 = a^{2k}$ for some $k \ge 3$. Thus $ m \leq 3$; a contradiction. If $i \ge 3$, then $a^2 \in \langle a^i \rangle$, which implies that $a^2 = a^{ik}$ for some $k \in \mathbb N$. Thus $ m \leq 2$; again a contradiction.
	
	Conversely, suppose that $S$ is one of the form given in (i), (ii)  and  (iii). Thus, we have the following cases.
	
	\textbf{Case 1:} $S = M(1, r)$ i.e. $S = \{a, a^2, \ldots, a^r\}$. Since $S$ is a cyclic group, for any two distinct $x, y \in S$, note that $\langle x, y \rangle$ is a cyclic subgroup of $S$.
	Consequently, $\langle x, y \rangle$ is a monogenic subsemigroup of $S$ (cf. Lemma \ref{ch1-cyclic implies monogenic}) so that $\langle x, y \rangle  = \langle z \rangle $ for some $z \in S$. Thus,  $x \sim y$ in $\Gamma (S)$.
	Hence, $\Gamma(S)$ is complete.
	
	\textbf{Case 2:}  $S = M(2, r)$ i.e. $S = \{a, a^2, \ldots, a^{r+1}\}$ with $a^{2+r} = a^2$. Clearly, $\mathcal{K}_a = \{a^2, a^3, \ldots, a^{r+1}\}$. For $2 \leq i \leq r+1$,
	we have $a^i \in \langle a \rangle$ so that $\langle a^i, a \rangle = \langle a \rangle$. Thus $a \sim a^i$ in $\Gamma(S)$. Since $\mathcal{K}_a$ is a cyclic subgroup of $S$ and for any $a^i, a^j \in \mathcal{K}_a$,
	the subsemigroup $\langle a^i, a^j \rangle$ is monogenic in $S$ so that $a^i \sim a^j$ in $\Gamma(S)$. Thus, $\Gamma(S)$ is complete.
	
	\textbf{Case 3:} $S =M(3, r)$ such that $r$ is odd. Clearly,  $S = \{a, a^2, a^3, \ldots, a^{2+r} \}$ with $a^{3+r} = a^3 $.
	By the similar argument used in \textbf{Case 2}, note that for $ 2 \leq i \leq 2+r$, we have $a \sim a^i$  in $\Gamma(S)$. Since $3 + r$ is even implies $3 + r = 2k$ for some $k \in \mathbb{N}$.
	Thus, $a^3 = a^{3+r} = a^{2k} = (a^2)^k$ so that $a^3 \in \langle a^2 \rangle$. Consequently, $a^5, a^7, \cdots, a^{2+r} \in \langle a^2 \rangle$.
	For $i > 2$, note that $\langle a^2, a^i \rangle = \langle a^2 \rangle $ and it gives $a^2 \sim a^i$ in $\Gamma(S)$. Now, $\mathcal{K}_a = \{a^3, a^4, \ldots, a^{2+r} \}$ is a cyclic subgroup of $S$ and for any $a^i, a^j \in \mathcal{K}_a$ note that $\langle a^i, a^j \rangle$ is a monogenic subsemigroup of $S$. Thus $a^i \sim a^j$ in $\Gamma(S)$. Hence, $\Gamma(S)$ is complete.
\end{proof}

\begin{corollary}{\rm\cite[Theorem 9]{a.Ma2013}}\label{ch2-Gamma(G)-complete}
	Let $G$ be a finite group. Then the cyclic graph $\Gamma(G)$ is complete if and only if $G$ is a cyclic group.
\end{corollary}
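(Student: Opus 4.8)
The plan is to deduce this corollary directly from Theorem \ref{gamma-complete} by intersecting its trichotomy with the class of finite groups. The one auxiliary fact I need is already recorded in the preliminaries: every element of a finite group has index $1$. Reformulated for our purposes, this says that a finite monogenic semigroup $M(m,r)$ is a group if and only if $m=1$, and in that case it is exactly the cyclic group of order $r$ (indeed, $M(1,r)=\{a,a^2,\ldots,a^r\}$ with $a^{1+r}=a$, so $a^r$ is an identity and $a^{r-1}$ is the inverse of $a$).

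For the backward direction, I would start from a finite cyclic group $G=\langle a\rangle$ of order $r$. Since every element of $G$ has index $1$, the generator $a$ has index $1$ and period $r$, so $G=M(1,r)$, which is precisely form (i) of Theorem \ref{gamma-complete}; hence $\Gamma(G)$ is complete. Alternatively, one can argue as in Case $1$ of that theorem's proof: for any two elements $x,y\in G$, the subgroup $\langle x,y\rangle$ is a subgroup of a cyclic group, hence cyclic, hence a monogenic subsemigroup of $G$ by Lemma \ref{ch1-cyclic implies monogenic}, so $x\sim y$ in $\Gamma(G)$.

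For the forward direction, suppose $\Gamma(G)$ is complete. A finite group is in particular a finite semigroup, so Theorem \ref{gamma-complete} applies and forces $G$ to be one of $M(1,r)$, $M(2,r)$, or $M(3,r)$ with $r$ odd. The crux is to discard the last two possibilities: in $M(2,r)$ (respectively $M(3,r)$) the generator $a$ has index $2$ (respectively $3$), so the distinct powers $a,\ldots,a^{m-1}$ lie outside the maximal subgroup $\mathcal{K}_a=\{a^m,\ldots,a^{m+r-1}\}$ and therefore admit no inverse; consequently neither $M(2,r)$ nor $M(3,r)$ is a group. This is just the contrapositive of the preliminary remark that elements of finite groups have index $1$. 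Hence the only surviving case is $G=M(1,r)$, the cyclic group of order $r$.

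The corollary is thus a near-immediate specialization of Theorem \ref{gamma-complete}, and the only genuine content—the single point that needs care—is this group-versus-non-group distinction among the three listed forms, namely the fact that $M(m,r)$ fails to be a group precisely when $m\ge 2$.
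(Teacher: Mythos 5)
Your proposal is correct and matches the paper's intent exactly: the paper states this corollary without proof as an immediate specialization of Theorem \ref{gamma-complete}, relying on the fact recorded in the preliminaries that every element of a finite group has index $1$, which is precisely the observation you use to rule out forms (ii) and (iii) and to identify a cyclic group with $M(1,r)$. Your filled-in details (both directions, and the group-versus-non-group distinction for $m\ge 2$) are accurate.
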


\begin{theorem}\label{bipartite} Let $S$ be a semigroup. Then the following statements are equivalent:
	\begin{enumerate}[\rm (i)]
		\item  $\pi (S) \subseteq \{1,2\}$
		\item $\Gamma(S)$ is acyclic\index{acyclic graph}
		\item $\Gamma(S)$ is bipartite\index{bipartite}.
	\end{enumerate}	 
\end{theorem}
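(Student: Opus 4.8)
The plan is to establish the cycle of implications (i) $\Rightarrow$ (ii) $\Rightarrow$ (iii) $\Rightarrow$ (i). The middle step is essentially free: an acyclic graph contains no cycle at all, in particular no odd cycle, so it is bipartite by Theorem~\ref{ch1-bipartite}. All the content sits in the two end implications, and in both I would first record a sharp description of the edges.

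For (i) $\Rightarrow$ (ii) I would start from the observation that if $o(z)=2$ then $z$ is not idempotent, while $z^2$ \emph{is} idempotent: since $z^3 \in \langle z\rangle=\{z,z^2\}$ one gets $z^4=z^2$, hence $(z^2)^2=z^2$. The key structural claim is that under hypothesis (i) every edge of $\Gamma(S)$ joins an order-$2$ element $z$ to the idempotent $z^2$. Indeed, if $x\sim y$ then $\langle x,y\rangle=\langle w\rangle$ with $o(w)\le 2$, so $\langle w\rangle$ has at most two elements; as $x\ne y$ this forces $o(w)=2$ and $\{x,y\}=\{w,w^2\}$. Consequently there are no edges between two idempotents and none between two non-idempotent order-$2$ elements, and each order-$2$ vertex $z$ has $z^2$ as its \emph{only} neighbour, i.e.\ degree $1$. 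Acyclicity then follows quickly: any cycle would have to pass through an order-$2$ vertex (there being no edges among idempotents), but a degree-$1$ vertex cannot lie on a cycle. Thus $\Gamma(S)$ is a disjoint union of stars centred at the idempotents, hence acyclic; this is consistent with Corollary~\ref{ch2-component-finite}, since $C(f)=S_f=\{f\}\cup\{z : o(z)=2,\ z^2=f\}$.

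For (iii) $\Rightarrow$ (i) I would argue by contraposition: assuming some $a\in S$ has $o(a)\ge 3$, I produce a triangle, so $\Gamma(S)$ contains an odd cycle and is not bipartite. Since $\langle a,a^i\rangle=\langle a\rangle$ is monogenic, $a\sim a^i$ for every $a^i\in\langle a\rangle\setminus\{a\}$; hence it suffices to find $i,j\ge 2$ with $a^i\sim a^j$. If $\langle a\rangle=M(1,r)$ is a cyclic group with $r\ge 3$, then the induced subgraph on $\langle a\rangle$ is complete by Theorem~\ref{gamma-complete}, giving a $K_3$. If instead $m\ge 2$, I would use $a^4=(a^2)^2\in\langle a^2\rangle$, so $\langle a^2,a^4\rangle=\langle a^2\rangle$ is monogenic and $\{a,a^2,a^4\}$ is a triangle, provided $a^2\ne a^4$ and the three vertices are distinct. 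One checks $a\ne a^2$ and $a\ne a^4$ automatically when $m\ge 2$, and that $a^2=a^4$ forces $(m,r)=(2,2)$; in that single exceptional case one verifies directly that $\langle a^2,a^3\rangle=\langle a^3\rangle$, so $\{a,a^2,a^3\}$ is a triangle.

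I expect the main obstacle to be precisely this case analysis in (iii) $\Rightarrow$ (i): isolating the finitely many degenerate monogenic semigroups (the group $M(1,3)$ and the semigroup $M(2,2)$) where the generic triangle $\{a,a^2,a^4\}$ collapses, and confirming that an alternative triangle survives there. The structural edge description in (i) $\Rightarrow$ (ii) is the other point needing care, but once the ``only edges are $z\sim z^2$'' claim is in hand the star-forest conclusion is immediate.
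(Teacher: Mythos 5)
Your argument is correct, but both substantive implications take a genuinely different route from the paper's. For (i) $\Rightarrow$ (ii) the paper argues by contradiction from a hypothetical cycle $C \subseteq S_f$, splitting into the cases where $C$ does or does not contain an idempotent vertex, and in each case extracting two adjacent non-idempotents $a_i \sim a_j$ whose common monogenic container $\langle z \rangle$ must then satisfy $o(z) \geq 3$. Your edge classification --- every edge has the form $\{w, w^2\}$ with $o(w)=2$ and $w^2 \in E(S)$ --- is a stronger, more structural statement: it exhibits $\Gamma(S)$ as a disjoint union of stars centred at idempotents, makes acyclicity immediate without the paper's case split, and incidentally re-derives the paper's subsequent corollaries on trees and star graphs. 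For (iii) $\Rightarrow$ (i) the paper chooses a different triangle: for finite $o(a) \geq 3$ it takes the (unique) idempotent $f \in \langle a \rangle$ and any $x \in \langle a \rangle \setminus \{a, f\}$; since $f \in \langle x \rangle$, the triangle $\{a, x, f\}$ exists uniformly with no degenerate cases, the infinite case being settled by $a \sim a^2 \sim a^4 \sim a$ exactly as in your generic case. Your triangle $\{a, a^2, a^4\}$ instead requires the repairs you describe, and your analysis of the exceptions is right: with $m \geq 2$ and $o(a) \geq 3$, the collapse $a^2 = a^4$ forces $(m,r)=(2,2)$, where indeed $\langle a^2, a^3 \rangle = \{a^2, a^3\} = \langle a^3 \rangle$, and the group case follows from Theorem \ref{gamma-complete} (or directly from Lemma \ref{ch1-cyclic implies monogenic}). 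So the paper's choice buys uniformity at the cost of invoking uniqueness of the idempotent in a finite monogenic semigroup, while yours is more elementary but needs the finite case analysis. One presentational gap to fix: your dichotomy ``$\langle a \rangle = M(1,r)$ versus $m \geq 2$'' tacitly assumes $o(a)$ is finite; you should say explicitly that when $o(a)$ is infinite all powers of $a$ are distinct, so $\{a, a^2, a^4\}$ is already a triangle and no index exists to appeal to.
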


\begin{proof} 
	(i) $\Rightarrow$ (ii) Suppose $\pi(S) \subseteq \{1, 2\}$. Clearly, $S$ is of bounded exponent. Let if possible, there exists a cycle
	$C:$ $a_0 \sim a_1 \sim \cdots \sim a_k \sim a_0$, in $\Gamma(S)$. Then $C \subseteq S_f$ for some $f \in E(S)$. If none of the vertices of $C$ are idempotents, then $a_0, a_1 \in \langle z_1 \rangle$ for some $z_1 \in S$. Consequently, o$(z_1) \geq 3$; a contradiction. If one of the  vertex of  $C$ is idempotent, then note that there exist two non idempotent elements $a_i, a_j$ such that $a_i \sim a_j$. Thus, $a_i, a_j \in \langle z \rangle$ for some $z \in S$. Since $\langle z \rangle$  contains an idempotent element also, we get o$(z) \geq 3$; again a contradiction.
	
	\noindent (ii) $\Rightarrow$ (iii) Since $\Gamma(S)$ is acyclic graph so that it does not contain any cycle. By Theorem \ref{ch1-bipartite}, $\Gamma(S)$ is bipartite.
	
	\noindent (iii) $\Rightarrow$ (i) Suppose $\Gamma(S)$ is a bipartite graph.  By Theorem \ref{ch1-bipartite}, $\Gamma(S)$ does not contain any odd cycle. To prove $\pi(S) \subseteq \{1, 2\}$. Let if possible, there exist  $a \in S$ such that o$(a) \geq  3$. If $o(a)$ is infinite then clearly $a \sim a^2 \sim a^4 \sim a$ is a triangle in $\Gamma(S)$; a contradiction (see Theorem \ref{ch1-bipartite}). Now we assume that $o(a)$ is finite. Note that there exist $x, f \in S$ such that $f \in E(S) \cap \langle a \rangle$ and  $x \in \langle a \rangle \setminus \{a, f\}$. As a consequence, the vertices $x, a$ and $f$ forms a triangle; again a contradiction of the fact that $\Gamma(S)$ is bipartite.
\end{proof}

\begin{corollary}
	Let $S$ be a semigroup of bounded exponent. Then $\Gamma(S)$ is a tree\index{tree} if and only if $|E(S)| = 1$ and $\pi(S) \subseteq \{1, 2\}$.
\end{corollary}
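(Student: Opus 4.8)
The plan is to decompose the property ``tree'' into its two defining parts, \emph{connected} and \emph{acyclic}, and to show that each of the two conditions $|E(S)| = 1$ and $\pi(S) \subseteq \{1,2\}$ corresponds to exactly one of these graph-theoretic properties. Since $S$ is assumed to be of bounded exponent, both of the structural results quoted below apply without any further hypotheses.

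First I would dispose of acyclicity for free: by the equivalence (i) $\Leftrightarrow$ (ii) of Theorem \ref{bipartite}, the graph $\Gamma(S)$ is acyclic if and only if $\pi(S) \subseteq \{1,2\}$. So half of the claimed equivalence is already in hand, and the remaining task is to prove that, for a semigroup of bounded exponent, $\Gamma(S)$ is connected if and only if $|E(S)| = 1$. For this I would appeal to the description of the connected components. By Remark \ref{ch1-re-classification-compoenent} we have the decomposition $S = \bigcup_{f \in E(S)} S_f$, and the sets $S_f$, $S_{f'}$ are disjoint for distinct idempotents $f, f'$. By Corollary \ref{ch2-component-finite}, each $S_f$ equals the component $C(f)$ and is therefore a connected component of $\Gamma(S)$. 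Hence the connected components of $\Gamma(S)$ are in bijection with $E(S)$, so the number of components equals $|E(S)|$; consequently $\Gamma(S)$ is connected precisely when $|E(S)| = 1$.

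Assembling the two pieces gives the result: $\Gamma(S)$ is a tree if and only if it is both connected and acyclic, which by the above is equivalent to $|E(S)| = 1$ and $\pi(S) \subseteq \{1,2\}$. The one point I would flag explicitly is that the single-idempotent condition is genuinely needed and not subsumed by acyclicity: acyclicity alone characterizes the weaker property of being a \emph{forest}, whereas a tree must in addition be connected, which is exactly what forces $|E(S)| = 1$. There is no delicate estimate here; the only (mild) obstacle is bookkeeping the component count correctly from the disjoint-union decomposition, and that is handled entirely by the already-established structural results.
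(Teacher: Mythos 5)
Your proof is correct and is essentially the paper's own argument: the paper states this corollary without a separate proof precisely because it follows immediately from the equivalence (i) $\Leftrightarrow$ (ii) of Theorem \ref{bipartite} (acyclicity $\Leftrightarrow$ $\pi(S) \subseteq \{1,2\}$) together with the component description of $\Gamma(S)$ via Remark \ref{ch1-re-classification-compoenent} and Corollary \ref{ch2-component-finite} (connectedness $\Leftrightarrow$ $|E(S)| = 1$). Your explicit bookkeeping that the components are in bijection with $E(S)$, and your remark distinguishing tree from forest, are exactly the intended reasoning.
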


\begin{corollary}
	Let $G$ be a group. Then the following statements are equivalent:
	\begin{enumerate}
		\item[\rm (i)]   exponent of $G$ is at most $2$. Moreover if $G$ is finite, then  $G \cong \mathbb Z_2 \times \mathbb Z_2 \times \cdots \times \mathbb Z_2$.
		\item[\rm (ii)] $\Gamma(G)$ is acyclic graph
		\item[\rm (iii)] $\Gamma(G)$ is bipartite
		\item[\rm (iv)] $\Gamma(G)$ is a tree
		\item[\rm (v)] $\Gamma(G)$ is a star graph\index{star graph}
	\end{enumerate}	 
\end{corollary}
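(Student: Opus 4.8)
The plan is to reduce most of the work to Theorem \ref{bipartite}, which already establishes for an arbitrary semigroup the equivalence of $\pi(S)\subseteq\{1,2\}$, acyclicity and bipartiteness, and then to supply a direct structural description of $\Gamma(G)$ when $G$ is a group of small exponent. First I would record that in a group every element has index $1$, so $o(a)$ coincides with the usual order of $a$; hence the condition $\pi(G)\subseteq\{1,2\}$ says exactly that $G$ has exponent at most $2$. Applying Theorem \ref{bipartite} to $S=G$ then yields at once the equivalences $\pi(G)\subseteq\{1,2\}\Leftrightarrow$ (ii) $\Leftrightarrow$ (iii), identifying the first clause of (i) with (ii) and (iii).

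The heart of the argument is a structural claim: if $G$ has exponent at most $2$, then $\Gamma(G)$ is the star centred at the identity $e$. I would argue that for every $x\neq e$ we have $\langle e,x\rangle=\langle x\rangle=\{e,x\}$, a monogenic subsemigroup of order two by Lemma \ref{ch1-cyclic implies monogenic}, so $e\sim x$; while for distinct $x,y$ both different from $e$, the subsemigroup $\langle x,y\rangle$ equals the subgroup $\{e,x,y,xy\}\cong\mathbb{Z}_2\times\mathbb{Z}_2$, which is not monogenic, whence $x\nsim y$. Thus the edge set consists precisely of the spokes $e\sim x$, i.e.\ $\Gamma(G)=K_{1,\,|G|-1}$.

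With the star structure in hand the remaining implications close in a cycle: (i) $\Rightarrow$ (v) is the structural claim; (v) $\Rightarrow$ (iv) because a star is a tree; (iv) $\Rightarrow$ (ii) because a tree is acyclic; (ii) $\Rightarrow$ (iii) is Theorem \ref{bipartite}; and (iii) $\Rightarrow$ (i) uses Theorem \ref{bipartite} to recover exponent at most $2$, after which the ``Moreover'' clause follows from the standard fact that $x^2=e$ for all $x$ forces $G$ to be abelian (since $xy=(xy)^{-1}=y^{-1}x^{-1}=yx$), the structure theorem for finite abelian groups then giving $G\cong\mathbb{Z}_2\times\mathbb{Z}_2\times\cdots\times\mathbb{Z}_2$.

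The one genuinely new step, and hence the main obstacle, is the non-adjacency of two distinct non-identity elements, namely verifying that $\langle x,y\rangle$ is the Klein four-group and therefore non-monogenic; everything else is either a direct appeal to Theorem \ref{bipartite} or an elementary graph/group fact. A minor point I would flag is that for infinite $G$ of exponent $2$ the graph is an infinite star, so statement (v) should be read with $K_{1,n}$ allowing $n$ to be the (possibly infinite) cardinality $|G|-1$.
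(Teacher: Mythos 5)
Your proposal is correct and follows essentially the route the paper intends: the paper states this as an immediate consequence of Theorem \ref{bipartite} (which gives (i)$\Leftrightarrow$(ii)$\Leftrightarrow$(iii), since in a group every element has index one so $\pi(G)\subseteq\{1,2\}$ means exponent at most $2$), with (iv) and (v) following because $E(G)=\{e\}$ forces $\Gamma(G)$ to be the star centred at the identity, exactly the structural claim you verify. Your explicit check that $\langle x,y\rangle\cong\mathbb{Z}_2\times\mathbb{Z}_2$ is non-monogenic for distinct non-identity $x,y$ correctly fills in the detail the paper leaves implicit.
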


\begin{theorem}
	Let $S$ be a semigroup of bounded exponent. Then $S$ is completely regular semigroup if and only if all the connected components of $\Gamma(S)$ forms a group.
\end{theorem}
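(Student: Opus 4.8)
The plan is to reduce both implications to the explicit description of the connected components that is already available for bounded-exponent semigroups. By the discussion following Corollary \ref{ch2-component-finite} (together with Remark \ref{ch1-re-classification-compoenent}), the connected components of $\Gamma(S)$ are precisely the sets $S_f = \{a \in S : a^m = f \text{ for some } m \in \mathbb N\}$, as $f$ ranges over $E(S)$. Thus the statement to be proved becomes: every element of $S$ lies in a subgroup if and only if each $S_f$ is a subgroup of $S$. I would keep two characterizations of complete regularity in play: the definition (every element lies in a subgroup) and Proposition \ref{ch1-completely-regular} (every $\mathcal H$-class is a group), and I would use Corollary \ref{ch1-H-class} to produce the relevant subgroups, namely the $\mathcal H$-class $H_f$ containing the idempotent $f$.

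The core of the argument is the forward direction, where I would prove the identification $S_f = H_f$ for every $f \in E(S)$. For the containment $S_f \subseteq H_f$, take $a \in S_f$, so $a^m = f$. Complete regularity puts $a$ in some subgroup $G$ with identity idempotent $e$; since $G$ is closed, every power of $a$ lies in $G$, whence $f = a^m \in G$, and as the only idempotent of the group $G$ is $e$ we conclude $f = e$. Because $H_f$ is the maximal subgroup with identity $f$, the subgroup $G$ is contained in $H_f$, giving $a \in H_f$. For the reverse containment $H_f \subseteq S_f$, I would use bounded exponent: if $a \in H_f$ then $a^n$ is idempotent (where $n$ is the exponent) and lies in the group $H_f$, so $a^n = f$, i.e. $a \in S_f$. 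Since $H_f$ is a group by Corollary \ref{ch1-H-class}, each component $S_f = H_f$ is a group, as required.

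The reverse direction is then immediate: if every component $S_f$ is a group, then given any $a \in S$ we have $a^n \in E(S)$ by bounded exponent, so $a \in S_f$ for $f = a^n$, and $S_f$ is by hypothesis a subgroup of $S$ containing $a$; hence every element lies in a subgroup and $S$ is completely regular. The main obstacle is entirely in the forward direction, and specifically in the containment $S_f \subseteq H_f$: one must rule out the a priori possibility that $a \in S_f$ lies in a subgroup whose identity is some idempotent $e \neq f$. The key observation that dissolves this difficulty is that a group has a unique idempotent, so the relation $f = a^m \in G$ forces $f$ to be exactly the identity of the subgroup containing $a$; once this is in hand, maximality of $H_f$ and the exponent bound finish the proof cleanly.
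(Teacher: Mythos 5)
Your proof is correct, and its forward direction follows essentially the paper's route: both arguments identify each component $S_f$ with the $\mathcal{H}$-class $H_f$ of the idempotent $f$ and conclude via Corollary \ref{ch1-H-class}. The mechanical difference there is in the containment $S_f \subseteq H_f$: the paper places $a \in S_f$ in a group $\mathcal{H}$-class $H_{f'}$ and rules out $f' \neq f$ by the disjointness of the sets $S_f$ (Remark \ref{ch1-re-classification-compoenent}), whereas you argue inside the subgroup $G$ containing $a$, using that a group has a unique idempotent to force $f = a^m$ to be the identity of $G$, and then invoke maximality of $H_f$. Be aware that the maximality you use --- every subgroup with identity $f$ is contained in $H_f$ --- is slightly stronger than the literal statement of Corollary \ref{ch1-H-class}, which only asserts that $H_f$ is a subgroup; it is, however, a standard one-line verification (for $a \in G$ with identity $f$, the relations $a = af = fa$ and $f = aa^{-1} = a^{-1}a$ give $a \; \mathcal{H} \; f$), so this is a presentational point rather than a gap. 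Where you genuinely diverge is the converse: the paper proves the stronger identification $H_a = S_f$ using Green's-relations bookkeeping (Remark \ref{ch1-re-Green's-class} together with Corollary \ref{ch1-H-class}) and then applies the characterization of Proposition \ref{ch1-completely-regular}, while you simply observe that if each component $S_f$ is a group, then every $a \in S$ lies in the subgroup $S_{a^n}$ (with $n$ the exponent), which is complete regularity by the paper's own definition. Your converse is shorter and bypasses Green's relations entirely; what the paper's longer argument buys is the additional structural conclusion that the connected components coincide exactly with the $\mathcal{H}$-classes, a fact that is informative but not needed for the stated equivalence.
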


\begin{proof}
	Suppose $S$ is completely regular semigroup. Then every $\mathcal H$-class of $S$ is a group (see Proposition \ref{ch1-completely-regular}). To prove that each connected component $\Gamma(S)$ forms a group, by  Corollary \ref{ch2-component-finite},  we show that $S_f = H_f$ for each $f \in E(S)$. Let $a \in H_f$. Then $a^n = f$  for some $n \in \mathbb N$ so that $a \in S_f$. On the other hand, suppose $a \in S_f$. If $a \in H_{f'}$ for some $f' \ne f \in E(S)$, then $a \in S_{f'}$;  a contradiction. Thus $H_f = S_f$.
	
	Conversely suppose that every connected component of $\Gamma(S)$ forms a group. To prove $S$ is completely regular, we show that every $\mathcal{H}$-class forms a group (see Proposition \ref{ch1-completely-regular}). Let $a \in S$. Then $a \in S_f$ for some $f \in E(S)$ as $S$ is of bounded exponent. We claim that $H_a = S_f$. Suppose $b \in S_f$. By Remark \ref{ch1-re-Green's-class}, $(b, f) \in \mathcal{H}$. Also, we have $(a, f) \in \mathcal{H}$ so that $(a, b) \in \mathcal{H}$. It follows that $S_f \subseteq H_a$. On the other hand, let $b \in H_a$. Then $a \in S_f$ and it implies that $b \in H_f$. Since $H_f$ contains an idempotent so that $H_f$ forms a group (see Corollary \ref{ch1-H-class}). It follows that $b^m =  f$ for some $m \in \mathbb N$. Hence, $H_a  = S_f$ for some $f \in E(S)$.
\end{proof}

\begin{theorem}\label{regular-graph} Let $S$ be a semigroup of bounded exponent. Then the cyclic graph $\Gamma(S)$ is  regular  if and only if  $|S_f| = |S_{f'}|$ for all $f, f' \in E(S)$ and for $f \in E(S)$ there exists $a \in S$ with one of the following holds:
	\begin{enumerate}
		\item[\rm (i)] $S_f = \langle a  : a^{1 + r} = a \rangle$
		\item[\rm (ii)] $ S_f =  \langle a  : a^{2+r} = a^2 \rangle$
		\item[\rm (iii)] $ S_f = \langle a  : a^{3+r} = a^3 \rangle$, where $r$ is odd.
	\end{enumerate}
\end{theorem}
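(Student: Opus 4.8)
The plan is to reduce everything to the component structure already in hand: by Remark \ref{ch1-re-classification-compoenent} and Corollary \ref{ch2-component-finite}, the connected components of $\Gamma(S)$ are exactly the sets $S_f$ with $f \in E(S)$, and the driving observation is that each idempotent $f$ is adjacent to \emph{every} other vertex of its own component. Indeed, for $x \in S_f$ we have $f = x^m \in \langle x \rangle$, whence $\langle f, x \rangle = \langle x \rangle$ is monogenic and $f \sim x$. Thus $\deg(f) = |S_f| - 1$ for every $f \in E(S)$, and this single fact powers the forward direction.

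For the forward direction I would assume $\Gamma(S)$ is $k$-regular. Applying the degree formula to idempotents gives $|S_f| - 1 = k$, so $|S_f| = k+1$ is independent of $f$; this is precisely the condition $|S_f| = |S_{f'}|$ for all $f, f' \in E(S)$. Next, fix any $x \in S_f$. Since $S_f$ is a connected component, all neighbours of $x$ lie in $S_f$, and only $|S_f| - 1 = k$ vertices are available there; regularity forces $x$ to be adjacent to all of them, so the subgraph induced on $S_f$ is complete. I would then argue that $S_f$ is in fact a monogenic subsemigroup: completeness means every pair $x, y \in S_f$ satisfies $\langle x, y\rangle = \langle z\rangle$ for some $z$, and since $x$ is a power of $z$ the element $z$ lies in the component $C(x) = S_f$, giving $\langle z \rangle \subseteq S_f$; hence $S_f$ is closed under multiplication and the induced subgraph coincides with $\Gamma(S_f)$. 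Now Theorem \ref{gamma-complete}, applied to the semigroup $S_f$, forces $S_f$ to have exactly one of the three presentations (i)--(iii).

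The converse is the easy direction. If each $S_f$ has one of the forms (i)--(iii), then Theorem \ref{gamma-complete} shows $\Gamma(S_f)$ is complete, so the subgraph induced on the component $S_f$ is a clique on $|S_f|$ vertices. Under the additional hypothesis $|S_f| = |S_{f'}| = n$ for all idempotents, $\Gamma(S)$ is a disjoint union of copies of $K_n$ and is therefore $(n-1)$-regular.

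The main obstacle I anticipate is the transfer of Theorem \ref{gamma-complete} from the ambient semigroup to a single component: one must verify that $S_f$ is genuinely a subsemigroup, so that ``$\langle x, y\rangle$ is monogenic'' carries the same meaning inside $S_f$ as in $S$, before the completeness classification can be invoked. An alternative that bypasses this bookkeeping is to re-run the argument of Theorem \ref{gamma-complete} directly inside $S_f$: from completeness one shows there is a unique maximal monogenic subsemigroup meeting $S_f$ (any two distinct ones would have adjacent generators, contradicting maximality), and since every element of $S_f$ lies in some maximal monogenic subsemigroup, this yields $S_f = \langle a \rangle$ for a single $a$, after which the exponent restrictions $m \le 3$ (with $r$ odd when $m = 3$) follow exactly as before by ruling out the non-adjacency of $a^2$ and $a^3$.
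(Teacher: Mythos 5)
Your proposal is correct and follows essentially the same route as the paper: equal degrees of idempotents give $|S_f| = |S_{f'}|$, regularity within a component forces the induced subgraph on $S_f$ to be complete, the relation $\langle x, y\rangle = \langle z\rangle \subseteq C(x) = S_f$ shows $S_f$ is a subsemigroup, and Theorem \ref{gamma-complete} then yields the three presentations, with the converse being the disjoint-union-of-cliques observation. If anything, your write-up is more careful than the paper's, which asserts ``$S_f$ forms a clique as $\Gamma(S)$ is a regular graph'' without the degree-counting justification you supply.
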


\begin{proof}
	First suppose that $\Gamma(S)$ is regular and $f, f' \in E(S)$. Then deg$(f) = |S_f| -1 = |S_{f'}| -1 = {\rm deg}(f')$. Thus $|S_f| = |S_{f'}|$.  Let $f \in E(S)$ and $x, y \in S_f$. Consequently, $S_f$ forms a clique as $\Gamma(S)$ is a regular graph. Then $\langle x, y \rangle = \langle z \rangle$ for some $z \in S$. Observe  that $\langle z \rangle \subseteq S_f$. It follows that $S_f$ forms a subsemigroup of $S$. Since $\Gamma(S_f)$ is complete, by Theorem \ref{gamma-complete}, $S_f$ satisfies one of the given condition.
	
	Conversely, suppose  $x \in S_f$ and $y \in S_{f'}$ for some $f, f' \in E(S)$. By the given hypothesis, note that $\Gamma(S_f)$ and $\Gamma(S_{f'})$ are complete (see Theorem \ref{gamma-complete}). Then $|S_f| = |S_{f'}|$ follows that deg$(x) = {\rm deg}(y)$. Thus, $\Gamma(S)$ is regular.
\end{proof}

\begin{proposition}\label{ch3-isolated}
	An element $a$ of an arbitrary semigroup $S$ is an isolated vertex in $\Gamma(S)$ if and only if
	\begin{enumerate}[\rm (i)]
		\item $a$ is an idempotent in $S$.
		\item $ H_a = \{a\}$.
		\item $m_x = 1$ for each $x \in C(a)$.
	\end{enumerate}
\end{proposition}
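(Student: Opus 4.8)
The plan is to read off isolation from the component description in Proposition \ref{ch2-component-infinite}: the vertex $a$ is isolated exactly when its component $C(a)$ is the singleton $\{a\}$. I would prove the two implications separately and, since I expect the sufficiency direction to be the cleaner one, I would dispatch it first.

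For sufficiency, assume (i)--(iii) and suppose towards a contradiction that $a \sim b$ for some $b \neq a$, so that $\langle a, b\rangle = \langle c\rangle$ is monogenic. The first step is to place $c$ in the component of $a$. If $c = a$, then $\langle c\rangle = \langle a\rangle = \{a\}$ by (i), forcing $b = a$; otherwise $a \in \langle c\rangle$ gives $a \sim c$, so $c \in C(a)$. Condition (iii) then yields $m_c = 1$, so $\langle c\rangle = M(1,r_c)$ is a cyclic group. Since $a$ is an idempotent lying in this group by (i) and a group has a unique idempotent, $a$ is the identity of $\langle c\rangle$; hence $\langle c\rangle$ is a subgroup of $S$ with identity $a$ and therefore $\langle c\rangle \subseteq H_a$. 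By (ii), $H_a = \{a\}$, so $\langle c\rangle = \{a\}$ and again $b = a$, a contradiction. This proves $a$ is isolated.

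For necessity, assume $a$ is isolated, i.e.\ $C(a) = \{a\}$. Condition (i) is immediate: if $a \neq a^2$ then $a \sim a^2$, contradicting isolation, so $a$ is idempotent, which also gives $m_a = 1$; since $C(a) = \{a\}$ this is precisely (iii). For (ii) I would invoke Corollary \ref{ch1-H-class} to see that $H_a$ is a subgroup with identity $a$, and then argue that any non-identity $b \in H_a$ is adjacent to $a$: the cyclic subgroup $\langle b\rangle \subseteq H_a$ has $a$ as its identity, and by Lemma \ref{ch1-cyclic implies monogenic} it is a monogenic subsemigroup, so $\langle a, b\rangle = \langle b\rangle$ is monogenic and $a \sim b$, contradicting isolation; hence $H_a = \{a\}$.

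The main obstacle is exactly this last step. Lemma \ref{ch1-cyclic implies monogenic} applies only when the cyclic subgroup $\langle b\rangle$ is \emph{finite}, so the delicate case is a non-identity element $b \in H_a$ of infinite order: then $\langle a, b\rangle = \{a\}\cup\{b^k : k \geq 1\}$ is an infinite set carrying the idempotent $a$, which a monogenic semigroup cannot be, so $a \nsim b$ and the adjacency argument fails. I would therefore have to rule out such elements, i.e.\ show that isolation, combined with the standing hypotheses on $S$, forces $H_a$ to be periodic and hence trivial. Pinning down the precise hypothesis under which every element of $H_a$ has finite order is the only point where the argument is not purely formal, and it is where I would concentrate the care.
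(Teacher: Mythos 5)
Your sufficiency direction is correct and is essentially the paper's own argument: from $\langle a,b\rangle=\langle c\rangle$ you place $c$ in $C(a)$, use (iii) to conclude $m_c=1$ (note $c$ automatically has finite order since some power of $c$ equals the idempotent $a$), identify $a$ as the identity of the finite cyclic group $\langle c\rangle$, embed $\langle c\rangle\subseteq H_a$ (the paper does this step by citing Remark \ref{ch1-re-Green's-class}), and contradict (ii). Your derivations of (i) and (iii) in the necessity direction also match the paper, which gets (iii) by showing $S_a=\{a\}$ via $b^m=a\Rightarrow b\sim a$.

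The point at which you stopped — the necessity of (ii) — is a genuine gap, and you have diagnosed it exactly: it cannot be closed, because the proposition as stated is false for an arbitrary semigroup. The paper's proof asserts that ``every vertex of the cyclic graph induced by $H_a$ will be adjacent with $a$,'' which tacitly invokes Lemma \ref{ch1-cyclic implies monogenic} and hence finiteness of the cyclic subgroup $\langle b\rangle$; as you observe, if $b\in H_a$ has infinite order then $\langle a,b\rangle=\{a\}\cup\{b^k:k\geq 1\}$ is an infinite semigroup containing an idempotent, and a monogenic semigroup containing an idempotent must be finite, so $a\nsim b$ and the paper's claimed adjacency fails. A concrete counterexample to the stated equivalence: take $S=(\mathbb{Z},+)$ and $a=0$. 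Every monogenic subsemigroup $\langle t\rangle=\{kt: k\geq 1\}$ either equals $\{0\}$ or omits $0$, so $\langle 0,x\rangle$ is never monogenic for $x\neq 0$ and $0$ is an isolated vertex of $\Gamma(\mathbb{Z})$; conditions (i) and (iii) hold (here $C(0)=\{0\}$), yet $H_0=\mathbb{Z}\neq\{0\}$ by Remark \ref{ch1-re-Green's-class}, so (ii) fails. Thus the hypothesis you were hunting for is genuinely needed: if $S$ is periodic (in particular, of bounded exponent, as in most of the paper), then any $b\in H_a\setminus\{a\}$ satisfies $b^n=a$ for some $n$, giving $b\in S_a$ and $b\sim a$, and both your outline and the paper's argument go through. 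So your attempt is incomplete as a proof of the statement as written, but the missing step reflects a flaw in the paper's own proof rather than a missing idea on your side.
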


\begin{proof}
	Let $a$ be an isolated vertex in $\Gamma(S)$. Clearly $a \in E(S)$.  Otherwise $a \sim a^2$. Consequently, $ H_a$ forms a group with the identity element $a$. Thus, every vertex of the cyclic graph induced by $H_a$ will be adjacent with $a$. Let if possible, $x \in H_a \setminus \{a\}$ then $x \sim a$; a contradiction. If $b \ne a \in S_a$ then $b^m = a$ for some $ m \in \mathbb N$. It follows that $b \sim a$; a contradiction. Thus, $S_a = \{a\}$ and $m_a  = 1$.
	
	Conversely suppose that for $a \in S$ satisfy (i), (ii) and (iii). Let if possible,  $a \sim x$ for some $x \in S$. Then $ \langle a, x \rangle = \langle b \rangle$ for some $b \in S$. Consequently, $a \sim b$ give $m_b = 1$. In view of Remark \ref{ch1-re-Green's-class}, $\langle b \rangle \subseteq H_a$ and so by (ii), $x = a$; a contradiction.
\end{proof}

\section{ The Clique Number of $\Gamma(S)$}\label{ch2-sec-clique}
In this section, we obtain the clique number of $\Gamma(S)$. The following lemma is useful in the sequel.

\begin{lemma}\label{ch1-gen-Ka^i}
	Let $S = M(m, r) = \langle a \rangle$ be a monogenic semigroup. Then  $\mathcal{K}_{a^i} = \langle a^{m+ g +i} \rangle$ for some $g$ such that $0 \leq g \leq r - 1$.
\end{lemma}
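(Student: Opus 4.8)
The plan is to pin down the idempotent that acts as the identity of the group $\mathcal{K}_{a^i}$, and then to display $a^{m+g+i}$ as a generator of this cyclic group. Let $e = a^{m+g}$ be the idempotent of $\mathcal{K}_a$, so that $0 \le g \le r-1$ and $m+g \equiv 0 \pmod r$; this fixed $g$ (depending only on $m,r$, not on $i$) is the one the statement calls for. Recall from the discussion of $M(m,r)$ in the preliminaries that $e$ is the \emph{only} idempotent of $S = \langle a \rangle$: among the distinct powers $a, a^2, \dots, a^{m+r-1}$ exactly one, namely $a^{m+g}$, squares to itself.

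First I would show that $e$ lies in $\langle a^i \rangle$ and is precisely the identity of $\mathcal{K}_{a^i}$. By Remark \ref{ch1-Ka^i} we have $\mathcal{K}_{a^i} = \langle a^i \rangle \cap \mathcal{K}_a$, which is the kernel of the finite monogenic semigroup $\langle a^i \rangle$ and hence a cyclic group; its identity is an idempotent of $S$, so by uniqueness it must equal $e$, and in particular $e \in \langle a^i \rangle$. Next I would prove $\mathcal{K}_{a^i} = e\langle a^i \rangle$ by double inclusion. Any $x \in \mathcal{K}_{a^i}$ has the form $x = a^{ik}$ and satisfies $ex = x$ since $e$ is its identity, whence $x = e a^{ik} \in e\langle a^i \rangle$. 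Conversely, each element $e a^{ik} = a^{m+g+ik}$ has exponent at least $m$, hence lies in $\mathcal{K}_a$, and lies in $\langle a^i \rangle$ because $e \in \langle a^i \rangle$; thus $e a^{ik} \in \langle a^i \rangle \cap \mathcal{K}_a = \mathcal{K}_{a^i}$.

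Finally I would identify $e\langle a^i \rangle$ with $\langle a^{m+g+i} \rangle$. As $e$ is idempotent and all the elements in play are powers of $a$ and therefore commute, we get $(a^{m+g+i})^k = (e a^i)^k = e^k a^{ik} = e a^{ik}$ for every $k \in \mathbb{N}$, so $\langle a^{m+g+i} \rangle = \{ e a^{ik} : k \ge 1 \} = e\langle a^i \rangle$. Chaining the three equalities yields $\mathcal{K}_{a^i} = \langle a^{m+g+i} \rangle$ with $0 \le g \le r-1$, as desired. The only genuinely delicate point is the claim that the idempotent of the subsemigroup $\langle a^i \rangle$ coincides with $e = a^{m+g}$; everything after that is bookkeeping with exponents. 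This step rests on the uniqueness of the idempotent in the finite monogenic semigroup $S$, which I would either invoke from the preliminary treatment of $M(m,r)$ or verify directly by observing that $a^k = a^{2k}$ forces $k \ge m$ and $k \equiv 0 \pmod r$.
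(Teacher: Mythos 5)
Your proof is correct and follows essentially the same route as the paper's: both identify the idempotent $e = a^{m+g}$ as the identity of $\mathcal{K}_{a^i}$ and then establish $\mathcal{K}_{a^i} = \langle a^{m+g+i} \rangle$ by double inclusion via the computation $(a^{m+g+i})^k = e\,a^{ik}$. If anything, your write-up is slightly more careful than the paper's, since you explicitly justify $e \in \langle a^i \rangle$ through the uniqueness of the idempotent in $M(m,r)$, a step the paper uses implicitly when it writes $a^{m+g} = (a^i)^t$.
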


\begin{proof}
	For $m, r \in \mathbb N$, there exists $g \in \mathbb N$ such that $0 \leq g \leq r - 1$ and $m +g \equiv 0({\rm mod} \; r)$. Since $a^{m +g}$ is an idempotent element of $S$ and so is the identity element of $\mathcal{K}_{a^i}$. First we show that $\mathcal{K}_{a^i} = \langle a^{m +g +i} \rangle$. Clearly, $a^{m +g}x = x$ for all $x \in \mathcal{K}_{a^i}$. To prove that $\mathcal{K}_{a^i} \subseteq \langle a^{m +g +i} \rangle$, consider $x \in \mathcal{K}_{a^i}$. Then $x = (a^i)^t$ for some $t \in \mathbb N$. Consequently, we get $x = a^{m +g} a^{it} = (a^{m+g+i})^t \subseteq \langle a^{m +g +i} \rangle$. Now assume that $y \in  \langle a^{m +g +i} \rangle$ and so $y = (a^{m +g +i})^s$ for some $s \in \mathbb N$. It follows that $y = a^{m +g +si} = (a^i)^t a^{si} \in \langle a^i \rangle$. Thus $y \in \langle a^i \rangle \cap \mathcal{K}_a   = \mathcal{K}_{a^i}$. Therefore $\mathcal{K}_{a^i} = \langle a^{m +g +i} \rangle$.
\end{proof}

\begin{lemma}\label{ch2- adjacency-cond-more-than-one-index}
	For $m > 1$, let $S = M(m, r) =  \langle a \rangle$ be a monogenic semigroup such that $i < j$ and $i, j < m$.  Then the followings are equivalent:
	
	\begin{enumerate}[\rm (i)]
		\item $a^i \sim a^j$
		
		\item $a^j \in \langle a^i \rangle$
		
		\item $i \mid j$.
	\end{enumerate}
\end{lemma}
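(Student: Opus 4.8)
The plan is to prove the cycle of equivalences by first settling the purely arithmetic equivalence $\mathrm{(ii)} \Leftrightarrow \mathrm{(iii)}$, then disposing of the trivial $\mathrm{(ii)} \Rightarrow \mathrm{(i)}$, and finally reducing the substantive direction $\mathrm{(i)} \Rightarrow \mathrm{(ii)}$ to the same bookkeeping. Throughout I would lean on the structural fact recorded in the preliminaries: in $S = M(m,r)$ the powers $a, a^2, \ldots, a^{m+r-1}$ are pairwise distinct, so every element has a unique reduced index in $\{1, \ldots, m+r-1\}$, and the tail $\{a, \ldots, a^{m-1}\}$ is disjoint from the group part $\mathcal{K}_a = \{a^m, \ldots, a^{m+r-1}\}$. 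Since $i, j < m$, both $a^i$ and $a^j$ are tail elements, and the crucial consequence I would isolate at the outset is: a tail element $a^k$ (with $k < m$) equals $a^s$ only when $s = k$, and in particular $a^k \notin \mathcal{K}_a$.

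For $\mathrm{(ii)} \Leftrightarrow \mathrm{(iii)}$ I would argue directly. The membership $a^j \in \langle a^i \rangle$ means $a^j = (a^i)^k = a^{ki}$ for some $k \ge 1$. Because $a^j$ is a tail element it cannot lie in $\mathcal{K}_a$, which forces $ki < m$, and then uniqueness of the reduced index gives $ki = j$, i.e. $i \mid j$. The converse is immediate, since $j = ki$ yields $a^j = (a^i)^k \in \langle a^i \rangle$. The implication $\mathrm{(ii)} \Rightarrow \mathrm{(i)}$ needs no work at all: if $a^j \in \langle a^i \rangle$ then $\langle a^i, a^j \rangle = \langle a^i \rangle$ is monogenic, so $a^i \sim a^j$.

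The main direction is $\mathrm{(i)} \Rightarrow \mathrm{(ii)}$. Assuming $a^i \sim a^j$, write $\langle a^i, a^j \rangle = \langle a^t \rangle$ with $1 \le t \le m+r-1$. From $a^i \in \langle a^t \rangle$ I would write $a^i = a^{kt}$; since $a^i$ is a tail element this forces $kt < m$ and $kt = i$, so $t \mid i$, whence $t \le i$ and $a^t$ is itself a tail element. On the other hand $a^t \in \langle a^i, a^j \rangle$ means $a^t = a^{pi+qj}$ for some $p, q \ge 0$ with $p + q \ge 1$; as $i < j$, every such exponent satisfies $pi + qj \ge i$, and because $a^t$ is a tail element the same uniqueness argument gives $pi + qj = t \ge i$. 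Combining $t \le i$ with $t \ge i$ yields $t = i$, so $\langle a^i, a^j \rangle = \langle a^i \rangle$ and hence $a^j \in \langle a^i \rangle$, which is $\mathrm{(ii)}$.

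The step I expect to be the crux is controlling the generator $a^t$: a priori $t$ could be large, even landing in the group part $\mathcal{K}_a$, and the whole argument hinges on forcing $t = i$. The decisive device is that $a^i$ being a tail element simultaneously (a) pins $a^t$ down to the tail via $t \mid i$, so that the powers of $a^t$ cannot wrap around into $\mathcal{K}_a$, and (b) prevents the generator from having reduced index below $i$, since every element of $\langle a^i, a^j \rangle$ has index at least $\min(i,j) = i$. The hypothesis $m > 1$ ensures the tail is nonempty and makes $i, j < m$ meaningful; I would note that the period $r$ plays no role, the entire argument being about the index.
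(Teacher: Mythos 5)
Your proof is correct, and its skeleton (the easy equivalence (ii)$\Leftrightarrow$(iii), the trivial (ii)$\Rightarrow$(i), and the substantive (i)$\Rightarrow$(ii)) matches the paper's; the two treatments of (ii)$\Rightarrow$(iii) are in fact identical. Where you genuinely diverge is in the main direction (i)$\Rightarrow$(ii). The paper writes $a^i = a^{kt}$, $a^j = a^{ks}$, $a^k = a^{ui+vj}$ and runs a case analysis on the coefficients: $v = 0$ gives (ii) at once; $u = 0$ is refuted via $a^i \in \langle a^j \rangle$; and when $u, v \neq 0$ the derived relation $a^k = a^{(tu+sv)k}$ forces $m \le k$, after which $kt = i$ and $t = 1$ squeeze out $k = i$. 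You instead isolate one uniqueness principle up front (if $k < m$ then $a^s = a^k$ forces $s = k$) and sandwich the generator's exponent: $a^i \in \langle a^t \rangle$ gives $kt = i$, hence $t \mid i$ and $t \le i$, while $a^t = a^{pi+qj}$ with $p + q \ge 1$ gives $t = pi + qj \ge \min(i,j) = i$, so $t = i$. This buys three things: it eliminates the case analysis entirely; it delivers the slightly stronger conclusion $\langle a^i, a^j \rangle = \langle a^i \rangle$ (the paper's $v = 0$ branch only identifies $\langle a^k \rangle \subseteq \langle a^i \rangle$ without pinning $k$); and it quietly sidesteps an infelicity in the paper's third case, which is actually vacuous --- there $m \le k$ together with $kt = i < m$ already contradict each other, so the paper's extraction of $t = 1$, $k = i$ inside that case is harmless but redundant. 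Both arguments ultimately rest on the same tool, namely that minimality of the index $m$ makes exponents below $m$ uniquely represented, so the difference is one of economy and organization rather than of underlying idea; your version is the tighter of the two.
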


\begin{proof}
	(i) $\Rightarrow$ (ii) Firs suppose that $a^i \sim a^j$ in $\Gamma(S)$. Then $\langle a^i, a^j \rangle = \langle a^k \rangle $ for some $k \in \mathbb N$. It follows that $a^i  = a^{kt}$, $a^j = a^{ks}$ and $a^{k} = a^{ui + vj}$ for some $s, t \in \mathbb N$ and $u, v \in \mathbb{N}_0$. First note that $a^i \ne a^j$. If $a^i = a^j$ then $m \leq i$ which is not possible. From $a^k = a^{ui + vj}$, we get  $a^k = a^{(tu + sv)k}$. Also, observe that either $s > 1$ or $t > 1$. Otherwise, $a^i = a^j$; a contradiction. Now if $v = 0$ then (ii) holds. We may now suppose that $v \ne 0 $. If $ u = 0$, then $a^k \in \langle a^j \rangle$ implies $a^i \in \langle a^j \rangle$. Thus, $a^i = a^{jl}$ for some $ l \in \mathbb N$. Since $i < j$, we have $m \leq i$ but given that $i < m$. Therefore, $u \ne 0$. Consequently we have $u \ne 0$ and $v \ne 0$. Since $(su + tv)k > k$, we get $m \leq k$. Now consider $a^i = a^{kt}$.   If $kt \ne i$, then we get $m \leq i$; a contradiction. Thus, $kt = i$. If $t \ne 1$, then $k < i$ so $m \le k < i$; again a contradiction. It follows that $t = 1$ and so $i = k$. Therefore, $a^j \in \langle a^i \rangle$.
	
	\noindent (ii) $\Rightarrow$ (iii) Suppose $a^j \in \langle a^i \rangle$ so that $a^j = a^{it}$ for some $t \in \mathbb N$. If $j \ne it$ then $m \leq j$, which is not possible. Thus $j = it$ and so $i \mid j$.
	
	\noindent (iii) $\Rightarrow$ (i) Suppose $i \mid j$. Then $j = ki$ for some $k \in \mathbb N$. It follows that $\langle a^i, a^j \rangle = \langle a^i \rangle$ and hence $a^i \sim a^j$.
\end{proof}

\begin{proposition}\label{ch1-size-Ka^i}
	Let $S = M(m, r) = \langle a \rangle$ be a monogenic semigroup. Then $|\mathcal{K}_{a^i}| = \frac{r}{(i, \; r)}$, where $1 \leq i \leq m + r - 1$.
\end{proposition}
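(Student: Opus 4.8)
The plan is to exploit the fact, established in the preliminary discussion, that $\mathcal{K}_a$ is a cyclic group of order $r$, and to identify $\mathcal{K}_{a^i}$ as the cyclic subgroup generated by the $i$-th power of a fixed generator of $\mathcal{K}_a$. Once this identification is made, the statement reduces to the classical formula for the order of a power of a generator in a finite cyclic group.

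First I would recall that there exists $g' \in \mathbb{N}$ with $0 \le g' \le r-1$ and $m + g' \equiv 1 \pmod r$, and that $c := a^{m+g'}$ generates $\mathcal{K}_a$, so that $\mathcal{K}_a = \langle c \rangle$ is cyclic of order $r$. On the other hand, by Lemma \ref{ch1-gen-Ka^i} we have $\mathcal{K}_{a^i} = \langle a^{m+g+i} \rangle$, where $0 \le g \le r-1$ and $m+g \equiv 0 \pmod r$.

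The key step is to show that $a^{m+g+i} = c^i$. I would compute $c^i = a^{i(m+g')}$ and compare the two exponents modulo $r$: since $m+g' \equiv 1 \pmod r$ we obtain $i(m+g') \equiv i \pmod r$, while $m+g \equiv 0 \pmod r$ gives $m+g+i \equiv i \pmod r$. Because $i \ge 1$, both exponents $i(m+g')$ and $m+g+i$ are at least $m$, so the reduction $a^s = a^{\,m + ((s-m)\bmod r)}$ (valid for all $s \ge m$) forces $a^{i(m+g')} = a^{m+g+i}$, that is, $c^i = a^{m+g+i}$. Hence $\mathcal{K}_{a^i} = \langle c^i \rangle$.

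Finally, since $\mathcal{K}_a = \langle c \rangle$ is a cyclic group of order $r$, the order of the element $c^i$, and therefore $|\mathcal{K}_{a^i}| = |\langle c^i \rangle|$, equals $\frac{r}{(i,r)}$ by the standard cyclic-group computation. This gives the claim. The main obstacle is the exponent identification $c^i = a^{m+g+i}$; the delicate point is verifying that both exponents lie in the range $\ge m$ so that the canonical reduction modulo $r$ applies, after which the congruences $m+g' \equiv 1$ and $m+g \equiv 0$ make the two powers coincide. The rest is routine.
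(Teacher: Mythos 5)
Your proof is correct, and it takes a genuinely cleaner route than the paper's. Both arguments start from Lemma \ref{ch1-gen-Ka^i}, i.e.\ from $\mathcal{K}_{a^i} = \langle a^{m+g+i} \rangle$ with $m+g \equiv 0 \pmod{r}$, and both ultimately invoke the order formula in a cyclic group of order $r$. But the paper proceeds by a case analysis on $g$ (Case $g = r-1$ versus $0 \le g < r-1$, the latter split into further subcases according to whether $g+i < r$, $g < i'$, or $g \ge i'$), in each case building an explicit isomorphism $\mathcal{K}_a \to \mathbb{Z}_r$ with a piecewise definition and then chasing which residue class $a^{m+g+i}$ lands in. You bypass all of this with one uniform observation: writing $c = a^{m+g'}$ for the canonical generator of $\mathcal{K}_a$ (with $m+g' \equiv 1 \pmod{r}$, as in the preliminaries), you verify $c^i = a^{m+g+i}$ directly from the reduction rule $a^s = a^{m + ((s-m) \bmod r)}$ for $s \ge m$, checking correctly that both exponents are $\ge m$ and congruent to $i$ modulo $r$. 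Then $|\mathcal{K}_{a^i}| = |\langle c^i \rangle| = r/(i,r)$ in one step. Your identification of the residue class via the generator $c$ is exactly the content that the paper's isomorphisms $\psi$ and $\phi$ extract laboriously, case by case; your version is shorter, uniform in $i$ and $g$, and less error-prone. One small point worth making explicit if you write this up: $\langle c^i \rangle$ is a priori a subsemigroup (positive powers), but since $\mathcal{K}_a$ is a finite group this coincides with the cyclic subgroup generated by $c^i$ (cf.\ Lemma \ref{ch1-cyclic implies monogenic}), so the group-order formula applies without change.
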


\begin{proof}
	In view of Lemma \ref{ch1-gen-Ka^i}, we have  $\mathcal{K}_{a^j} = \langle a^{m+ g +j} \rangle$ for some $g$, where $0 \leq g \leq r - 1$, $1 \leq j \leq m + r - 1$ and $m + g \equiv 0 ({\rm mod}\; r)$. We prove the result through the following cases:
	
	\noindent \textbf{Case 1:} $g = r - 1$. Then clearly $\mathcal{K}_a = \langle a^m \rangle$. Note that the map $\psi : \mathcal{K}_a \rightarrow \mathbb Z_r$ defined by $\psi(a^{m + j - 1}) = \bar j$, where $1 \leq j \leq r$ is a group isomorphism. Then $\psi(a^{m + i - 1}) = \bar i$.  By division algorithm, we get $x_i$ and $i'$ such that $i = rx_i + i'$ where $0 \leq i' < r$. Also note that $(i, r) = (i', r)$. By Lemma \ref{ch1-gen-Ka^i}, $\mathcal{K}_{a^i} = \langle a^{m+ r + i - 1} \rangle = \langle a^{m+ i - 1} \rangle = \langle a^{m+ rx_i + i' - 1} \rangle = \langle a^{m+ i' - 1} \rangle$. Therefore $|\mathcal{K}_{a^i}| = o(a^{m + i' -1}) = o(\bar{i'})$. It follows that $|\mathcal{K}_{a^i}| = \frac{r}{(i', \; r)} = \frac{r}{(i, \; r)}$.
	
	\noindent \textbf{Case 2:} $0 \leq g < r - 1$. Then clearly $\mathcal{K}_a = \langle a^{m +g +1} \rangle$. Note that the map $\phi : \mathcal{K}_a \rightarrow \mathbb Z_r$ defined by $\phi(a^{m +g +j}) = \bar j$ for $g \leq g + j \leq r -1$ and $\phi(a^{m +j}) = \overline{(r-g) +j}$ for $0 \leq j < g$ is a group isomorphism. Now if $g +i < r$ then $|\mathcal{K}_{a^i}| = o(a^{m +g + i}) = o(\bar i)$ in $\mathbb Z_r$. Thus $|\mathcal{K}_{a^i}| = \frac{r}{(i, \; r)}$. If $g +i \geq r$, then by division algorithm we get $g +i  = lr + i'$ where $0 \le i' < r$. We prove the result through the following subcases:
	
	\noindent \textit{Subcase 1:} $g < i'$. Then $i' = g + i''$ for some natural number $i''$. Since $\mathcal{K}_{a^i}  =  \langle a^{m +g + i} \rangle$ $= \langle a^{m + lr + i'} \rangle =  \langle a^{m +  i'} \rangle = \langle a^{m + g + i''} \rangle$. Therefore, we have 
	\begin{equation*} 
		\begin{split}
			|\mathcal{K}_{a^i}|  =  \;&  o(\overline{i''}) \; \text{ {\rm in}} \; \mathbb Z_r \; {\rm because} \;  g + i'' < r \\
			= \;& \frac{r}{(i'', \; r)} = \frac{r}{(i'- g, \; r)} = \frac{r}{(g + i - lr - g, \; r)}\\
			= \;& \frac{r}{(i - lr, \; r)} = \frac{r}{(i, \; r)}.\\
		\end{split}
	\end{equation*}
	
	\noindent \textit{Subcase 2:} $g \geq i'$. Then 
	\begin{equation*}
		\begin{split}
			|\mathcal{K}_{a^i}|  = \;&  o(a^{m + g + i}) = o(a^{m + lr + i'}) = o(a^{m + i'}) = o(\overline{r - g + i'}) \; {\rm in} \; \mathbb Z_r.
		\end{split}
	\end{equation*}
	
	It follows that 
	
	\begin{equation*}
		\begin{split}
			|\mathcal{K}_{a^i}|  &= \frac{r}{( r- g +i', \; r)} = \frac{r}{( r - g + g + i- lr, \; r)} =\frac{r}{(r(1-l) + i, \; r)} \\
			&= \frac{r}{(i, \; r)}.\\
		\end{split}
	\end{equation*}
\end{proof}


Now, in the following proposition, we obtain the clique number\index{clique! number} of $\Gamma(S)$, when $S$ is a monogenic semigroup . 

\begin{proposition}\label{ch2-clique-no-mono}
	Let $S = M(m, r) = \langle a \rangle$ be a monogenic semigroup. For $m > 1$, we have \[\omega(\Gamma(S)) = {\rm max} \{\mu_k : \; 1 \leq k < m \},\]
	$\mu_1 = 1 + r$ and for $ k \geq 2$, $\mu_k = 1 + \nu(k)  + \frac{r}{(r, \; k)}$, where $\nu(k)$ is the number of terms in the prime factorization of $k$.  
\end{proposition}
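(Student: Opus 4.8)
The plan is to analyse a maximum clique by separating its vertices into \emph{tail} elements $a^i$ with $i<m$ and \emph{group} elements lying in $\mathcal{K}_a=\{a^m,\dots,a^{m+r-1}\}$. Three adjacency facts drive the argument. First, by Lemma \ref{ch2- adjacency-cond-more-than-one-index}, two tail elements $a^i,a^j$ are adjacent iff $i\mid j$ or $j\mid i$; hence the tail indices occurring in any clique are pairwise comparable under divisibility and so form a divisor chain $d_1\mid d_2\mid\cdots\mid d_s$, whose largest term I denote $k=d_s<m$. Second, since $\mathcal{K}_a$ is a cyclic group, any two of its elements generate a cyclic, hence monogenic, subsemigroup (cf. Lemma \ref{ch1-cyclic implies monogenic}), so $\mathcal{K}_a$ is itself a clique. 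Third --- and this is the step I expect to be the crux --- I would prove the cross-adjacency criterion: for $i<m\le j$, one has $a^i\sim a^j$ if and only if $a^j\in\langle a^i\rangle$, equivalently $a^j\in\mathcal{K}_{a^i}$.

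To prove this criterion, suppose $\langle a^i,a^j\rangle=\langle a^c\rangle$. Writing $a^i=(a^c)^t$ and using $i<m$ together with the distinctness of $a,a^2,\dots,a^{m+r-1}$ forces $ct=i$, so $c\mid i$ and $c\le i<m$; in particular $a^c$ is a tail element. Writing the generator as $a^c=a^{ui+vj}$ with $u,v\in\mathbb{N}_0$, if $v\ge 1$ then $ui+vj\ge j\ge m$, making $a^{ui+vj}\in\mathcal{K}_a$ a group element, contradicting that $a^c$ is a tail element; hence $v=0$ and $u\ge1$, whence $a^c=a^{ui}$ and the same distinctness argument gives $c=i$. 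Thus $\langle a^i,a^j\rangle=\langle a^i\rangle$ and $a^j\in\mathcal{K}_{a^i}$; the converse is immediate since then $\langle a^i,a^j\rangle=\langle a^i\rangle$.

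With these facts in place the counting proceeds as follows. Fix the largest tail index $k$ of a clique. Because $d_1\mid\cdots\mid d_s=k$, the subgroups are nested, $\mathcal{K}_{a^k}\subseteq\cdots\subseteq\mathcal{K}_{a^{d_1}}$, so a group element adjacent to every tail element must lie in $\bigcap_{l}\mathcal{K}_{a^{d_l}}=\mathcal{K}_{a^k}$, which by Proposition \ref{ch1-size-Ka^i} has exactly $\tfrac{r}{(k,r)}$ elements, and conversely every element of $\mathcal{K}_{a^k}$ is admissible. The tail part is maximized by the longest divisor chain terminating at $k$, namely $1\mid p_1\mid p_1p_2\mid\cdots\mid k$, which has $\nu(k)+1$ terms, where $\nu(k)$ counts the prime factors of $k$ with multiplicity; all these indices are $\le k<m$ and so valid. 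Consequently the largest clique whose maximal tail index is $k$ has size $1+\nu(k)+\tfrac{r}{(k,r)}=\mu_k$, realized by the explicit clique $\{a,a^{p_1},\dots,a^{k}\}\cup\mathcal{K}_{a^k}$. A clique with no tail vertex lies in $\mathcal{K}_a$ and has size at most $r=\tfrac{r}{(1,r)}<\mu_1$, so it is dominated. Taking the maximum over $1\le k<m$ yields $\omega(\Gamma(S))=\max\{\mu_k:1\le k<m\}$.

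The main obstacle is precisely the cross-adjacency criterion, since one must rule out the generator $a^c$ of $\langle a^i,a^j\rangle$ being a group element; once that is secured, the nesting identity $\bigcap_l\mathcal{K}_{a^{d_l}}=\mathcal{K}_{a^k}$ and the longest-divisor-chain count are comparatively routine, resting on Proposition \ref{ch1-size-Ka^i} and Lemma \ref{ch1-gen-Ka^i}.
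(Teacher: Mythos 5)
Your proof is correct and takes essentially the same route as the paper's: both decompose a clique into a divisibility chain of tail elements plus a group part forced into $\mathcal{K}_{a^k}$, compute $|\mathcal{K}_{a^k}| = \frac{r}{(k,\,r)}$ via Proposition \ref{ch1-size-Ka^i}, and realize $\mu_k$ by the same extremal clique $\{a, a^{p_1}, \ldots, a^k\} \cup \mathcal{K}_{a^k}$. The only differences are cosmetic and slightly to your credit: your explicitly proved cross-adjacency criterion (for $i < m \le j$, $a^i \sim a^j$ iff $a^j \in \mathcal{K}_{a^i}$) is exactly what the paper invokes by citing the proof of Lemma \ref{ch2- adjacency-cond-more-than-one-index}((i) $\Rightarrow$ (ii)), whose statement assumes both exponents are below $m$, and your chain-length bound $s \le \nu(k)+1$ replaces the paper's maximality argument showing the tail indices of a maximal clique are successively prime multiples.
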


\begin{proof}
	Let $C$ be an arbitrary maximal clique of $\Gamma(S)$. To prove the result, we show that the  number of elements of index more than one in $C$ is $1 + \nu(k)$ and $|C \cap \mathcal{K}_a| = \frac{r}{(k, \; r)}$ for some $k$. First note that ${\rm N}[a] = S$ and so $a \in C$. Without loss of generality, we assume that $a, a^{i_1}, a^{i_2}, \ldots, a^{i_s}$ are in $C$ such that $1 < i_1 < i_2 < \cdots < i_s$ and index of each of these elements is more than one. We claim that $i_1$ is a prime. If $i_1$ is not a prime, then there exists a prime $p$ such that $p \mid i_1$. By Lemma \ref{ch2- adjacency-cond-more-than-one-index}, we get $a^p \sim a^j$ for all $j \in \{1, i_1, i_2, \ldots, i_s\}$. Let $a^q \in C \cap \mathcal{K}_a$. Then by the proof of Lemma \ref{ch2- adjacency-cond-more-than-one-index}((i) $\Rightarrow$ (ii)), we get $a^q \in \langle a^{i_s} \rangle$. Again by Lemma \ref{ch2- adjacency-cond-more-than-one-index}, we get $\langle a^q \rangle \subseteq \langle a^{p} \rangle$. It follows that $\langle a^{q}, a^p \rangle =  \langle a^{p} \rangle$ and so $a^q \sim a^p$. Consequently, $C \cup \{a^p\}$ forms a clique; a contradiction to the maximality of $C$. Thus, $i_1$ is a prime and we write $i_1 = p_1$. Now $a^{p_1} \sim a^{i_2}$ implies $p_1 \mid i_2$ (see Lemma \ref{ch2- adjacency-cond-more-than-one-index}). We get $i_2 = p_1t$ for some $t \in \mathbb N \setminus \{1\}$. Note that $t$  is a prime. Otherwise, there exists a prime $p'$ such that $p' \mid t$. Since $p_1 p' \mid p_1t = i_2$, by the similar argument as used above, $C \cup \{a^{p_1 p'}\}$ forms a clique and again we get a contradiction. As a result, $t$ is a prime and we write $t = p_2$ so that $i_2 = p_1 p_2$. On continuing similar process, we get $i_s = p_1 p_2 \ldots p_s$.
	
	Thus, there are $1 + \nu(i_s)$ elements of index more than one in $C$. Now we count the number of elements of index one in $C$. We show that $C \cap \mathcal{K}_a = \mathcal{K}_{a^{i_s}}$ so that $|C \cap \mathcal{K}_a| = \frac{r}{(i_s, \; r)}$ (see Proposition \ref{ch1-size-Ka^i}).

	To prove  $C \cap \mathcal{K}_a = \mathcal{K}_{a^{i_s}}$, consider $a^t \in \mathcal{K}_{a^{i_s}}$. Since $\mathcal{K}_{a^{i_s}} = \mathcal{K}_a \cap \langle a^{i_s} \rangle$ (cf. Remark \ref{ch1-Ka^i}), we get $a^t \in \langle a^{i_s} \rangle \subseteq  \langle a^j \rangle$ for all $j$, where $j \in \{1, i_1, i_2, \ldots, i_s\}$. Therefore, $a^t \sim a^j$. Since the subgraph induced by $\mathcal{K}_a$ is complete and  $a^t \in \mathcal{K}_a$, we get  $a^t$ is adjacent with all the elements of $C$ whose index is one. If $a^t \notin C$, then $C \cup \{a^t\}$ forms a clique; a contradiction to the maximality of $C$. Consequently $a^t \in C \cap \mathcal{K}_a$ so that $\mathcal{K}_{a^{i_s}} \subseteq \mathcal{K}_a \cap C$. Let $a^l \in \mathcal{K}_a \cap C$. Then $a^l \sim a^{i_s}$. By the proof  of Lemma \ref{ch2- adjacency-cond-more-than-one-index}((i) $\Rightarrow$ (ii)), we get $a^l \in \langle a^{i_s} \rangle$. Thus, $a^l \in \mathcal{K}_a \cap \langle a^{i_s} \rangle = \mathcal{K}_{a^{i_s}}$ so that $\mathcal{K}_a \cap \langle a^{i_s} \rangle = \mathcal{K}_{a^{i_s}}$.
	
	Now, for each $k$ such that $1 \leq k < m$, we provide a clique of size $\mu_k$. For $k = 1$, $\mathcal{K}_a \cup \{a\}$ forms a clique of size $1 + r$. For $k > 1$, we have $k = p_1 p_2 \ldots p_{s'}$. Then by Lemma \ref{ch2- adjacency-cond-more-than-one-index}, note that $\{a, a^{p_1}, a^{p_1 p_2}, \ldots, a^k \} \cup \mathcal{K}_{a^k}$ forms a clique. Because if $x \in \mathcal{K}_{a^k} = \langle a^k \rangle \cap \mathcal{K}_a$, then $x \in \langle a^k \rangle \subseteq \langle a^j \rangle$ for all $j \in \{p_1, p_1p_2,\ldots, k\}$. Consequently, $x \sim a^j$ for all $j$. This completes our proof.
\end{proof}

The following lemma will be useful in the sequel.

\begin{lemma}[{\rm\cite[Lemma 32]{a.Aalipour2017}}]\label{ch1-epg-cyclic-subgroup}
	Let $G$ be a group and suppose $x, y, z \in G$ such that $\langle x, y \rangle$, $\langle y, z \rangle$ and $\langle x, z \rangle$ are the cyclic subgroup of $G$. Then $\langle x, y, z \rangle$ is cyclic subgroup of $G$.  
\end{lemma}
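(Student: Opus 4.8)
The plan is to reduce the statement to a one-dimensional linear-algebra fact after passing to a quotient by $p$. First I would record that a cyclic subgroup is abelian, so the hypotheses force $x$, $y$, $z$ to commute pairwise; hence $A := \langle x, y, z\rangle$ is an abelian group generated by three elements, that is, a finitely generated abelian group, and it suffices to show $A$ is cyclic. For this I would use the following criterion: a finitely generated abelian group $A$ is cyclic if and only if $\dim_{\mathbb{F}_p}(A/pA) \le 1$ for every prime $p$. This follows from the structure theorem: writing $A \cong \mathbb{Z}^s \oplus \bigoplus_i \mathbb{Z}/p_i^{e_i}\mathbb{Z}$, one has $\dim_{\mathbb{F}_p}(A/pA) = s + \#\{i : p_i = p\}$, and the bound $\le 1$ for all $p$ forces $s \le 1$, with no torsion when $s = 1$ and at most one primary component per prime when $s = 0$; by the Chinese Remainder Theorem this is exactly the cyclic case (including $A \cong \mathbb{Z}$).

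Next I would fix a prime $p$ and work in the $\mathbb{F}_p$-vector space $V := A/pA$, which is spanned by the images $\bar{x}, \bar{y}, \bar{z}$ of the generators under the quotient map $\pi$. The point is that each hypothesis becomes a linear dependence of a pair: since $\langle x, y\rangle$ is cyclic, its image $\pi(\langle x,y\rangle) = \langle \bar{x}, \bar{y}\rangle$ is a homomorphic image of a cyclic group, hence is generated by a single element $\bar{w}$, and in characteristic $p$ the subgroup of $(V,+)$ generated by $\bar{w}$ coincides with the line $\mathbb{F}_p\,\bar{w}$; thus $\bar{x}$ and $\bar{y}$ are linearly dependent. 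The same argument applies to the pairs $(\bar{y}, \bar{z})$ and $(\bar{x}, \bar{z})$.

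Finally I would invoke the elementary fact that three vectors which are pairwise linearly dependent span a subspace of dimension at most $1$: if all three vanish the claim is clear, and otherwise one of them, say $\bar{x}$, is nonzero, whence pairwise dependence gives $\bar{y}, \bar{z} \in \mathbb{F}_p\,\bar{x}$, so $V = \langle \bar{x}, \bar{y}, \bar{z}\rangle$ has dimension $\le 1$. Since $p$ was arbitrary, the criterion yields that $A = \langle x, y, z\rangle$ is cyclic. The only non-formal ingredient is the cyclicity criterion, so the step to get right is its justification—in particular keeping track of the torsion-free rank so that the infinite cyclic case $A \cong \mathbb{Z}$ is not overlooked; everything else is routine.
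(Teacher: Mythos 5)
Your proof is correct, but note that there is nothing in the paper to compare it against: the paper does not prove this lemma at all, importing it verbatim as Lemma 32 of Aalipour, Akbari, Cameron, Nikandish and Shaveisi \cite{a.Aalipour2017}. Your argument is a valid, self-contained substitute, and every step checks out. The first reduction is sound: cyclic groups are abelian, so the three hypotheses make $x,y,z$ commute pairwise, and a group with pairwise commuting generators is abelian, so $A=\langle x,y,z\rangle$ is a finitely generated abelian group. Your cyclicity criterion ($A$ is cyclic if and only if $\dim_{\mathbb{F}_p}(A/pA)\le 1$ for every prime $p$) follows from the structure theorem exactly as you say, and you correctly keep the torsion-free rank $s$ in the count $\dim_{\mathbb{F}_p}(A/pA)=s+\#\{i: p_i=p\}$, which is what rules out $\mathbb{Z}^2$ and $\mathbb{Z}\oplus(\mathrm{torsion})$ while still admitting $A\cong\mathbb{Z}$; this matters here because the lemma as stated allows infinite cyclic subgroups. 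The mod-$p$ step is also right: $\pi(\langle x,y\rangle)$ is the subgroup of $V=A/pA$ generated by $\bar{x},\bar{y}$, it is a quotient of a cyclic group hence cyclic, and in a group of exponent $p$ a cyclic subgroup is $\mathbb{F}_p\bar{w}$, so each pair among $\bar{x},\bar{y},\bar{z}$ is linearly dependent; three pairwise dependent vectors span dimension at most one (your case split on whether some image is nonzero is the right way to see this). Compared with the more common proof of such statements for finite groups --- decompose into primary components and use that two elements of an abelian $p$-group generating a cyclic group have comparable cyclic subgroups, so the three $p$-parts sit inside the largest one --- your linear-algebra route is arguably cleaner and has the advantage of treating finite and infinite $G$ uniformly in a single argument.
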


\begin{lemma}\label{ch1-exponent}
	Let $S$ be a semigroup with exponent $n$. Then for $x \in S$, we have
	\begin{enumerate}[\rm (i)]
		\item $o(x) \leq 2n$ for all $x \in S$.
		
		\item the subsemigroup $\langle x \rangle$ is contained in some maximal monogenic subsemigroup of $S$.
	\end{enumerate}
\end{lemma}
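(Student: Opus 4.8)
The plan is to prove both parts from the defining relation $x^n = f \in E(S)$ together with the index/period description of $\langle x \rangle$ recalled in Section~2.

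For (i) I would first record that, since $f$ is idempotent, $x^{2n} = (x^n)^2 = f^2 = f = x^n$. Writing $m = m_x$ and $r = r_x$ for the index and period of $x$, the equality $x^n = x^{2n}$ with $n \ne 2n$ places $n$ in the set $\{k \in \mathbb{N} : \exists\, y \ne k,\ x^k = x^y\}$, whose least element is $m$; hence $m \le n$. In particular $n \ge m$, so both $n$ and $2n$ are powers that are at least $m$, for which $x^s = x^t$ holds exactly when $s \equiv t \pmod r$. From $x^{2n} = x^n$ I then get $2n \equiv n \pmod r$, i.e.\ $r \mid n$, so $r \le n$. Combining the two bounds, $o(x) = m + r - 1 \le n + n - 1 = 2n - 1 \le 2n$, which proves (i).

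For (ii) I would exploit that (i) makes every monogenic subsemigroup of $S$ finite of cardinality at most $2n-1$. Consider the family $\mathcal{F}$ of monogenic subsemigroups of $S$ containing $\langle x \rangle$; it is nonempty since $\langle x \rangle \in \mathcal{F}$, and every member has cardinality at most $2n-1$. Choose $\langle y \rangle \in \mathcal{F}$ of maximum cardinality. I claim $\langle y \rangle$ is a maximal monogenic subsemigroup of $S$: if not, then $\langle y \rangle \subsetneq \langle z \rangle$ for some monogenic $\langle z \rangle$, which is again finite, so $|\langle z \rangle| > |\langle y \rangle|$; but $\langle x \rangle \subseteq \langle y \rangle \subsetneq \langle z \rangle$ forces $\langle z \rangle \in \mathcal{F}$, contradicting the maximality of $|\langle y \rangle|$. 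Hence $\langle y \rangle$ is a maximal monogenic subsemigroup containing $\langle x \rangle$.

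The only delicate point is the bookkeeping in (i): one must verify $n \ge m$ before invoking the mod-$r$ characterisation of equal powers, since that characterisation applies only to exponents at least $m$. Part (ii) is then routine, the boundedness supplied by (i) being exactly what lets a maximum-cardinality choice replace an appeal to Zorn's lemma.
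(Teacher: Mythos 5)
Your proof is correct and takes essentially the same route as the paper: both parts rest on the index--period structure of $\langle x \rangle$, establishing $m_x \le n$ and $r_x \le n$ and then using the resulting uniform finiteness of monogenic subsemigroups to terminate the search for a maximal one. Your derivation of $r \mid n$ directly from $x^{2n} = x^n$ is in fact slightly cleaner than the paper's comparison of $n$ with the idempotent exponent $m+g$, and your maximum-cardinality choice in (ii) is just a repackaging of the paper's bounded ascending chain, so these differences are cosmetic rather than substantive.
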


\begin{proof}
	(i) Since $x^n = f$ for some $f \in E(S)$, we have $\langle x \rangle = M(m, r)$ for some $m, r \in \mathbb N$. There exists $g$ with $0 \leq g < r$ and $m +g \equiv 0 ({\rm mod} \; r)$ such that $x^{m + g} = f$. Clearly, $n \geq m$ as $x^n$ is the idempotent element of $\langle x \rangle$. Let if possible $n < m +g$. For $m \leq i \ne j \leq m + r- 1$, we have $x^i \ne x^j$ in $\langle x \rangle$. Therefore, $x^n \ne x^{m + g}$, which is not true because $x^n = x^{m + g} = f$. Thus, $n \geq m + g$. Also $r \mid m + g$ and $m \leq n$ gives $r \leq n$. It follows that $o(x) \leq m + r \leq 2n$. 
	
	\noindent (ii) If $\langle x \rangle$ is a maximal monogenic subsemigroup, then the result holds. Otherwise, $\langle x \rangle \subsetneq \langle x_1 \rangle$. If $\langle x_1 \rangle$ is maximal monogenic then this completes our proof. By (i), since $o(x) \leq 2n$, we get a finite chain such that $\langle x \rangle \subsetneq \langle x_1 \rangle \cdots \subsetneq \langle x_{k-1} \rangle \subsetneq \langle x_k \rangle$, where $k \leq 2n$ and $\langle x_k \rangle$ is a maximal monogenic subsemigroup of $S$. This complete the proof.
\end{proof}

\begin{lemma}\label{ch1-index-a^i}
	Let $a^i \in \langle a \rangle  = M(m, r)$. Then
	\begin{enumerate}[\rm (i)]
		\item $m_{a^i} = 1$ if and only if $i \geq m$.
		\item $m_{a^i} = 2$ if and only if $\left \lceil \frac{m}{2} \right\rceil \leq i \leq m - 1$.
	\end{enumerate}
\end{lemma}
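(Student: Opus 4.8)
The plan is to reduce both statements to a single observation: for $b = a^i$, the index $m_{a^i}$ equals the least $k \in \mathbb{N}$ for which $(a^i)^k$ lands in the group part $\mathcal{K}_a$. Recall from the preliminaries that the distinct powers of $a$ are $a, a^2, \ldots, a^{m+r-1}$ and that $\mathcal{K}_a = \{a^m, a^{m+1}, \ldots, a^{m+r-1}\}$; consequently $a^s \in \mathcal{K}_a$ if and only if $s \geq m$. Applying this to $b = a^i$, whose distinct powers are $b, b^2, \ldots, b^{m_{a^i} + r_{a^i} - 1}$ with periodic part $\mathcal{K}_b = \{b^{m_{a^i}}, \ldots, b^{m_{a^i}+r_{a^i}-1}\}$, the index $m_{a^i}$ is precisely the least $k$ with $b^k \in \mathcal{K}_b = \mathcal{K}_{a^i}$, since $b^{m_{a^i}-1}$ still lies in the tail.

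Next I would invoke Remark \ref{ch1-Ka^i}, which gives $\mathcal{K}_{a^i} = \langle a^i \rangle \cap \mathcal{K}_a$. Since $b^k = a^{ik} \in \langle a^i \rangle$ holds automatically, the membership $b^k \in \mathcal{K}_{a^i}$ is equivalent to $a^{ik} \in \mathcal{K}_a$, i.e. to $ik \geq m$. Hence $m_{a^i}$ is exactly the least $k$ with $ik \geq m$, namely $m_{a^i} = \left\lceil \frac{m}{i} \right\rceil$. Both parts of the lemma then fall out arithmetically, and if one prefers direct arguments, the ceiling formula can be bypassed by arguing through the membership condition directly.

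For (i), $m_{a^i} = 1$ forces $a^i \in \mathcal{K}_a$, whence $i \geq m$; conversely $i \geq m$ gives $a^i \in \mathcal{K}_a$, a group element, so $m_{a^i} = 1$. For (ii), $m_{a^i} = 2$ is equivalent to the pair of conditions $a^i \notin \mathcal{K}_a$ and $a^{2i} \in \mathcal{K}_a$, that is $i < m$ and $2i \geq m$; the first reads $i \leq m-1$ and the second reads $i \geq \left\lceil \frac{m}{2} \right\rceil$, yielding $\left\lceil \frac{m}{2} \right\rceil \leq i \leq m-1$. Conversely, these bounds give $a^i \notin \mathcal{K}_a$ (so $m_{a^i} \geq 2$) and $a^{2i} \in \mathcal{K}_a$ (so $m_{a^i} \leq 2$), forcing index exactly $2$.

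No step here is genuinely hard; the only point needing care is the translation of the index into the membership condition $ik \geq m$ through Remark \ref{ch1-Ka^i}, and keeping the inequalities consistent (strict versus non-strict) when converting bounds on $m/i$ into the stated bounds on $i$, so that the two endpoints $\left\lceil \frac{m}{2} \right\rceil$ and $m-1$ come out correctly.
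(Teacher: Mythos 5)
Your proof is correct, and it is a genuine repackaging of the paper's argument rather than a replica of it. The paper proves the two parts separately and by hand: for sufficiency it exhibits explicit identities witnessing small index --- $i \geq m$ puts $a^i$ in the group $\mathcal{K}_a$ of order $r$, so $(a^i)^{r+1} = a^i$; and $\left\lceil \frac{m}{2} \right\rceil \leq i \leq m-1$ gives $(a^i)^2 \in \mathcal{K}_a$, so $\bigl((a^i)^2\bigr)^{r+1} = (a^i)^2$ and $m_{a^i} \leq 2$ --- while for necessity it argues by contradiction from the fact that any repetition $(a^i)^t = a^i$ with $t > 1$ forces $i \geq m$. You instead prove the single unified statement $m_{a^i} = \left\lceil \frac{m}{i} \right\rceil$, by characterizing the index of $b = a^i$ as the least $k$ with $b^k \in \mathcal{K}_{a^i}$ and then using Remark \ref{ch1-Ka^i} to convert that membership into the condition $ik \geq m$; both parts of the lemma are then immediate arithmetic, and your formula determines $m_{a^i}$ for every $i$, not only in the index-$1$ and index-$2$ cases the lemma asks about. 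The underlying engine is the same in both proofs --- the powers $a, a^2, \ldots, a^{m+r-1}$ are pairwise distinct, so $a^s \in \mathcal{K}_a$ precisely when $s \geq m$ --- but your version isolates this as the only fact needed, at the cost of invoking Remark \ref{ch1-Ka^i}, which the paper's own proof of this lemma does not use. One step you should keep explicit rather than implicit: the identification of $m_{a^i}$ with the least $k$ such that $(a^i)^k \in \mathcal{K}_{a^i}$ rests on the distinctness of the powers $b, b^2, \ldots, b^{m_{a^i}+r_{a^i}-1}$ from the preliminaries, which is what rules out $b^k$ accidentally coinciding with a kernel element for $k < m_{a^i}$; you noted this ("$b^{m_{a^i}-1}$ still lies in the tail"), and with that observation in place the argument is complete.
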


\begin{proof}
	(i) Let $a^i \in \langle a \rangle  = M(m, r)$. If $i \geq m$, then $a^i \in \mathcal{K}_a$. Since $\mathcal{K}_a$ is a finite group of order $r$ so $(a^i)^{r + 1} = a^i$. Consequently, we get $m_{a^i} = 1$. To prove the converse part, we show that $m_{a^i} > 1$ for all $i < m$. On contrary, we assume that $i < m$ such that $m_{a^i} = 1$. Clearly, $m > 1$. Then $(a^i)^t = a^i$ for some $t > 1$ gives $m \leq i$; a contradiction.
	
	\smallskip
	\noindent
	(ii) For $\left \lceil \frac{m}{2} \right\rceil \leq i \leq m - 1$, we have $(a^i)^2 \in \mathcal{K}_a$. Therefore, $((a^i)^2)^{r + 1} = (a^i)^2$ gives $m_{a^i} \leq 2$. If $m_{a^i} = 1$, then we must have $m \leq i$; a contradiction. It follows that $m_{a^i} = 2$. For the converse part, we assume that $m_{a^i} = 2$. By (i), note that $i < m$. Since $m_{a^i} = 2$ then we get  $m \leq 2i$. If $1 \leq i < \left \lceil \frac{m}{2} \right\rceil$, then $2i < m$ which is not possible. Thus, we have $\left \lceil \frac{m}{2} \right\rceil \leq i \leq m - 1$.  
\end{proof}

\begin{lemma}\label{ch1-o(a^i)-in-o(a)}
	Let $S = \langle a \rangle$ be a monogenic semigroup and  $x \ne a \in  \langle a \rangle$. If $m_a > 1$, then $m_x \leq m_a$ and $o(x) < o(a)$. 
\end{lemma}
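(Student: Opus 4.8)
The plan is to write $x = a^{i}$ for some $i$ with $2 \le i \le m + r - 1$, which is legitimate since $x \neq a$ and $x \in \langle a \rangle = M(m,r)$, where $m = m_a$ and $r = r_a$. The two conclusions, $m_x \le m_a$ and $o(x) < o(a)$, can then be treated separately and by quite different means.

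For the bound on the index I would first recall two facts from the standard description of a monogenic semigroup given in the preliminaries: the power $a^{j}$ lies in the maximal subgroup $\mathcal{K}_a$ precisely when $j \ge m$ (the powers $a, a^{2}, \ldots, a^{m-1}$ are exactly the non-recurrent ``tail'' elements), and, applying the same description to $\langle a^{i} \rangle = M(m_{a^{i}}, r_{a^{i}})$, a power $(a^{i})^{t}$ lies in its maximal subgroup $\mathcal{K}_{a^{i}}$ if and only if $t \ge m_{a^{i}}$. Now I would consider $(a^{i})^{m} = a^{im}$: since $i \ge 1$ gives $im \ge m$, we have $a^{im} \in \mathcal{K}_a$, and trivially $a^{im} \in \langle a^{i} \rangle$, so by Remark \ref{ch1-Ka^i} (that $\mathcal{K}_{a^{i}} = \langle a^{i} \rangle \cap \mathcal{K}_a$) we obtain $(a^{i})^{m} \in \mathcal{K}_{a^{i}}$. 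By the index characterization this forces $m_{a^{i}} \le m = m_a$, which is the first assertion.

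For the strict inequality on the order I would instead argue that $\langle a^{i} \rangle \subsetneq \langle a \rangle$ properly. Since $x = a^{i} \in \langle a \rangle$ we already have $\langle x \rangle \subseteq \langle a \rangle$, so it suffices to exhibit an element of $\langle a \rangle$ missing from $\langle a^{i} \rangle$, and I claim $a$ itself works. Indeed, if $a \in \langle a^{i} \rangle$ then $a = a^{ik}$ for some $k \ge 1$; as $i \ge 2$ we have $ik \ge 2$, so the relation $a^{1} = a^{ik}$ with $1 \neq ik$ would force the index of $a$ to be $1$, contradicting $m_a > 1$. Hence $a \notin \langle a^{i} \rangle$, the containment is proper, and $o(x) = |\langle a^{i} \rangle| < |\langle a \rangle| = o(a)$.

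The only delicate points are the two membership characterizations used in the index argument; both follow immediately from the explicit form of $\langle a \rangle$ and $\langle a^{i} \rangle$ as $M(\cdot,\cdot)$, but they must be invoked with care. It is also worth flagging exactly where the hypothesis $m_a > 1$ is essential: it is precisely what rules out $a \in \langle a^{i} \rangle$ and thereby guarantees the \emph{strict} inequality $o(x) < o(a)$. Without it — for instance when $\langle a \rangle$ is a cyclic group, where every $a^{i}$ with $\gcd(i,r)=1$ is again a generator — one would have $o(a^{i}) = o(a)$, so this is the real obstacle that the assumption removes.
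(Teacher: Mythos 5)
Your proposal is correct and takes essentially the same route as the paper: the strict inequality $o(x) < o(a)$ is obtained exactly as in the paper's proof, by noting that $a \in \langle a^i \rangle$ would give $a = a^{ik}$ with $ik \geq 2$ and hence $m_a = 1$, so that $\langle a^i \rangle \subsetneq \langle a \rangle$. For the index bound, your single observation that $(a^i)^{m_a} \in \langle a^i \rangle \cap \mathcal{K}_a = \mathcal{K}_{a^i}$ merely merges the paper's two cases (the case $i \geq m_a$, where $m_x = 1$, and the case $i < m_a$, where the paper performs the identical computation) into one uniform step, a cosmetic streamlining rather than a genuinely different argument.
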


\begin{proof}
	Let $x \ne a \in  \langle a \rangle$. Then $x = a^i$ for some $i > 1$. Clearly, $\langle a^i \rangle \subsetneq \langle a \rangle$. Otherwise, $\langle a^i \rangle = \langle a \rangle$ gives $a = (a^i)^k$ gives $m_a = 1$; a contradiction. For $i \geq m_a$, we have $a^i \in \mathcal{K}_a$ and so $m_x = 1 \leq m_a$ (cf. Lemma \ref{ch1-index-a^i}).  If $i < m_a$, then $(a^i)^{m_a} \in \mathcal K_a \cap \langle a^i \rangle = \mathcal{K}_{a^i}$ implies $m_x \leq m_a$.
\end{proof}

\begin{proposition}\label{ch2-maximal-cliq-monoge}
	Let $S$ be a semigroup with exponent $n$ and  $C$ be a  maximal clique in $\Gamma(S)$. Then $C \subseteq \langle a \rangle$ for some $a \in S$.
\end{proposition}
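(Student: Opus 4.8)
The plan is to exploit the component structure of $\Gamma(S)$ together with the splitting of each monogenic piece into its ``tail'' and its kernel subgroup. Since $S$ has bounded exponent, every vertex has finite order and each connected component of $\Gamma(S)$ equals $S_f$ for a unique idempotent $f$ (Corollary \ref{ch2-component-finite}, Remark \ref{ch1-re-classification-compoenent}). A clique is connected, so $C \subseteq S_f$ for one idempotent $f$; thus every element of $C$ has a power equal to $f$, and the elements of $C$ of index $1$ are exactly those lying in the maximal subgroup $H_f$ (a group by Corollary \ref{ch1-H-class}). I would therefore partition $C$ into the tail part $C_{>1} = \{x \in C : m_x > 1\}$ and the group part $C \cap H_f$, treat them separately, and only then merge.

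For the tail part, I claim that any two $x, y \in C_{>1}$ are comparable under ``is a power of''. Indeed $x \sim y$ gives $\langle x, y\rangle = \langle w\rangle = M(m_w, r_w)$ with $x = w^i$, $y = w^j$; since $m_x, m_y > 1$, Lemma \ref{ch1-index-a^i}(i) forces $i, j < m_w$, and then Lemma \ref{ch2- adjacency-cond-more-than-one-index} gives $i \mid j$ or $j \mid i$, i.e. one of $x, y$ is a power of the other. By Lemma \ref{ch1-o(a^i)-in-o(a)} a proper power of an index-$>1$ element has strictly smaller order, so ``is a power of'' is a total order on $C_{>1}$ along which the order function strictly decreases; as orders are bounded by $2n$ (Lemma \ref{ch1-exponent}(i)), $C_{>1}$ is finite and has a top element $a$, whence $C_{>1} \subseteq \langle a\rangle$.

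For the group part, observe that for $u, v \in C \cap H_f$ adjacency means $\langle u, v\rangle = \langle w\rangle$ is monogenic with $u, v \in H_f \cap \langle w\rangle = \mathcal{K}_w$, a finite cyclic subgroup; hence $u, v$ generate a cyclic subgroup of $H_f$, and conversely any two elements of $H_f$ generating a finite cyclic subgroup are adjacent (Lemma \ref{ch1-cyclic implies monogenic}). Thus on $C \cap H_f$ the relation $\sim$ is exactly ``generate a cyclic subgroup'', so Lemma \ref{ch1-epg-cyclic-subgroup} applied inductively shows every finite subset of $C \cap H_f$ generates a cyclic subgroup; since $H_f$ has exponent dividing $n$, the whole of $C \cap H_f$ lies in a single finite cyclic subgroup $K \le H_f$.

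The hard part will be to merge these two conclusions into one monogenic subsemigroup. One has $C_{>1} \subseteq \langle a\rangle$ with kernel $\mathcal{K}_a \le K$ and $C \cap H_f \subseteq K$, and the goal is to produce a single $b \in S$ with $\langle b\rangle \supseteq \langle a\rangle \cup K$. I would argue by enlarging: starting from $\langle a\rangle$, for each group element $u \in (C \cap H_f)\setminus \langle a\rangle$ use the edge $u \sim a$ to pass to the witnessing monogenic subsemigroup $\langle w\rangle \supseteq \langle a\rangle$ with $u \in \mathcal{K}_w$, and iterate. Each step enlarges the monogenic subsemigroup, and since orders are bounded by $2n$ the process must stabilise at a maximal monogenic subsemigroup (Lemma \ref{ch1-exponent}(ii)); the maximality of the clique $C$ is what I expect to force the stabilised subsemigroup to contain all of $C \cap H_f$, since otherwise an uncaptured $u$ would let us properly enlarge $C$. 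The genuinely obstructive point is the bookkeeping in this enlargement: when I move from $\langle a\rangle$ to a larger $\langle w\rangle$ to absorb one group element, I must ensure the previously absorbed elements and the whole tail chain remain inside $\langle w\rangle$, i.e. that the successive witnesses can be chosen nested rather than merely each containing $\langle a\rangle$. Making this nesting precise — or replacing it by a single maximal-order (equivalently maximal-monogenic) choice and showing directly via the maximality of $C$ that every element of $C$ is a power of its generator — is where the main work lies.
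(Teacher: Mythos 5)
Your component reduction, the total ordering of the tail $C_{>1}$ under ``is a power of'' (via Lemmas \ref{ch1-index-a^i}, \ref{ch2- adjacency-cond-more-than-one-index} and \ref{ch1-o(a^i)-in-o(a)}, with finiteness from Lemma \ref{ch1-exponent}), and the containment of $C \cap H_f$ in a single finite cyclic subgroup $K \le H_f$ (via Lemma \ref{ch1-epg-cyclic-subgroup} and Lemma \ref{ch1-cyclic implies monogenic}) are all correct. But the proof is genuinely unfinished at exactly the point you flag: nothing you establish produces one $b$ with $\langle a \rangle \cup K \subseteq \langle b \rangle$, and your proposed iteration has the defect you yourself identify --- when you pass from $\langle a \rangle$ to a witness $\langle w \rangle$ to absorb one $u \in C \cap H_f$, you have no control that previously absorbed group elements (or $K$ itself) remain inside $\langle w \rangle$, and clique maximality does not obviously supply the needed nesting. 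So as written this is a gap, not a proof.

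The irony is that the merge is a one-step consequence of the very witness computation you already ran on the tail, and this is exactly the paper's route (your parenthetical guess about a ``single maximal-order choice'' is the right one). The paper sets $\mathsf{M} = \max\{m_x : x \in C\}$ and splits on $\mathsf{M}$. If $\mathsf{M} = 1$, your $K$-argument finishes (the paper instead uses maximality of $C$ and Lemma \ref{ch1-epg-cyclic-subgroup} to show $C$ is itself a cyclic subgroup, then takes a generator of maximal order; your variant avoids maximality and is fine). If $\mathsf{M} > 1$, take your top tail element $a$, so $m_a = \mathsf{M} > 1$ with $o(a)$ maximal, and let $u \in C \cap H_f$; write $\langle a, u \rangle = \langle z \rangle$ with $a = z^i$, $u = z^j$, $z = a^s u^t$, $s, t \in \mathbb{N}_0$. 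If $t = 0$ or $i = 1$, then $u \in \langle z \rangle \subseteq \langle a \rangle$. If $s, t \ne 0$, then $z = z^{si + tj}$ with $si + tj \ge 2$ gives $m_z = 1$, hence $m_a = 1$; contradiction. If $s = 0$, then $z = u^t$ lies in the finite cyclic group $\langle u \rangle$, so $\langle z \rangle$ is a group and $a = z^i$ has index $1$; contradiction. Hence every group element of $C$ already lies in $\langle a \rangle$: no iteration, no nesting bookkeeping, and no appeal to maximality of $C$ is needed in this case. This is precisely the paper's Case $\mathsf{M} > 1$, where the same trichotomy on $(s, t, i)$ is applied uniformly to every $y \in C$ (with Lemma \ref{ch1-o(a^i)-in-o(a)} ruling out $x \in \langle y \rangle$, $x \ne y$), so the tail/group decomposition you built is bypassed entirely.
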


\begin{proof}
	Let $x \in C$. Then $x^n = f$ for some $f \in E(S)$. In view of Corollary \ref{ch2-component-finite}, $x \in S_f$ and $S_f$ is a connected component of $\Gamma(S)$. Since $C$ is a clique, we get $C \subseteq S_f$. Now consider $\mathsf{M} = {\rm max}\{m_x : x \in C \}$. We prove our result through the following cases:
	
	\noindent \textbf{Case 1:} $\mathsf{M} = 1$. First we prove that $\langle C \rangle$ is a subgroup of $S$. For that, suppose $x \in C \subseteq S_f$. Then $x^n = f$ for some $n \in \mathbb N$. Since $m_x = 1$, we get $\langle x \rangle$ is a cyclic subgroup of $S$. It follows that $xf = fx = x$. Further note that for any $x, y \in C$, we have $xy = yx$. Consequently, for $a \in \langle C \rangle$, we have $a = c_1^{k_1} c_{2}^{k_2} \ldots c_n^{k_n}$, where $c_i \in C$ and $k_i \in \mathbb N$. Observe that $af = a$ so that $\langle C \rangle$ forms a monoid with the identity element $f$. Since  $a = c_1^{k_1} c_{2}^{k_2} \ldots c_n^{k_n}$, we have $ab = ba = f$, where $b = (c_n^{k_n})^{-1}  \ldots (c_1^{k_1})^{-1}$. Thus, $\langle C \rangle$ is a subgroup of $S$. Now we show that $ C$ is a cyclic subgroup of $S$ and let $x, y, z \in C$. By Lemma \ref{ch1-epg-cyclic-subgroup}, $\langle x, y, z \rangle$ is a cyclic subgroup of $\langle C \rangle$. Consequently, $x^iy^j \sim z$ for each $i, j \in \mathbb N$. It follows that $C \cup \langle x, y \rangle$ is a clique of $\Gamma(S)$. Since $C$ is a maximal clique, we must have $\langle x, y\rangle \subseteq C$. Therefore, $a \in C$ so that $\langle C \rangle \subseteq C$ gives $\langle C \rangle = C$. Thus $C$ is a subgroup of $S$. In view of Lemma \ref{ch1-exponent}, $o(x) \leq 2n$ for all $x \in C$. Choose $x \in C$ such that $o(x) \geq o(y)$ for all $y \in C$. In order to prove $C \subseteq \langle x \rangle$, let $y \in C$. Then $\langle x, y \rangle = \langle z \rangle$ for some $z \in C$ implies $y \in \langle z \rangle = \langle x \rangle$. Thus the result holds.

	\noindent \textbf{Case 2:} $\mathsf{M} > 1$. By Lemma \ref{ch1-exponent}, $o(x) \leq 2n$ for all $x \in C$. Now choose $x \in C$ with $m_x = \mathsf{M}$ and $o(x) \geq o(y)$ for all $y \in C$ such that $m_y = \mathsf{M}$. We show that $C \subseteq \langle x \rangle$. Let $y \in C$. Then $\langle x, y \rangle = \langle z \rangle$. It follows that $x = z^i$, $y = z^j$ and $z = x^u y^v$ for some $i, j \in \mathbb N$ and $u, v \in \mathbb N_0$. If either $v =0$ or $i = 1$, then observe that $y \in \langle z \rangle \subseteq \langle x \rangle $. Therefore, $C \subseteq \langle x \rangle$. We may now suppose that $v\ne 0$ and $i > 1$. If $u \ne 0$, then $z = z^{ui + vj}$ gives $m_z = 1$ and so $m_x = 1$; a contradiction because $m_x = \mathsf{M} > 1$. Consequently, we get $u  = 0$ and so $x \in \langle y \rangle$. By Lemma \ref{ch1-o(a^i)-in-o(a)}, $m_x \leq m_y$. Thus, $m_x = m_y = \mathsf{M}$ gives $o(x) \geq o(y)$. Since $x \in \langle y \rangle$, we obtain $x = y^l$ for some $l \in \mathbb N$. If $l > 1$, then $o(x) < o(y)$ (cf. Lemma \ref{ch1-o(a^i)-in-o(a)}) which is not possible. Thus, $x = y$ and hence $C \subseteq \langle x \rangle$. 
\end{proof}

\begin{theorem}\label{ch2-cliq-finiite-expo}\index{clique! number}
	Let  $S$ be a semigroup with exponent $n$. Then\\ $\omega(\Gamma(S)) = {\rm max} \left( \{r_{a} :  \; m_a = 1 \; {\rm and} \; a \in \mathcal{M} \} \cup \{\mu_k^a :  \; m_a > 1, \; 1 \leq k < m_{a} \; {\rm and} \; a \in \mathcal{M}\} \right),$
	$\mu_1^a = 1 + r_{a}$ and for $ k \geq 2$, $\mu_k^a = 1 + \nu(k)  + \frac{r_{a}}{(r_{a}, \; k)}$, where $\nu(k)$ is the number of terms in the prime factorization of $k$.  
\end{theorem}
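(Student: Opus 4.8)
The plan is to reduce the global clique number to a maximum of clique numbers of monogenic subsemigroups indexed by $\mathcal{M}$, and then invoke the monogenic computation already in hand. First I would observe that every maximum clique is in particular a maximal clique, so by Proposition \ref{ch2-maximal-cliq-monoge} any maximum clique $C$ of $\Gamma(S)$ satisfies $C \subseteq \langle b \rangle$ for some $b \in S$. By Lemma \ref{ch1-exponent}(ii) the subsemigroup $\langle b \rangle$ is contained in a maximal monogenic subsemigroup $\langle a \rangle$ with $a \in \mathcal{M}$, whence $C \subseteq \langle a \rangle$. Thus every maximum clique of $\Gamma(S)$ lives inside $\langle a \rangle$ for some $a \in \mathcal{M}$.

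The next step is to identify the induced subgraph of $\Gamma(S)$ on the vertex set $\langle a \rangle$ with the cyclic graph $\Gamma(\langle a \rangle)$. This holds because for $x, y \in \langle a \rangle$ the subsemigroup $\langle x, y \rangle$ is the same object whether it is formed inside $S$ or inside $\langle a \rangle$, so $x \sim y$ in $\Gamma(S)$ if and only if $x \sim y$ in $\Gamma(\langle a \rangle)$. Consequently each $\Gamma(\langle a \rangle)$ is an induced subgraph of $\Gamma(S)$, giving $\omega(\Gamma(\langle a \rangle)) \le \omega(\Gamma(S))$, while the previous paragraph gives $\omega(\Gamma(S)) \le \max_{a \in \mathcal{M}} \omega(\Gamma(\langle a \rangle))$. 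Combining the two inequalities yields $\omega(\Gamma(S)) = \max_{a \in \mathcal{M}} \omega(\Gamma(\langle a \rangle))$.

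It then remains to compute $\omega(\Gamma(\langle a \rangle))$ for each $a \in \mathcal{M}$, and here I split according to the index $m_a$. If $m_a = 1$, then $\langle a \rangle = M(1, r_a)$ is a cyclic group of order $r_a$, so $\Gamma(\langle a \rangle)$ is complete by Theorem \ref{gamma-complete}(i) and hence $\omega(\Gamma(\langle a \rangle)) = r_a$. If $m_a > 1$, then $\langle a \rangle = M(m_a, r_a)$ is a monogenic semigroup of index exceeding one, and Proposition \ref{ch2-clique-no-mono} gives $\omega(\Gamma(\langle a \rangle)) = \max\{\mu_k^a : 1 \le k < m_a\}$ with $\mu_1^a = 1 + r_a$ and $\mu_k^a = 1 + \nu(k) + \frac{r_a}{(r_a, k)}$ for $k \ge 2$. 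Substituting these two evaluations into the identity $\omega(\Gamma(S)) = \max_{a \in \mathcal{M}} \omega(\Gamma(\langle a \rangle))$ produces exactly the claimed formula, with the first set in the maximum arising from the $m_a = 1$ members of $\mathcal{M}$ and the second from the $m_a > 1$ members.

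The genuinely substantive work is all packaged into the two cited results: Proposition \ref{ch2-maximal-cliq-monoge}, which confines each maximal clique to a single monogenic subsemigroup, and Proposition \ref{ch2-clique-no-mono}, which evaluates the clique number in the monogenic case. Given these, the remaining argument is essentially bookkeeping. The one point that deserves care is the passage from a containment $C \subseteq \langle b \rangle$ with $b \in S$ to a containment $C \subseteq \langle a \rangle$ with $a \in \mathcal{M}$: one must ensure the enlargement supplied by Lemma \ref{ch1-exponent}(ii) does not change the adjacency relations, which is exactly the induced-subgraph identification made in the second step. I therefore expect no real obstacle beyond stating these reductions cleanly.
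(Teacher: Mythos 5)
Your proposal is correct and follows essentially the same route as the paper: it confines a maximum clique to some $\langle a \rangle$ with $a \in \mathcal{M}$ via Proposition \ref{ch2-maximal-cliq-monoge} and Lemma \ref{ch1-exponent}(ii), and then evaluates the monogenic clique numbers using Corollary \ref{ch2-Gamma(G)-complete} (equivalently Theorem \ref{gamma-complete}) when $m_a = 1$ and Proposition \ref{ch2-clique-no-mono} when $m_a > 1$. Your explicit check that $\Gamma(\langle a \rangle)$ is an induced subgraph of $\Gamma(S)$ is a point the paper leaves implicit, but it is the same argument.
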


\begin{proof}
	Let $C$ be a clique of maximum size in $\Gamma(S)$. Consider the sets 
	\[A = \{r_{a} : \; a \in \mathcal{M}, \; m_a = 1 \} \; {\rm and}  \; B = \{\mu_k^a : \; a \in \mathcal{M}, \; m_a > 1, \; 1 \leq k < m_{a}\}.\]
	We claim that $|C| \in A \cup B$. By Proposition \ref{ch2-maximal-cliq-monoge}, $C \subseteq \langle a' \rangle$ for some $a' \in S$ and $a' \in \langle a \rangle$ for some $a \in \mathcal{M}$ (cf. Lemma \ref{ch1-exponent}). Then $C \subseteq \langle a \rangle$. If $m_a = 1$, then $\langle a \rangle$ is a cyclic subgroup of $S$ and so $\Gamma(\langle a \rangle)$ is complete ( see Corollary \ref{ch2-Gamma(G)-complete}). Since $C$ is a clique of maximum size in $\Gamma(S)$ so it is of maximum size  in $\Gamma(\langle a \rangle)$ also. It follows that $C = \Gamma(\langle a \rangle)$. Consequently, we get $|C| = |\langle a \rangle| = r_{a} \in A$. Now let $m_a > 1$. Then $C$ is again a clique of maximum size  in $\Gamma(\langle a \rangle)$. By Proposition \ref{ch2-clique-no-mono}, $|C| = \mu_k^a \in B$ for some $k$, where $1 \leq k < m_{a}$. Next we provide a clique of size $t$ for each $t \in A \cup B$. If $t \in A$, then there exists $a \in \mathcal{M}$ such that $t = r_{a}$ and $m_{a} = 1$. Thus, $\langle a \rangle$ is a cyclic subgroup of $S$ and the subgraph induced by $\langle a \rangle$ is complete ( cf. Corollary \ref{ch2-Gamma(G)-complete}). We get a clique of  size $V(\Gamma(\langle a \rangle)) = o(a) = r_{a} = t$. If $t \in B$, then $t = \mu_k^a$ of some $a \in \mathcal{M}$ such that $ m_a > 1$ and $1 \leq k < m_{a}$. Since the prime factorization of $k$ is $p_1 p_2 \ldots p_s$ and by the proof of Proposition \ref{ch2-clique-no-mono}, the set $\{a, a^{p_1}, a^{p_1p_2}, \ldots, a^k\} \cup \mathcal{K}_{a^k}$ forms a clique of size $t = \mu_{k}^a$. This completes the proof.
\end{proof}

In view of {\cite[Theorem 2.5]{a.Dalal2020chromatic}}, we have the following corollary.

\begin{corollary}\label{ch2-clique-most-countable}
	The clique number of the cyclic graph  of any semigroup  is at most countable.
\end{corollary}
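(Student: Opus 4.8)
The plan is to reduce the assertion to the cited result \cite[Theorem 2.5]{a.Dalal2020chromatic}, using the component and index analysis already developed above. Fix an arbitrary clique $C$ of $\Gamma(S)$; by Proposition \ref{ch2-component-infinite} it lies inside a single connected component $C(x)$, so any two of its vertices generate a monogenic subsemigroup. The first step is the simple but decisive observation that no element of finite order can be adjacent to one of infinite order: if $\langle u, v\rangle = \langle z\rangle$ then $u$ and $v$ are both powers of $z$, which forces $u$ and $v$ to have finite order exactly when $z$ does. Hence every clique consists either entirely of elements of finite order or entirely of elements of infinite order, and I would treat these two cases separately.

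In the finite-order case, Corollary \ref{ch2-component-finite} places all of $C$ in one set $S_f$ with $f \in E(S)$, and I would partition $C$ according to whether the index of an element is one or greater than one. The index-one elements lie in the maximal subgroup $H_f$ (Corollary \ref{ch1-H-class}); for torsion group elements, adjacency means the generated finite cyclic subgroups pairwise lie in a common cyclic subgroup (Lemma \ref{ch1-cyclic implies monogenic}), so by Lemma \ref{ch1-epg-cyclic-subgroup} they constitute a clique in the cyclic graph of the group $H_f$, and \cite[Theorem 2.5]{a.Dalal2020chromatic} bounds such a clique by $\aleph_0$. The elements of index greater than one are handled exactly as in the proof of Proposition \ref{ch2-clique-no-mono}: Lemma \ref{ch2- adjacency-cond-more-than-one-index} forces their exponents to be totally ordered by divisibility, so this part is a countable chain. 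Thus a finite-order clique is at most countable.

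In the infinite-order case, any pair $u, v \in C$ generates an infinite monogenic subsemigroup $\langle z\rangle$ in which two powers $z^i, z^j$ are adjacent precisely when $i \mid j$ or $j \mid i$ (the infinite analogue of Lemma \ref{ch2- adjacency-cond-more-than-one-index}, already used implicitly in Theorem \ref{gamma-complete} and Theorem \ref{bipartite}). Consequently $C$ is totally ordered by the ``is a power of'' relation and injects into a countable set of exponents, the prototype being $\{a, a^2, a^4, \ldots\}$; hence it is a countable chain. Combining the two cases gives $|C| \le \aleph_0$ for every clique, so $\omega(\Gamma(S)) \le \aleph_0$. The main obstacle is the infinite-order case together with the verification that, in the finite-order case, adjacency between the index-one group part and the index-$(>1)$ chain part is still governed by the divisibility conditions, so that neither family---and hence neither their union---can be uncountable; once this is checked the countability of the group part is delivered by the cited theorem, and the rest reduces to the fact that each monogenic subsemigroup $\{z, z^2, \ldots\}$ is itself at most countable.
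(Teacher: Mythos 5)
Your proposal is essentially correct, but it takes a genuinely different and much longer route than the paper: the paper's entire proof \emph{is} the citation, since Theorem 2.5 of \cite{a.Dalal2020chromatic} asserts that the chromatic number $\chi(\Gamma(S))$ is at most countable for an arbitrary semigroup $S$ (compare Corollary \ref{ch2-chro-unbd-expo}), and $\omega(\Gamma(S))\leq\chi(\Gamma(S))$ yields the corollary at once. Notice that your finite-order case already invokes that very theorem for the group $H_f$; once you permit yourself that citation, you could apply it to $S$ itself and stop, so your detour is logically unnecessary --- though it does buy a self-contained structural picture (homogeneity of cliques with respect to finiteness of order, reduction of the torsion part to a single $S_f$, and the chain structure of the index-$(>1)$ and infinite-order parts) that the one-line proof hides. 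To stand on its own, your argument needs three repairs. First, comparability of an arbitrary adjacent pair is not literally Lemma \ref{ch2- adjacency-cond-more-than-one-index}, which concerns powers of one fixed generator with exponents below the index; you need the short computation that $\langle u,v\rangle=\langle z\rangle$ with $u=z^i$, $v=z^j$ gives $z=z^{\alpha i+\beta j}$ for some $\alpha,\beta$ with $\alpha+\beta\geq 1$, and $m_z>1$ (respectively $o(z)$ infinite) forces $\alpha i+\beta j=1$, i.e.\ $z\in\{u,v\}$. Second, ``totally ordered, hence countable'' is false for abstract chains --- $\omega_1$ is a chain all of whose initial segments are countable --- so you must carry out the injection you only gesture at: fix $v_0$ in the chain, note that everything below $v_0$ lies in the countable set $\langle v_0\rangle$, and for $w$ above $v_0$ the unique exponent $k_w$ with $v_0=w^{k_w}$ is strictly monotone along the chain (if $w'=w^m$ then $k_w=mk_{w'}$); in the finite-order case one can instead observe that $o$ is injective on the chain by Lemma \ref{ch1-o(a^i)-in-o(a)}. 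Third, your closing ``main obstacle'' about adjacency between the index-one part and the index-$(>1)$ part is a red herring: a union of two at most countable sets is at most countable regardless of the edges between them, so no divisibility analysis across the two parts is needed.
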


\begin{theorem}\label{ch2-clique-unbd-expo}
	Let $S$ be a semigroup of unbounded exponent. Then $\omega(\Gamma(S))$ is countably infinite.
\end{theorem}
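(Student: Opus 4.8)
The plan is to show two things: that $\omega(\Gamma(S))$ is infinite, and that it is at most countable (the latter already being supplied by Corollary~\ref{ch2-clique-most-countable}, which holds for \emph{any} semigroup). So the entire content is to produce, inside a semigroup of unbounded exponent, a clique that is infinite. Since ``unbounded exponent'' means there is no uniform $n$ with $x^n \in E(S)$ for all $x$, I would first extract from this hypothesis a single element whose monogenic subsemigroup is infinite, and then exploit the fact that an infinite monogenic semigroup contains an infinite clique.

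First I would argue that unbounded exponent forces the existence of an element $a \in S$ with $o(a)$ infinite, i.e. $\langle a \rangle$ is an infinite monogenic semigroup isomorphic to $(\mathbb{N}, +)$. The point is that if every element had finite order, then every $\langle x \rangle = M(m_x, r_x)$ would contain an idempotent $x^{m_x + g_x} = f$; the worry is only that the exponents could still be unbounded even with all orders finite. So the cleaner route is a direct dichotomy: either some element has infinite order (done), or all elements have finite order but the indices/periods are unbounded. I would need to rule out the second horn, or rather show it still yields an infinite clique. The key fact to invoke is Lemma~\ref{ch1-exponent}(i): if $S$ had \emph{bounded} exponent $n$, then $o(x) \le 2n$ for all $x$, so the set $\pi(S)$ of orders would be bounded. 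Contrapositively, unbounded exponent must produce elements of arbitrarily large finite order or an element of infinite order.

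Once I have an element $a$ of infinite order, I claim $\langle a \rangle = \{a, a^2, a^3, \ldots\}$ itself is an infinite clique. Indeed for any $i < j$ we have $a^j = (a^i)^{j/i}$ when $i \mid j$, but in general $\langle a^i, a^j \rangle = \langle a^{\gcd(i,j)} \rangle$ is monogenic, so $a^i \sim a^j$ for \emph{all} pairs. This gives a countably infinite clique directly, and combined with Corollary~\ref{ch2-clique-most-countable} yields $\omega(\Gamma(S)) = \aleph_0$. If instead I must work with the ``all orders finite but unbounded'' horn, I would pick elements $a_n$ with $o(a_n) \to \infty$ and use Theorem~\ref{ch2-cliq-finiite-expo}'s formula: the quantities $r_{a_n}$ (or $\mu_k^{a_n}$) grow without bound, so $\omega(\Gamma(S))$ exceeds every finite value, hence is infinite, hence (by Corollary~\ref{ch2-clique-most-countable}) exactly countably infinite.

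The main obstacle is handling the second horn cleanly: unbounded exponent does not immediately hand me an infinite-order element, only that no uniform bound on the idempotent-power exists. I expect the decisive step to be a careful application of Lemma~\ref{ch1-exponent}(i) in contrapositive form to conclude that either $\pi(S)$ is unbounded (giving cliques of unbounded finite size via Theorem~\ref{ch2-cliq-finiite-expo}) or there is a genuinely infinite-order element (giving an honest infinite clique in a single $\langle a \rangle$). Once that dichotomy is established, each branch produces a clique larger than any prescribed integer, so $\omega(\Gamma(S))$ is infinite, and countability from Corollary~\ref{ch2-clique-most-countable} pins it down to $\aleph_0$.
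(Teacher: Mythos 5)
Your proposal breaks at two places, one mathematical and one logical. The mathematical error is the claim that an infinite-order element $a$ makes all of $\langle a \rangle$ a clique because ``$\langle a^i, a^j \rangle = \langle a^{\gcd(i,j)} \rangle$ is monogenic.'' In a semigroup only \emph{positive} powers are available, so $\langle a^i, a^j \rangle = \{a^{ui+vj} : u,v \geq 0,\; u+v \geq 1\}$, which in general does not contain $a^{\gcd(i,j)}$: for instance $\langle a^2, a^3 \rangle = \{a^2, a^3, a^4, \ldots\}$ is not monogenic when $o(a)$ is infinite, so $a^2 \nsim a^3$. This is exactly the observation the paper itself uses in the proof of Theorem \ref{gamma-complete}, and the proof of Theorem \ref{ch2-independence-monogenic} shows that in an infinite monogenic semigroup $a^i \sim a^j$ (for $i<j$) holds \emph{iff} $i \mid j$ --- which is why $\{a^p : p \text{ prime}\}$ is an infinite \emph{independent} set there. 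The branch is salvageable, but only via a divisibility chain such as $\{a, a^2, a^4, a^8, \ldots\}$, not via the whole of $\langle a \rangle$.

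The logical error is in how you set up the dichotomy. Lemma \ref{ch1-exponent}(i) says: bounded exponent implies bounded orders. Its contrapositive is ``unbounded orders implies unbounded exponent,'' whereas what your second horn needs is the \emph{converse}, ``unbounded exponent implies unbounded orders,'' which the lemma does not give. That implication is true, but it needs its own argument: if $o(x) \leq B$ for all $x \in S$, then $m_x \leq B$ and $r_x \mid B!$, so $x^{B!} \in E(S)$ for every $x$ and $S$ would have bounded exponent. This factorial trick is precisely the engine of the paper's proof, which avoids your case split entirely: for each $k$ it takes $n = k!$ to produce a single element $a$ with no idempotent among $a, a^2, \ldots, a^k$, and then $\{a, a^2, a^4, \ldots, a^{2^{\lfloor \log_2 k \rfloor}}\}$ is a clique of size $\lfloor \log_2 k \rfloor + 1$, which together with Corollary \ref{ch2-clique-most-countable} handles both of your horns uniformly. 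Two further small repairs in your second horn: Theorem \ref{ch2-cliq-finiite-expo} is stated only for semigroups of finite exponent, so you cannot apply it to $S$ itself --- apply Proposition \ref{ch2-clique-no-mono} to the finite monogenic subsemigroup $\langle a_n \rangle$ instead (its cliques are cliques of $\Gamma(S)$); and $r_{a_n}$ need not grow at all --- if only the indices $m_{a_n}$ grow, the clique size grows merely like $\nu(2^j) + 2 \approx \log_2 m_{a_n}$, which still suffices but is what actually has to be said.
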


\begin{proof}
	In view of Corollary \ref{ch2-clique-most-countable}, to prove the result, we show that for $k \in \mathbb N$ there exists a clique of size $\left\lfloor \log_2 k \right \rfloor + 1$. We claim that: there exists $a \in S$ such that $a, a^2, \ldots, a^k$ are non idempotent elements of $S$. Let, if possible, there exists $i_a \leq k$ such that  $a^{i_a} = f$ for some $f \in E(S)$. Now choose $n = k!$. Note that $a^n = (a^{i_a})^{23\cdots (i_a -1)(i_a + 1) \cdots k} = f$. Thus, $S$ is of bounded exponent; a contradiction. This proves the claim. By Lemma \ref{ch2- adjacency-cond-more-than-one-index}, note that the sets $\{a, a^2, a^4, \ldots, a^{2^{\left\lfloor \log_2 k \right \rfloor}}\}$ forms a clique of size $\left\lfloor \log_2 k \right \rfloor + 1$. This completes our proof.
\end{proof}

In view of Theorem \ref{ch2-clique-unbd-expo}, we have the following corollary.

\begin{corollary}\label{ch2-chro-unbd-expo}
Let $S$ be a semigroup of unbounded exponent. Then $\chi(\Gamma(S))$ is countably infinite.
\end{corollary}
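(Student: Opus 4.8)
The plan is to sandwich $\chi(\Gamma(S))$ between $\omega(\Gamma(S))$ and $\aleph_0$ and then invoke Theorem \ref{ch2-clique-unbd-expo}. Recall the universally valid inequality $\omega(\mathcal{G}) \leq \chi(\mathcal{G})$, which holds because the vertices of any clique must receive pairwise distinct colours in a proper colouring. Since Theorem \ref{ch2-clique-unbd-expo} asserts that $\omega(\Gamma(S))$ is countably infinite, the graph $\Gamma(S)$ contains cliques of every finite size; consequently no finite palette can properly colour it, and the inequality immediately yields the lower bound $\chi(\Gamma(S)) \geq \aleph_0$.

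The substantive point is the matching upper bound $\chi(\Gamma(S)) \leq \aleph_0$, and here I would proceed exactly as in the proof of Corollary \ref{ch2-clique-most-countable}, namely through \cite[Theorem 2.5]{a.Dalal2020chromatic}, which ties the chromatic number of $\Gamma(S)$ to its clique number for an arbitrary semigroup $S$. Combined with the fact that $\omega(\Gamma(S))$ is at most countable (Corollary \ref{ch2-clique-most-countable}) and is in fact equal to $\aleph_0$ here, this forces $\chi(\Gamma(S)) \leq \aleph_0$. Putting the two bounds together gives $\chi(\Gamma(S)) = \aleph_0$, that is, $\chi(\Gamma(S))$ is countably infinite. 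Should one prefer a self-contained argument for the upper bound, I would exploit the component structure of Proposition \ref{ch2-component-infinite}: the connected components of $\Gamma(S)$ are the pairwise disjoint sets $C(x)$, so a single countable palette may be reused across all of them, reducing the task to colouring one component; since every vertex of a component lies in a monogenic subsemigroup and each such subsemigroup is at most countable, the powers of a fixed generator can be coloured greedily.

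The hard part is precisely this upper bound. Because $S$, and hence the vertex set of $\Gamma(S)$, may be uncountable, the estimate $\chi \leq \aleph_0$ cannot simply be read off from the cardinality of the vertex set and genuinely requires the structural input that the chromatic number does not exceed the (countable) clique number. The lower bound, by contrast, is a formal consequence of $\omega \leq \chi$ together with Theorem \ref{ch2-clique-unbd-expo}, and so presents no difficulty.
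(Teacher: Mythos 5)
Your main argument is correct and is essentially the paper's own (implicit) proof: the paper derives this corollary directly from Theorem \ref{ch2-clique-unbd-expo} together with \cite[Theorem 2.5]{a.Dalal2020chromatic}, which gives $\chi(\Gamma(S)) \leq \aleph_0$ for an arbitrary semigroup, exactly the sandwich $\aleph_0 = \omega(\Gamma(S)) \leq \chi(\Gamma(S)) \leq \aleph_0$ you describe (note that the cited theorem bounds $\chi$ directly, rather than ``tying'' it to $\omega$ --- it is the source, via $\omega \leq \chi$, of Corollary \ref{ch2-clique-most-countable}, not a consequence of it). Your optional self-contained sketch for the upper bound, however, would not survive scrutiny as stated --- a component $C(x)$ may be uncountable, and colouring each (countable) monogenic subsemigroup separately does not yield a consistent proper colouring since adjacency crosses between overlapping monogenic subsemigroups --- but as you rely on the citation for the actual proof, this does not affect correctness.
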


\section{The Independence Number of $\Gamma(S)$}\label{ch2-sec-inde}
In this section, we investigate the independence number $\alpha(\Gamma(S))$ of $\Gamma(S)$. First, we obtain $\alpha(\Gamma(S))$ for a monogenic semigroup $S$ in the following theorem.

\begin{theorem}\label{ch2-independence-monogenic}
	Let $S = \langle a \rangle$ be a monogenic semigroup. Then the independence number\index{independence number} of $\Gamma(S)$ is given below:
	\[\alpha(\Gamma(S)) = \left\{ \begin{array}{ll}
		\infty & \; {\rm if} \;  S \; {\rm  is \; infinite};\\
		
		1 & \; {\rm if} \;  S \; {\rm  is \;  finite \; and} \; m_a = 1;\\
		
		\left\lfloor \frac{m_a}{2} \right\rfloor + 1 & \; {\rm if} \; m_a > 1, \;  (i, r_a) > 1 \; {\rm for \; all} \;  i, \; {\rm  where} \;  \left\lceil \frac{m_a}{2} \right\rceil \leq i \leq m_a-1; 	\vspace{0.1cm}\\

		\left\lfloor \frac{m_a}{2} \right\rfloor & \; {\rm if} \; m_a > 1, \;  (i, r_a) = 1\; {\rm for \; some}\; i, {\rm where} \;  \left\lceil \frac{m_a}{2} \right\rceil \leq i \leq m_a -1.
	\end{array} \right. \]
\end{theorem}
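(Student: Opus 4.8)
The plan is to reduce the computation of $\alpha(\Gamma(S))$ to a purely combinatorial question about antichains in the divisibility order, using the adjacency descriptions already available. Throughout write $m = m_a$ and $r = r_a$. The infinite case is quick: if $S$ is infinite then, by the same reasoning as in Lemma \ref{ch2- adjacency-cond-more-than-one-index}, two powers $a^i, a^j$ with $i < j$ are adjacent if and only if $i \mid j$, so $\{a^p : p \text{ prime}\}$ is an infinite antichain and $\alpha(\Gamma(S)) = \infty$. If $S$ is finite with $m = 1$, then $S = \mathcal{K}_a$ is a cyclic group, so it has the form $M(1,r)$ and $\Gamma(S)$ is complete by Theorem \ref{gamma-complete}, whence $\alpha(\Gamma(S)) = 1$. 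So assume $S = M(m,r)$ is finite with $m > 1$.

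First I would record the local structure of any independent set $I$ with $|I| \ge 2$. Since $N[a] = S$, the dominating vertex $a = a^1$ cannot lie in $I$. The elements of index one form $\mathcal{K}_a$, which is a cyclic subgroup and hence a clique, so $|I \cap \mathcal{K}_a| \le 1$. For the elements of index more than one, Lemma \ref{ch1-index-a^i} identifies them as the $a^i$ with $i \le m-1$, and Lemma \ref{ch2- adjacency-cond-more-than-one-index} shows that $a^i, a^j$ (with $2 \le i, j \le m-1$) are non-adjacent exactly when neither of $i,j$ divides the other; thus $I \cap \{a^2, \dots, a^{m-1}\}$ corresponds to an antichain $A \subseteq \{2, \dots, m-1\}$ in the divisibility order. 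Finally, for $i < m$ the neighbours of $a^i$ inside $\mathcal{K}_a$ are precisely $\mathcal{K}_{a^i} = \langle a^i\rangle \cap \mathcal{K}_a$ (the forward implication being the computation in the proof of Lemma \ref{ch2- adjacency-cond-more-than-one-index}(i)$\Rightarrow$(ii), the reverse being clear), and by Proposition \ref{ch1-size-Ka^i} this is the subgroup of $\mathcal{K}_a \cong \mathbb Z_r$ of index $(i,r)$, which is proper precisely when $(i,r) > 1$. This decomposition yields at once the uniform bound $|I| \le \lfloor m/2\rfloor + 1$: the chain cover of $\{2, \dots, m-1\}$ by the sets $\{b, 2b, 4b, \dots\}$, one for each odd $b \le m-1$, has exactly $\lfloor m/2\rfloor$ chains, so any antichain $A$ satisfies $|A| \le \lfloor m/2\rfloor$.

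The core of the argument is deciding whether a $\mathcal{K}_a$-vertex can be added. Here I would invoke the standard fact that a finite cyclic group is never the union of its proper subgroups: a generator of $\mathcal{K}_a$ lies in no $\mathcal{K}_{a^i}$ with $(i,r) > 1$. Hence an antichain $A$ can be extended by some vertex of $\mathcal{K}_a$ if and only if $(i,r) > 1$ for every $i \in A$. Taking $A = D := \{i : \lceil m/2\rceil \le i \le m-1\}$, the index-two indices of Lemma \ref{ch1-index-a^i}(ii), which form a maximum antichain of size $\lfloor m/2\rfloor$, settles the third case: if $(i,r) > 1$ for all such $i$, then $D$ together with a generator of $\mathcal{K}_a$ is independent, so $\alpha(\Gamma(S)) = \lfloor m/2\rfloor + 1$.

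The remaining and hardest step is the matching upper bound in the last case, where some top-half index $i^*$ satisfies $(i^*, r) = 1$. Suppose towards a contradiction that $|I| = \lfloor m/2\rfloor + 1$; then $A$ is a maximum antichain all of whose elements must satisfy $(i,r) > 1$, for otherwise the chosen $\mathcal{K}_a$-vertex would be adjacent to one of them. I would contradict this by showing that in this case every maximum antichain in $\{2, \dots, m-1\}$ contains an element coprime to $r$. Such an antichain meets each of the $\lfloor m/2\rfloor$ chains in exactly one element, so it suffices to examine the chain $C$ containing $i^*$. As $i^*$ lies in the top half, $2 i^* > m-1$, so $i^*$ is the largest element of $C$. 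If $i^*$ is odd then $C = \{i^*\}$ and $A$ must contain $i^*$; if $i^*$ is even then $(i^*,r)=1$ forces $r$ odd, whence every element of $C$ is coprime to $r$, so the representative of $C$ in $A$ is coprime to $r$. Either way $A$ contains an element coprime to $r$, the desired contradiction, giving $\alpha(\Gamma(S)) = \lfloor m/2\rfloor$; the degenerate case $m = 2$ (top-half index $1$, answer $1 = \lfloor m/2\rfloor$) is handled separately. The main obstacle throughout is exactly this last combinatorial claim—that coprimality of one top-half index propagates to every maximum antichain—which is why the chain decomposition by odd parts is the essential device.
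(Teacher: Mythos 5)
Your proposal is correct, and its skeleton matches the paper's: the same case division, the same extremal configurations (the top-half indices $\lceil m_a/2\rceil\le i\le m_a-1$ as the basic independent set, augmented by a generator of $\mathcal{K}_a$ exactly when $(i,r_a)>1$ for every top-half $i$), and the same inputs (Lemma \ref{ch2- adjacency-cond-more-than-one-index} and its extension to neighbours inside $\mathcal{K}_a$, Proposition \ref{ch1-size-Ka^i}). The genuine divergence is in the hardest step. The paper bounds the index-$>1$ part via the doubling injection $i_s\mapsto i_s2^{\alpha_s}$, and in its Case 2 it only notes that a $\mathcal{K}_a$-vertex forces $a^{i}\notin\mathcal{I}''$ and appeals to ``the similar argument''; as written this is compressed, since the injection could still send a lower-half member of $\mathcal{I}''$ onto the slot $i$, and excluding that needs the extra observation that such a member would divide $i$, hence be coprime to $r_a$, hence adjacent to the chosen $\mathcal{K}_a$-vertex. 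Your chain cover by $\{b,2b,4b,\dots\}$ over odd $b$ is the same counting device in Mirsky--Dilworth form, but it closes exactly this point: every maximum antichain meets the chain through $i^*$ once, and all elements of that chain are coprime to $r_a$ (singleton chain if $i^*$ is odd; $r_a$ odd if $i^*$ is even), so no maximum antichain avoids coprime indices. Your route thus buys a fully rigorous Case-2 endgame, plus an explicit treatment of the degenerate $m_a=2$ case, at essentially no extra cost; the paper's version is shorter but leans on a step that does not quite carry the load as stated.
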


\begin{proof}
	Suppose that $S$ is an infinite semigroup. We first claim that for $i < j$, we have $a^i \sim a^j$ in $\Gamma(S)$ if and only if $i \mid j$. If $i \mid j$, then clearly $a^j \in \langle a^i \rangle$ so that $a^i \sim a^j$ in $\Gamma(S)$. On the other hand, if  $a^i \sim a^j$, then $\langle a^i, a^j  \rangle = \langle a^k \rangle$ for some $a^k \in S$. Thus, $a^i = a^{tk}, \; a^j = a^{t'k}$  and $a^k = a^{ui} + a^{vj}$ for some $t, t' \in \mathbb N$ and $u, v \in \mathbb N_0$. Therefore, $a^k = a^{ui + vj} = a^{(tu + t'v)k}$. If $tu + t'v \ne 1$, then $m_a \leq k$ which is not possible as $S$ is an infinite semigroup. Thus, $tu + t'v = 1$. Further note that both $t, t'$ can not be $1$. Otherwise, $a^i = a^j$ which is not possible. It follows that either $t > 1$ or $t' > 1$. If $t > 1$ then $u = 0$ as $tu + t'v = 1$.  We get $a^i \in \langle a^j \rangle$ so that $a^i = a^{jl}$ for some $l \in \mathbb N$. For $i \ne jl$, we have $m_a \leq i$; a contradiction. Consequently, $i = jl$ and so $j \mid i$ which is not possible as $i < j$. If $t' > 1$, then by the similar argument used above, we get $j = il'$ for some $l' \in \mathbb N$. Thus, $i \mid j$. Consequently, the set $\{a^p : \; p \; {\rm is \; a \; prime} \}$ is an independent in $\Gamma(S)$ so that $\alpha(\Gamma(S)) = \infty$. 
	
	Now we prove our result, when $S$ is finite. If $m_a = 1$, then $S = \langle a \rangle$ is a cyclic group and therefore by Corollary \ref{ch2-Gamma(G)-complete}, $\Gamma(S)$ is complete. It follows that $\alpha (\Gamma(S)) = 1$. We now assume that $m_a > 1$. Consider the set 
	\[\mathcal I = \{a^i : \;  \left\lceil \frac{m_a}{2} \right \rceil\leq i \leq m_a-1 \}.\]
	
	By Lemma \ref{ch2- adjacency-cond-more-than-one-index}, $\mathcal{I}$ is an  independent set of size $\left\lfloor \frac{m_a}{2} \right\rfloor$. Further, we split our proof in two cases:
	
	\noindent \textbf{Case 1:} $(i, r_a) > 1 \; {\rm for \; all} \;  i, \; {\rm  where} \;  \left\lceil \frac{m_a}{2} \right\rceil \leq i \leq m_a-1$. Then $|K_{a^i}| < r_a$ (cf. Proposition \ref{ch1-size-Ka^i}).  Since $\mathcal{K}_a = \langle a^{m_a +g} \rangle$ for some $g$, where  $0 \leq g \leq r_a - 1$ and $m_a + g \equiv 1 ({\rm mod} \; r_a)$. Note that  $\mathcal{I}\cup \{a^{m_a +g}\}$ is an independent set. If $a^{m_a +g} \sim a^i$ for some $ i, \; {\rm  where} \;  \left\lceil \frac{m_a}{2} \right\rceil \leq i \leq m_a-1$, then $a^{m_a + g} \in \langle a^i \rangle$ (see proof of Lemma \ref{ch2- adjacency-cond-more-than-one-index} (i) $\Rightarrow$ (ii)). Thus, $\mathcal{K}_a \subseteq \mathcal{K}_{a^i}$ and so $\mathcal{K}_a = \mathcal{K}_{a^i}$, which is a contradiction of $|\mathcal{K}_{a^i}| < r_a$. To prove our result, in this case, we show that if $\mathcal{I'}$ is an arbitrary independent set in $\Gamma(S)$ then $|\mathcal{I'}| \leq \left\lfloor \frac{m_a}{2} \right\rfloor + 1 $. Since the subgraph induced by $\mathcal{K}_a$ is complete, we get $|\mathcal{K}_a \cap \mathcal{I'}| \leq 1$. Without loss of generality, consider the set \[\mathcal{I'} \cap (S \setminus \mathcal{K}_a) = \{a^{i_1}, a^{i_2}, \ldots, a^{i_t}, a^{i_{t + 1}}, \ldots, a^{i_l}\},\] where $i_1 < i_2 < \cdots < i_t < \frac{m_a}{2}$ and $\frac{m_a}{2} \leq i_{t + 1} < i_{t + 2} < \cdots < i_l < m_a$. For each $i_s \in \{i_1, i_2, \ldots, i_t\}$, we have $2 i_s < m_a$. Now choose the smallest natural number $\alpha_s$ such that $i_s 2^{\alpha_s + 1} \geq m_a$. Then $\frac{m_a}{2} \leq i_s 2^{\alpha_s} < m_a$. We claim that if $i_{s_1} \ne i_{s_2}$ then $i_{s_1} 2^{\alpha_{s_1}} \ne i_{s_2} 2^{\alpha_{s_2}}$. If $i_{s_1} 2^{\alpha_{s_1}} = i_{s_2} 2^{\alpha_{s_2}}$, then clearly $\alpha_{s_1} \ne \alpha_{s_2}$. Without loss of generality, we assume that $\alpha_{s_1} > \alpha_{s_2}$. Thus, $i_{s_1} (2^{\alpha_{s_1} - \alpha_{s_2}}) = i_{s_2}$ implies $i_{s_1} \mid i_{s_2}$. By Lemma \ref{ch2- adjacency-cond-more-than-one-index}, $a^{i_{s_1}} \sim a^{i_{s_2}}$; a contradiction of the fact that $\mathcal{I'}$ is an independent set. Moreover, for each $i_s \in \{i_1, i_2, \ldots, i_t\}$, $a^{i_s} \sim a^{i_s 2^{\alpha_s}}$ and $ a^{i_s 2^{\alpha_s}} \in \mathcal{I}$ but $ a^{i_s 2^{\alpha_s}}$ can not be in $\mathcal{I'}$. Thus, we have $|\mathcal{I'} \cap (S \setminus \mathcal{K}_a)| \leq t + \left\lfloor \frac{m_a}{2} \right\rfloor - t = \left\lfloor \frac{m_a}{2} \right\rfloor$.

	\noindent \textbf{Case 2:} $(i, r_a) = 1 \; {\rm for \; some} \;  i, \; {\rm  where} \;  \left\lceil \frac{m_a}{2} \right\rceil \leq i \leq m_a-1$. Then $|\mathcal{K}_{a^i}|  = r_a = |\mathcal{K}_a|$ and so $\mathcal{K}_{a^i} = \mathcal{K}_a$. Thus, $\mathcal{K}_a \subseteq \langle a^i \rangle$. It follows that $a^i \sim x$ for all $x \in \mathcal{K}_a$. Now let $j < \frac{m_a}{2}$. Then by the similar argument used in \textbf{Case 1}, we get $a^j \sim a^{j 2^{\alpha_j}}$, where $ a^{j 2^{\alpha_j}} \in \mathcal{I}$. Consequently, $\mathcal{I}$ is a maximal independent set. Now to prove $\mathcal{I}$ is an independent set of maximum size, we assume that $\mathcal{I''}$ is an independent set different from $\mathcal{I}$. Since the subgraph induced by $\mathcal{K}_a$ is complete, we get $|\mathcal{I''} \cap \mathcal{K}_a| \leq 1$. Also,  by the similar argument used in \textbf{Case 1}, we get $|\mathcal{I''} \cap (S \setminus \mathcal{K}_a)| \leq \left\lfloor \frac
	{m_a}{2} \right\rfloor$. If $|\mathcal{I''} \cap \mathcal{K}_a| = 0$, then $|\mathcal{I''}| \leq \left\lfloor \frac{m_a}{2} \right\rfloor$. If $|\mathcal{I''} \cap \mathcal{K}_a| = 1$, then there exists $a^j \in \mathcal{I''} \cap \mathcal{K}_a$. Since $\mathcal{K}_a = \mathcal{K}_{a^i}$, we get $a^j \in \mathcal{K}_{a^i} = \langle a^i \rangle \cap \mathcal{K}_a$. It follows that $a^j \sim a^i$ and so $a^i \in \mathcal{I}$ but $a^i \notin \mathcal{I''}$. Again by the similar argument used in \textbf{Case 1}, we get $|\mathcal{I''}| \leq \left\lfloor \frac{m_a}{2} \right\rfloor$. Thus, $\mathcal{I}$ becomes an independent set of maximum size $\lfloor \frac{m_a}{2} \rfloor$. This completes our proof.
\end{proof}

The following lemma will be useful in the sequel.

\begin{lemma} [{\cite[Lemma 2.10]{a.Dalal2020chromatic}}] \label{index-one}
	For $k > 1$, the set $I_k = \{x \in S \; : \; m_x = k \}$ is independent in $\Gamma(S)$.
\end{lemma}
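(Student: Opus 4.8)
The statement to prove is Lemma~\ref{index-one}: for $k > 1$, the set $I_k = \{x \in S : m_x = k\}$ is independent in $\Gamma(S)$.

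The plan is to argue by contradiction: suppose two distinct elements $x, y \in I_k$ are adjacent in $\Gamma(S)$, so that $\langle x, y \rangle = \langle z \rangle$ is monogenic for some $z \in S$. The goal is to show this forces $m_x < k$ or $m_y < k$, contradicting the assumption that both have index exactly $k$. First I would record the generation relations: since $x, y \in \langle z \rangle$, we may write $x = z^i$ and $y = z^j$ for some $i, j \in \mathbb{N}$, and since $z \in \langle x, y \rangle$, we may write $z = x^u y^v$ for some $u, v \in \mathbb{N}_0$. This is exactly the setup already exploited in the proofs of Lemma~\ref{ch2- adjacency-cond-more-than-one-index} and Proposition~\ref{ch2-maximal-cliq-monoge}, so I can borrow that machinery.

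The key step is to control the indices of $x$ and $y$ inside the monogenic semigroup $\langle z \rangle = M(m_z, r_z)$. By Lemma~\ref{ch1-index-a^i}, the index $m_{z^i}$ of a power $z^i$ is governed by the size of $i$ relative to $m_z$: in particular $m_{z^i} = 1$ once $i \geq m_z$, and more generally $m_{z^i}$ decreases as $i$ grows. Applying Lemma~\ref{ch1-o(a^i)-in-o(a)} with the base element $z$, any proper power $z^i$ with $i > 1$ satisfies $m_{z^i} \leq m_z$, and if $m_z > 1$ this remains an inequality that I can track. Without loss of generality assume $i \leq j$. I would then distinguish whether $i = 1$ (so $x = z$, whence $\langle z \rangle = \langle x \rangle$ and $y = z^j \in \langle x \rangle$, forcing $m_y \leq m_x = m_z$ with strict drop unless $j = 1$, i.e.\ unless $x = y$) from $i > 1$. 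The crucial observation is that $x$ and $y$ cannot both be equal to the generator's ``top'' power; since they are distinct, at least one of them is a strictly larger power of $z$, and by the monotonicity in Lemma~\ref{ch1-o(a^i)-in-o(a)} its index is strictly smaller than that of $z$, while the other has index at most $m_z$. This yields $\min(m_x, m_y) < \max(m_x, m_y)$ unless $x = y$, contradicting $m_x = m_y = k$.

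The main obstacle I anticipate is handling the case analysis on whether $u = 0$, $v = 0$, or both are nonzero in the relation $z = x^u y^v = z^{ui + vj}$, exactly as in Proposition~\ref{ch2-maximal-cliq-monoge}. If both $u, v \neq 0$ then $z = z^{ui+vj}$ with $ui + vj > 1$ forces $m_z = 1$, and then every power of $z$ has index $1$, contradicting $m_x = k > 1$; so the substantive case is when one exponent vanishes, say $u = 0$, giving $x = z^i \in \langle y \rangle$ (or symmetrically $y \in \langle x \rangle$). Once I have one of $x, y$ lying in the monogenic subsemigroup generated by the other, Lemma~\ref{ch1-o(a^i)-in-o(a)} applied to that generator delivers the strict decrease in order (hence the drop in index, via Lemma~\ref{ch1-index-a^i}) whenever the elements are distinct, completing the contradiction. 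The only care needed is to confirm that the subcase $i = 1$ (generator itself appearing) is absorbed cleanly, which it is since then $\langle x \rangle = \langle z \rangle$ contains $y$ and the same order/index comparison applies.
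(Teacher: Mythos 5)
The paper never proves Lemma~\ref{index-one} internally; it is imported verbatim from \cite[Lemma 2.10]{a.Dalal2020chromatic}, so your attempt can only be measured against the standard argument, which is exactly the machinery this paper runs in Lemma~\ref{ch2- adjacency-cond-more-than-one-index} and Proposition~\ref{ch2-maximal-cliq-monoge}. Your skeleton matches it and is sound: from $x \sim y$ write $x = z^i$, $y = z^j$, $z = x^u y^v = z^{ui + vj}$ (the collapse to $x^u y^v$ is legitimate since $x$ and $y$ commute as powers of $z$), and split into $ui + vj \geq 2$, which forces $m_z = 1$ and hence $m_x = m_{z^i} = 1$ by Lemma~\ref{ch1-index-a^i}(i), impossible for $k > 1$; versus $ui + vj = 1$, which forces $z \in \{x, y\}$, so that one of the two elements is a proper power of the other. (A word you omit but should add: $m_x = k$ finite forces the powers of $x$, hence of $z$, to repeat, so $m_z$ and $r_z$ exist.)

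The one step you have not actually justified is the strictness of the index drop in the second case. When $x = y^l$ with $l \geq 2$, Lemma~\ref{ch1-o(a^i)-in-o(a)} gives only the non-strict inequality $m_x \leq m_y$ (the strict conclusion there concerns the \emph{order}, not the index), and your parenthetical ``strict decrease in order, hence the drop in index, via Lemma~\ref{ch1-index-a^i}'' is a non sequitur: $o(x) < o(y)$ is in general compatible with $m_x = m_y$ (in a cyclic group every element has index $1$ while orders vary), and Lemma~\ref{ch1-index-a^i} characterizes only indices $1$ and $2$. The fact you need is that for $l \geq 2$ and $m_y > 1$ one has
\begin{equation*}
m_{y^l} = \left\lceil \frac{m_y}{l} \right\rceil \leq \left\lceil \frac{m_y}{2} \right\rceil < m_y,
\end{equation*}
which follows directly from the definition: $(y^l)^t = (y^l)^{t + q}$ for some $q$ (take $q$ with $r_y \mid lq$) precisely when $lt \geq m_y$, so the index of $y^l$ is $\lceil m_y / l \rceil$. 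With this one-line computation inserted, both subcases ($z = x$, giving $y = x^j$ with $j \geq 2$ and hence $m_y = \lceil k/j \rceil < k$; and $z = y$ symmetrically) contradict $m_x = m_y = k$, and your proof closes. So the route is the right one; only this strictness inference, as written, would not survive refereeing.
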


Now we determine a lower and upper bound of $\alpha(\Gamma(S))$, when $S$ is a semigroup of exponent $n$. Consider  a relation $\tau$ on $S$ defined by $x \; \tau \; y$ if and only if $\langle x \rangle = \langle y \rangle$. Clearly $\tau$ is an equivalence relation. Let $X$ be a complete set of distinct representative elements for $\tau$. Now, let $I_2 = \{x \in S : \; m_x = 2 \}$ and
$J_2 = \{a \in \overline{\mathcal M} \cap X : \; a \notin \langle x \rangle \; {\rm for \; any} \; x \in I_2 \}$.

\begin{theorem}\label{ch2-inde-bound}
	Let $S$ be a  semigroup with exponent $n$ and $\mathcal{M}$ is finite. Then
	\[|I_2| + |J_2| \leq \alpha(\Gamma(S)) \leq |J_2| + \mathop{\sum}_{a \in \mathcal{M}} \left\lfloor \frac{m_{a}}{2} \right\rfloor. \]  
\end{theorem}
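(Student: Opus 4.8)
The plan is to establish the two inequalities separately by exhibiting an explicit independent set for the lower bound, and by partitioning an arbitrary independent set along the connected components $S_f$ and bounding its intersection with each maximal monogenic subsemigroup for the upper bound.

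First I would prove the lower bound $|I_2| + |J_2| \le \alpha(\Gamma(S))$ by showing that $I_2 \cup J_2$ is itself an independent set. By Lemma \ref{index-one} the set $I_2 = \{x : m_x = 2\}$ is independent, so it remains to check that no edge appears within $J_2$ and no edge joins $J_2$ to $I_2$. For the first, elements of $J_2 \subseteq \overline{\mathcal M} \cap X$ generate \emph{distinct maximal} cyclic subgroups (they are representatives under $\tau$ lying in $\overline{\mathcal M}$); if $a \sim b$ for $a,b \in J_2$ then $\langle a,b\rangle = \langle z\rangle$ would force both $\langle a\rangle$ and $\langle b\rangle$ into a single monogenic subsemigroup, and maximality together with $m_a = m_b = 1$ would give $\langle a\rangle = \langle b\rangle$, contradicting distinctness of representatives. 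For the second, suppose $a \in J_2$ and $x \in I_2$ with $a \sim x$; then $\langle a, x\rangle = \langle z\rangle$ and $a \in \langle z\rangle$, and since $a$ generates a maximal cyclic subgroup one would deduce $a \in \langle x\rangle$, directly contradicting the defining condition of $J_2$. Hence $I_2 \cup J_2$ is independent and, being a disjoint union (elements of $J_2$ have index one while elements of $I_2$ have index two), has size $|I_2| + |J_2|$.

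For the upper bound I would let $\mathcal I$ be an arbitrary independent set and bound $|\mathcal I|$. The key structural input is that every element lies in some maximal monogenic subsemigroup (Lemma \ref{ch1-exponent}(ii)), so $S = \bigcup_{a \in \mathcal M}\langle a\rangle$, and within each $\langle a\rangle = M(m_a, r_a)$ the analysis of Theorem \ref{ch2-independence-monogenic} shows that an independent set meets $\langle a\rangle$ in at most $\lfloor m_a/2\rfloor + 1$ vertices, with the ``$+1$'' contributed by at most one element of the cyclic subgroup $\mathcal K_a$ (since $\mathcal K_a$ induces a complete subgraph). The plan is to absorb these ``$+1$'' contributions from the index-one elements separately: I would argue that the elements of $\mathcal I$ of index one that are \emph{not} covered by the $J_2$ term can each be associated, via the doubling trick $a^i \mapsto a^{i2^{\alpha_i}}$ from the proof of Theorem \ref{ch2-independence-monogenic}, with a distinct index-two element, so that the genuine surplus beyond $\sum_{a \in \mathcal M}\lfloor m_a/2\rfloor$ is controlled by $|J_2|$. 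Summing the per-component bound $\lfloor m_a/2\rfloor$ over the finitely many $a \in \mathcal M$ and adding the $|J_2|$ overflow term then yields $|\mathcal I| \le |J_2| + \sum_{a \in \mathcal M}\lfloor m_a/2\rfloor$.

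The main obstacle, and the step requiring the most care, is the upper bound's bookkeeping when a single element lies in several maximal monogenic subsemigroups $\langle a\rangle$, so that the naive sum $\sum_{a}\lfloor m_a/2\rfloor$ overcounts or the ``$+1$'' terms from different components interact. The delicate point is to show precisely that the only way an independent set can exceed $\sum_{a\in\mathcal M}\lfloor m_a/2\rfloor$ is through index-one elements generating maximal cyclic subgroups that are not subsumed under any index-two element, which is exactly the quantity $|J_2|$ counts; the doubling argument must be run simultaneously across components while keeping the associated index-two elements distinct, and this is where I expect the technical heart of the argument to lie.
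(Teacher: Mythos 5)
Your lower bound is complete and matches the paper's argument step for step: $I_2$ is independent by Lemma \ref{index-one}; two elements of $J_2$ cannot be adjacent because a monogenic $\langle z\rangle$ containing two index-one elements whose cyclic subgroups are maximal must coincide with both of them, hence with each other, contradicting distinctness of $\tau$-representatives; and an edge between $x \in I_2$ and $a \in J_2$ would force $a \in \langle x\rangle$, contradicting the definition of $J_2$. Your justification of that last containment via ``maximality'' is loose --- the paper instead derives it from the index computation in the proof of Lemma \ref{ch2- adjacency-cond-more-than-one-index}: writing $z = x^u y^v$, any case with $v \ne 0$ forces $m_z = 1$ and hence $m_x = 1$, contradicting $m_x = 2$, so $z \in \langle x \rangle$ --- but the endpoint is right and the repair is routine.

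The upper bound, however, has a genuine gap, and it sits exactly where you predicted. Your sketch assigns the surplus index-one elements to index-two elements ``via the doubling trick,'' but in the paper the doubling map $a^i \mapsto a^{i2^{\alpha_i}}$ only disposes of elements of index greater than $2$; the decisive question is what happens to elements of $\mathcal I$ lying in a kernel $\mathcal K_a$, and the answer rests on a trichotomy of $\mathcal M$ that you never formulate. For $B$-type generators --- those admitting some $i$ with $\lceil m_a/2\rceil \le i \le m_a - 1$ and $(i, r_a) = 1$ --- one has $\mathcal K_{a^i} = \mathcal K_a$, so a kernel element of $\mathcal I$ is adjacent to the index-two element $a^i$, which absorbs it into the $\lfloor m_a/2\rfloor$ budget (Case 2 of Theorem \ref{ch2-independence-monogenic}). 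For $A$-type generators (no such $i$) and group-type generators ($m_a = 1$), the kernel element genuinely exceeds the budget and must be injected into $J_2$; constructing that injection is the technical heart of the paper's proof. One embeds $\mathcal K_a$ in a maximal cyclic subgroup $\langle d\rangle$, passes to the $\tau$-representative, and must verify $d \notin \langle x\rangle$ for every $x \in I_2$ --- this verification uses precisely the $A$-condition $(j, r_a) > 1$ on the index-two window together with the stipulation that the element lies in no $B$-type component --- and injectivity then follows because two preimages would lie in a single cyclic subgroup, whose induced subgraph is complete (Corollary \ref{ch2-Gamma(G)-complete}), contradicting independence. None of this appears in your proposal beyond the (accurate) forecast that it is needed, so the upper bound as written is a plan rather than a proof.
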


\begin{proof}
	To find a lower bound of $\alpha(\Gamma(S))$, we show that $I_2 \cup J_2$ is an independent set of $\Gamma(S)$. By Lemma \ref{index-one}, $I_2$ is an independent set. If $a, b \in J_2$ such that $a \sim b$, then $\langle a, b \rangle = \langle z \rangle$ for some $z \in S$. Since $o(z) \leq 2n$ (see Lemma \ref{ch1-exponent})  and $m_a = m_b = 1$, we get $\langle a, b \rangle = \langle z \rangle$ is a cyclic subgroup of $S$. The maximality of $\langle a \rangle$  and $\langle b \rangle$ follows that $\langle z \rangle = \langle a \rangle = \langle b \rangle$, which is not possible. Consequently, $J_2$ is an independent set. Further, to show $I_2 \cup J_2$ is an independent set, let $x \in I_2$ and $y \in J_2$ such that $x \sim y$. By Lemma \ref{ch1-index-a^i} and by the proof of Lemma \ref{ch2- adjacency-cond-more-than-one-index}, we have $y \in \langle x \rangle$; a contradiction of $y \in J_2$. Thus, $I_2 \cup J_2$ is an independent set. Since $I_2 \cap J_2 = \varnothing$, we get $\alpha (\Gamma(S)) \geq |I_2| + |J_2|$.
	
	Now we obtain an upper bound for $\alpha (\Gamma(S))$. Note that the sets 
	\[A = \left\{a \in \mathcal{M} : \; m_a > 1, \; (i, r_a) > 1 \; {\rm for \; all} \; i, \; \left\lceil \frac{m_a}{2} \right\rceil  \leq i \leq m_a -1 \right\},\]
	\[B = \left\{a \in \mathcal{M} : \; m_a > 1, \; (i, r_a) = 1 \; {\rm for \; some} \; i, \; \left\lceil \frac{m_a}{2} \right\rceil  \leq i \leq m_a -1 \right\},\]
	and $C = \{a \in \mathcal{M} : \; m_a = 1 \}$ forms a partition of $\mathcal{M}$. Since $S$ is of exponent $n$, by Lemma \ref{ch1-exponent}, note that $S = \left(\displaystyle\mathop{\cup}_{a \in A}\langle a \rangle\right) \cup \left(\displaystyle\mathop{\cup}_{b \in B}\langle b \rangle\right) \cup \left(\displaystyle\mathop{\cup}_{c \in C}\langle c \rangle\right)$. Let $\mathcal{I}$ be any independent set in $\Gamma(S)$. We assume that $x_1, x_2, \ldots, x_l$,\\ $y_1, y_2, \ldots, y_m, z_1, z_2, \ldots, z_n$ are the elements of index one belongs to $\mathcal{I}$ such that

	\begin{itemize}
		\item for each $i$, where $1 \leq i \leq l$, $x_i \in  \langle a \rangle$ for some $a \in A$ and $x_i \notin \langle b \rangle$ for any $b \in B$. \hfill $(2.1)$
		
		\item for each $j$, where $1 \leq j \leq m$, $y_j \in  \langle b \rangle$ for some $b \in B$. \hfill $(2.2)$
		
		\item for each $k$, where $1 \leq k \leq n$, $z_k \in  \langle c \rangle$ for some $c \in C$ and $z_k \notin \langle b \rangle$ for any $b \in B$. \hfill $(2.3)$
	\end{itemize} 
	
	Now consider the set $J = \{x \in \mathcal{I} : \; m_x > 1 \}$. To prove our result, it is sufficient to show $|J| + m \leq  \displaystyle\mathop{\sum}_{a \in \mathcal{M}} \left\lfloor \frac{m_{a}}{2} \right\rfloor$ and $l + n \leq |J_2|$. By Theorem \ref{ch2-independence-monogenic}, $|\mathcal{I} \cap \langle b \rangle| \leq \left\lfloor \frac{m_b}{2}\right\rfloor$ for all $b \in B$ and by the proof of Theorem \ref{ch2-independence-monogenic}, $|\mathcal{I} \cap (\langle a \rangle \setminus \mathcal{K}_a)| \leq \left\lfloor \frac{m_a}{2}\right\rfloor$ for all $a \in A$. For $x \in S$ such that $m_x > 1$, we get $x \in \left(\displaystyle\mathop{\cup}_{a \in A}\langle a \rangle\right) \cup \left(\displaystyle\mathop{\cup}_{b \in B}\langle b \rangle\right)$. It follows that $|J| + m \leq \displaystyle\mathop{\sum}_{a \in A \cup B}\left\lfloor \frac{m_a}{2}\right\rfloor = \displaystyle\mathop{\sum}_{a \in \mathcal M}\left\lfloor \frac{m_a}{2}\right\rfloor$. 
	
	Next we show that $l + m \leq |J_2|$. We establish a one-one map from the set  $\mathcal{O} = \{x_1, x_2, \ldots, x_l, z_1, z_2, \ldots, z_m\}$ to some subset of $J_2$. In view of this, for each $p \in \mathcal{O}$, first we provide an element of $J_2$ corresponding to $p$. Let $p \in \mathcal{O}$ such that $p = z_k$ for some $k$, where $1 \leq k \leq n$. Since $z_k \in \langle c \rangle$ for some $c \in C \subseteq \mathcal{M}$, we have $m_c = 1$ and so $\langle c \rangle$ is a maximal cyclic subgroup of $S$. Choose $v \in X$ such that $\langle c \rangle = \langle v \rangle$. Then clearly, $v \in \overline{\mathcal{M}}$. If $v \in \langle x \rangle$ for some $x \in I_2$, then $\langle v \rangle = \langle x \rangle$ because $\langle v \rangle$ is a maximal monogenic subsemigroup of $S$. But $\langle v \rangle = \langle x \rangle$ is not possible because $m_v = 1$ and $m_x = 2$. It follows that for $p = z_k$ we have $v \in J_2$ such that $z_k \in \langle v \rangle$. We may now assume that $p' \in \mathcal{O}$ such that $p' = x_i$ for some $i$, where $1 \leq i \leq l$. Then $p' = x_i \in \langle a \rangle$ for some $a \in A$. Since $m_{x_i} = 1$, we get $x_i \in \mathcal{K}_a$. By the similar argument used in proof of Lemma \ref{ch1-exponent} (part (ii)), we get $\mathcal{K}_a \subseteq \langle d \rangle$ for some $d \in \overline{\mathcal{M}}$. If $d \in \langle x \rangle$ for some $x \in I_2$, then $x_i \in \langle x \rangle$. Since $m_x = 2$ , we have either $x \in \langle a \rangle$ for some $a \in A$ or $x \in \langle b \rangle$ for some $b \in B$. If $x \in \langle a \rangle$, then $x = a^j$ for some $j$. Clearly, $\langle d \rangle \subseteq \langle a^j \rangle$ and $\langle d \rangle = \mathcal{K}_a$ as $d \in \overline{\mathcal M}$. Then $\mathcal{K}_a \subseteq \langle a^j \rangle$ follows that $|K_{a^j}| = |\mathcal{K}_a| = \frac{r_a}{(j, \; r_a)} = r_a$ and so $(j, \; r_a) = 1$. Since $m_x = 2$ and $x = a^j$, by Lemma \ref{ch1-index-a^i}, we get $\left\lceil \frac{m_a}{2} \right\rceil \leq j \leq m_a - 1$. Therefore, $(j, r_a) > 1$ which is not possible. If $x \in \langle b \rangle$ for some $b \in B$, then $x_i \in \langle b \rangle$; a contradiction of $(2.1)$. Now choose $w \in X$ such that $\langle w \rangle = \langle d \rangle$. Thus, for $p' = x_i$ there exits $w \in J_2$ such that $x_i \in \langle w \rangle$. For each $p \in \mathcal{O}$, choose exactly one $s \in J_2$ such that $p \in \langle s \rangle$ for some $s \in J_2$. In view of this the assignment $f : p \mapsto s$ is a map from $\mathcal{O}$ to some subset $D$ of $J_2$. In fact, this map is one-one. For instance, if $p, q \in \mathcal{O}$ such that $pf = qf = s$ then $p, q \in \langle s \rangle$. It follows that $p \sim q$ (see Corollary \ref{ch2-Gamma(G)-complete}) which is a contradiction as $p, q \in I$. Consequently, from above $l + n = |\mathcal{O}| \leq |J_2|$.
\end{proof}

Now, we determine $\alpha(\Gamma(S))$, when $S$ is a completely $0$-simple semigroup. We require the following lemma.

\begin{lemma}\label{ch1-comp-0-simple-index}
	Let $x = (i, a, \lambda) \in \mathfrak{M}^0[G, I, \Lambda, P]$ such that $o(x)$ is finite.
	\begin{enumerate}[\rm (i)]
		\item If $p_{\lambda i} \ne 0$, then $m_x = 1$.

		\item If $p_{\lambda i} = 0$, then $m_x = 2$. Moreover, $x \in \mathcal{M}$.
	\end{enumerate}
\end{lemma}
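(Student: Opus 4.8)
The plan is to compute the powers of $x=(i,a,\lambda)$ explicitly from the Rees matrix multiplication rule and then read off the index from the first power at which a repetition occurs. Everything hinges on a short induction: writing $p=p_{\lambda i}$, one checks $x^{n}=(i,(ap)^{n-1}a,\lambda)$ for all $n\ge 1$ whenever none of the intermediate products hits $0$.

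First I would handle case (i), where $p_{\lambda i}\neq 0$ so $p\in G$. Here the induction goes through unobstructed, since $p\neq 0$ and $G$ is a group force every product to stay inside $G$; thus $x^{n}=(i,(ap)^{n-1}a,\lambda)$ for all $n$, the first and third coordinates never changing. The map $x^{n}\mapsto (ap)^{n-1}$ is then a bijection between $\langle x\rangle$ and the set of nonnegative powers of $g=ap$ in $G$ (cancel $a$ on the right), so $o(x)$ finite forces $g$ to have finite order $d$ in $G$, giving $g^{d}=e$. Hence $x^{1+d}=(i,g^{d}a,\lambda)=(i,a,\lambda)=x$, which means $x\in\mathcal{K}_x$, i.e. $m_x=1$.

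Next, for case (ii) with $p_{\lambda i}=0$, the multiplication rule gives $x^{2}=0$ at once, and since $0$ is idempotent we get $x^{n}=0$ for every $n\ge 2$. As $x\neq 0$, the powers $x,x^{2}$ are distinct while $x^{2}=x^{3}$, so $\langle x\rangle=\{x,0\}$ has index $m_x=2$ and period $1$, settling the first assertion of (ii).

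The one nonroutine point is the ``moreover'' claim that $x\in\mathcal{M}$, i.e. that $\langle x\rangle=\{x,0\}$ is not properly contained in any monogenic subsemigroup; this is where I expect the real work to lie. I would argue by contradiction: if $\langle x\rangle\subsetneq\langle y\rangle$ then $x=y^{k}$ for some $k$, and proper containment rules out $k=1$, so $k\ge 2$ and in particular $y^{2}\neq 0$. Writing $y=(j,b,\mu)$, the condition $y^{2}\neq 0$ means $p_{\mu j}\neq 0$, and then the computation of case (i) applies to $y$: every power $y^{n}$ is nonzero with first coordinate $j$ and third coordinate $\mu$. Matching $y^{k}=x=(i,a,\lambda)$ forces $j=i$ and $\mu=\lambda$, whence $p_{\mu j}=p_{\lambda i}=0$, contradicting $p_{\mu j}\neq 0$. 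Therefore no such $y$ exists, $\langle x\rangle$ is a maximal monogenic subsemigroup, and $x\in\mathcal{M}$.
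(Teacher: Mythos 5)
Your proposal is correct and follows essentially the same route as the paper: compute $x^n = (i, (ap_{\lambda i})^{n-1}a, \lambda)$ to conclude $m_x = 1$ in case (i), observe $x^2 = 0$ in case (ii), and rule out a proper monogenic extension $\langle x \rangle \subsetneq \langle y \rangle$ by contradiction on $y = (j, b, \mu)$. If anything, your maximality step is more complete than the paper's: where the paper merely asserts ``Note that $p_{\mu j} = 0$'' (and then $y^2 = 0$ forces $x = y$), you justify the dichotomy by showing that $x = y^k$ with $k \ge 2$ and $x \ne 0$ would force $p_{\mu j} \ne 0$, whereupon the case-(i) power computation pins the coordinates $(j, \mu) = (i, \lambda)$ and yields $p_{\mu j} = p_{\lambda i} = 0$, a contradiction.
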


\begin{proof}
	Suppose $x = (i, a, \lambda) \in \mathfrak{M}^0[G, I, \Lambda, P]$ such that $o(x)$ is finite and let $p_{\lambda i} \ne 0$. Then $(i, a, \lambda)^n = (i, (ap_{i\lambda})^{n-1}a, \lambda)$. Choose $n$ such that $n - 1$ is the order of $ap_{i\lambda}$, we get $(i, a, \lambda)^n = (i, a, \lambda)$. Consequently, $m_x = 1$. Now to prove (ii), we assume that $p_{\lambda i} = 0$. Then $x^2 = (i, a, \lambda)^2 = 0$ implies $m_x = 2$. Let if possible, $\langle x \rangle \subset \langle y \rangle$ for some $y = (j,b,\mu) \in \mathfrak{M}^0[G, I, \Lambda, P]$. Since $o(x)$ is finite so that $o(y)$ is finite. Note that $p_{\mu j} = 0$. Then $y^2 = 0$ gives $x = y$; a contradiction. 
\end{proof}

In view of Lemma \ref{ch1-comp-0-simple-index}, note that $I_2 = \{(i,a, \lambda) : \; a\in G \; {\rm and} \; p_{\lambda i} = 0 \}$. Note that $I_2 \subseteq \mathcal{M}$. If  $x = (i, a, \lambda) \in \mathfrak{M}^0[G, I, \Lambda, P] \setminus I_2$ then $m_x = 1$. Observe that $|I_2|  = \displaystyle\mathop{\sum}_{a \in I_2}\left\lfloor \frac{m_a}{2}\right\rfloor$. By Theorem \ref{ch2-inde-bound}, we get the independence number of $\Gamma(S)$, where $S$ is a finite completely $0$-simple semigroup, in the following corollary.

\begin{corollary}
	Let $S$ be a finite completely $0$-simple semigroup. Then  \[\alpha(\Gamma(S)) = |J_2| +  \displaystyle\mathop{\sum}_{a \in \mathcal{M}} \left\lfloor \frac{m_a}{2}\right\rfloor. \]
\end{corollary}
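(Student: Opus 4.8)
The plan is to apply Theorem \ref{ch2-inde-bound} directly to a finite completely $0$-simple semigroup $S = \mathfrak{M}^0[G, I, \Lambda, P]$, showing that the lower and upper bounds coincide and equal the claimed value. Since $S$ is finite it is certainly of bounded exponent, and $\mathcal{M}$ is finite, so the hypotheses of Theorem \ref{ch2-inde-bound} are met. The strategy is to extract from Lemma \ref{ch1-comp-0-simple-index} a complete description of the indices of elements of $S$: every element $(i,a,\lambda)$ with $p_{\lambda i}\neq 0$ has $m_x = 1$, while every element with $p_{\lambda i}=0$ has $m_x = 2$. In particular there are \emph{no} elements of index exceeding $2$, so the sum $\sum_{a\in\mathcal{M}}\lfloor m_a/2\rfloor$ collapses dramatically.

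First I would nail down the two sets that drive the bound. By Lemma \ref{ch1-comp-0-simple-index}, $I_2 = \{(i,a,\lambda) : p_{\lambda i} = 0\}$, and each such element is maximal, so $I_2 \subseteq \mathcal{M}$ with $m_x = 2$ for $x \in I_2$. Every other nonzero element has index $1$, contributing $\lfloor 1/2\rfloor = 0$ to the sum. Hence $\sum_{a\in\mathcal{M}}\lfloor m_a/2\rfloor = \sum_{a\in I_2}\lfloor m_a/2\rfloor = \sum_{a\in I_2} 1 = |I_2|$, which is exactly the observation already recorded in the excerpt just before the corollary. So the upper bound from Theorem \ref{ch2-inde-bound} reads $|J_2| + |I_2|$.

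Next I would show the lower bound matches. The lower bound in Theorem \ref{ch2-inde-bound} is $|I_2| + |J_2|$, which is literally the same quantity. Therefore the two bounds coincide and force $\alpha(\Gamma(S)) = |J_2| + |I_2| = |J_2| + \sum_{a\in\mathcal{M}}\lfloor m_a/2\rfloor$, giving the stated formula. The essential point is simply that in a completely $0$-simple semigroup no element can have index larger than $2$, so the only positive contributions to the index sum come from the index-$2$ elements, and these are precisely counted by $|I_2|$.

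The main obstacle, though it is a mild one, is making sure the identifications between the abstract sets $A$, $B$, $C$, $I_2$, $J_2$ appearing in Theorem \ref{ch2-inde-bound} and their concrete meaning in $S$ are handled cleanly: one must confirm that the set $A$ (elements of index more than one whose periods are never coprime to the relevant $i$) and the set $B$ are both empty here, since there are no elements of index $\geq 3$ at all and the index-$2$ elements $x$ satisfy $x^2 = 0$, so the refined distinction between Case 1 and Case 2 of Theorem \ref{ch2-independence-monogenic} never arises. Once this is checked, the collapse of the sum to $|I_2|$ and the squeeze of the two bounds is immediate, and no further computation is needed.
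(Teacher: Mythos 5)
Your proof is correct and is essentially the paper's own argument: the paper likewise uses Lemma \ref{ch1-comp-0-simple-index} to note that every element of $\mathfrak{M}^0[G,I,\Lambda,P]$ has index $1$ or $2$, that $I_2 = \{(i,a,\lambda) : p_{\lambda i} = 0\} \subseteq \mathcal{M}$, hence $\sum_{a\in\mathcal{M}}\left\lfloor m_a/2\right\rfloor = |I_2|$, and then obtains the formula by observing that the lower and upper bounds of Theorem \ref{ch2-inde-bound} coincide. One minor inaccuracy in your final paragraph: the set $B$ is \emph{not} empty here --- an index-$2$ element $x$ with $x^2 = 0$ has $m_x = 2$ and $r_x = 1$, and with $i = 1$ we get $(i, r_x) = 1$, so $I_2 \subseteq B$ --- but this is immaterial, since $A$, $B$, $C$ appear only inside the proof of Theorem \ref{ch2-inde-bound}, whose statement you correctly apply as a black box.
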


\begin{theorem}
	Let $S$ be a semigroup such that it satisfies one of the following condition
	\begin{enumerate}[\rm (i)]
		\item there exists $a \in S$ such that $o(a)$ is infinite
		
		\item $M = {\rm sup}\{m_a : a \in S \}$ is infinite
		
		\item the set $E(S)$ is infinite
		
		\item the set $\mathcal{M} = \{a \in S : \; \langle a \rangle \; {\rm is \; a \; maximal \; monogenic \; subsemigroup \; of} \; S \}$ is infinite.
	\end{enumerate}
	Then the independence number\index{independence number} of $\Gamma(S)$ is infinite.
\end{theorem}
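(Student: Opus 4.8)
The plan is to treat the four sufficient conditions one at a time, exhibiting in each case either a single infinite independent set or independent sets of unbounded finite size (both of which force $\alpha(\Gamma(S)) = \infty$). The structural tool I would use throughout is the observation that for any subsemigroup $T \le S$ the graph $\Gamma(T)$ is exactly the subgraph of $\Gamma(S)$ induced on $T$: adjacency of $x,y$ depends only on whether $\langle x, y\rangle$ is monogenic, and this subsemigroup is computed identically inside $T$ and inside $S$. Consequently any independent set of $\Gamma(T)$ is independent in $\Gamma(S)$. I would dispose of condition (i) first: if $o(a)$ is infinite then $\langle a\rangle$ is an infinite monogenic semigroup, and the first case of Theorem \ref{ch2-independence-monogenic} supplies an infinite independent set of $\Gamma(\langle a\rangle)$ (explicitly $\{a^p : p \text{ prime}\}$), which by the induced-subgraph remark stays independent in $\Gamma(S)$. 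Because of this, while handling (ii) and (iv) I may assume every element of $S$ has finite order, since otherwise (i) already applies.

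For condition (ii), given $\sup\{m_a : a \in S\} = \infty$, I would for each $N$ choose $a \in S$ with $m_a \ge 2N$. As every element has finite order, $\langle a\rangle$ is a finite monogenic semigroup with $m_a > 1$, so Theorem \ref{ch2-independence-monogenic} gives $\alpha(\Gamma(\langle a\rangle)) \ge \left\lfloor \frac{m_a}{2}\right\rfloor \ge N$. The induced-subgraph remark transfers an independent set of this size into $\Gamma(S)$; since $N$ is arbitrary, independent sets of $\Gamma(S)$ are unbounded and $\alpha(\Gamma(S)) = \infty$.

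Condition (iii) I would handle by showing that $E(S)$ is itself an independent set. The point is that any monogenic semigroup contains a unique idempotent, namely the identity of its kernel $\mathcal{K}_a$ (an element of the tail is never idempotent, and the cyclic group $\mathcal{K}_a$ has exactly one idempotent). Hence if two distinct idempotents $e, f$ were adjacent, then $\langle e, f\rangle = \langle z\rangle$ would be monogenic and contain two distinct idempotents, a contradiction. Thus distinct idempotents are pairwise non-adjacent, and an infinite $E(S)$ is an infinite independent set.

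Finally, for condition (iv) the central claim is that if $a, b \in \mathcal{M}$ with $\langle a\rangle \ne \langle b\rangle$, then $a \nsim b$: were they adjacent, $\langle a, b\rangle = \langle z\rangle$ would be a monogenic subsemigroup containing both $\langle a\rangle$ and $\langle b\rangle$, forcing $\langle a\rangle = \langle z\rangle = \langle b\rangle$ by maximality, a contradiction. So any family of elements of $\mathcal{M}$ generating pairwise distinct subsemigroups is independent. Since a finite monogenic semigroup has only finitely many generators, infinitely many elements of $\mathcal{M}$ sharing one generated subsemigroup would force that subsemigroup — hence an element — to have infinite order, which is covered by (i); otherwise an infinite $\mathcal{M}$ yields infinitely many distinct subsemigroups $\langle a\rangle$, and selecting one generator from each produces an infinite independent set. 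I expect this last bookkeeping step to be the main obstacle, namely ruling out that an infinite $\mathcal{M}$ collapses into finitely many generated subsemigroups; the reduction to condition (i) for infinite-order elements is exactly what makes the argument go through.
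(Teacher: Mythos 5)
Your proposal is correct and, at its core, follows the same four-case strategy as the paper: the prime powers $\{a^p : p \ \text{prime}\}$ for (i), unboundedly large independent sets inside monogenic subsemigroups of large index for (ii), pairwise non-adjacency of idempotents for (iii), and maximality forcing $a \nsim b$ for (iv). Two differences are worth recording. In (ii) the paper does not route through Theorem \ref{ch2-independence-monogenic}; it directly exhibits the set $\{a^p : p \ \text{prime}, \ p < k\}$ inside an element with $m_a \geq k$ and applies Lemma \ref{ch2- adjacency-cond-more-than-one-index} — your appeal to the computed value $\alpha(\Gamma(\langle a \rangle)) \geq \left\lfloor \frac{m_a}{2} \right\rfloor$ is equivalent (and gives a larger set), at the cost of quoting a stronger result; both transfers to $\Gamma(S)$ are legitimate by your (correct) observation that $\Gamma(T)$ is the induced subgraph of $\Gamma(S)$ on any subsemigroup $T$. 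More substantively, in (iv) you identified and repaired a step the paper silently elides: the paper passes from ``$\mathcal{M}$ is infinite'' to ``$S$ contains infinitely many maximal monogenic subsemigroups'' with no justification, whereas your bookkeeping — a finite monogenic semigroup has only finitely many generators, so a collapse of an infinite $\mathcal{M}$ onto finitely many generated subsemigroups would produce an infinite monogenic subsemigroup, hence an element of infinite order, which is absorbed into case (i) — is exactly what is needed to make that sentence rigorous. Similarly, your explicit argument for (iii) (a monogenic semigroup contains at most one idempotent, namely the identity of its kernel $\mathcal{K}_a$ when it is finite and none when it is infinite) supplies the justification behind the paper's one-line assertion that distinct idempotents are never adjacent. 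So the approach is the paper's, but with the gaps filled; there is nothing to correct.
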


\begin{proof}
	If there exists $a \in S$ such that $o(a)$ is infinite, then by the similar argument used in the proof  of Lemma \ref{ch2- adjacency-cond-more-than-one-index}, the set $\{a^p : p \; {\rm is \; prime} \}$ is an independent set of  $\Gamma(S)$. Now suppose that  $M = {\rm sup}\{m_a : a \in S \}$ is infinite. For $k \in \mathbb N$, we establish an independent set of size $\eta(k)$, where $\eta(k)$ is the number of primes less than $k$. Note that there exists $a \in S$ such that $m_a \geq k$. By Lemma \ref{ch2- adjacency-cond-more-than-one-index}, the set $\{a^p : \; p \; {\rm is \; a \; prime \; and} \; p < k \}$ is an independent set of size $\eta(k)$. Thus, $\alpha(\Gamma(S))$ is infinite. If $E(S)$ is infinite, then $\alpha(\Gamma(S))$ must be infinite because any two idempotent elements are not adjacent in $\Gamma(S)$. Further, we assume that the set $\mathcal{M}$ is infinite. 
	Then $S$ contains infinitely many maximal monogenic subsemigroup of $S$. Let $a, b \in \mathcal{M}$ such that $\langle a \rangle  \ne \langle b \rangle$. Clearly, $a \nsim b$. It follows that $\alpha(\Gamma(S))$ is infinite.
\end{proof}

\section{Acknowledgement}
The second author wishes to acknowledge the support of MATRICS Grant  (MTR/2018/000779) funded by SERB, India.


\begin{thebibliography}{10}
	
	\bibitem{a.Aalipour2017}
	G.~Aalipour, S.~Akbari, P.~J. Cameron, R.~Nikandish, and F.~Shaveisi.
	\newblock On the structure of the power graph and the enhanced power graph of a
	group.
	\newblock {\em Electron. J. Combin.}, 24(3):$\# P$3.16, 2017.
	
	\bibitem{a.Abdollahi2007-noncyclic}
	A.~Abdollahi and A.~M. Hassanabadi.
	\newblock Noncyclic graph of a group.
	\newblock {\em Comm. Algebra}, 35(7):2057--2081, 2007.
	
	\bibitem{a.afkhami2014cyclic}
	M.~Afkhami, A.~Jafarzadeh, K.~Khashyarmanesh, and S.~Mohammadikhah.
	\newblock On cyclic graphs of finite semigroups.
	\newblock {\em J. Algebra Appl.}, 13(07):1450035, 2014.
	
	\bibitem{a.Araujo2015}
	J.~Ara\'ujo, W.~Bentz, and J.~Konieczny.
	\newblock The commuting graph of the symmetric inverse semigroup.
	\newblock {\em Israel J. Math.}, 207(1):103--149, 2015.
	
	\bibitem{a.Araujo2011}
	J.~Ara{\'{u}}jo, M.~Kinyon, and J.~Konieczny.
	\newblock Minimal paths in the commuting graphs of semigroups.
	\newblock {\em European J. Combin.}, 32(2):178--197, 2011.
	
	\bibitem{b.bosak1964graphs}
	J.~Bos\'ak.
	\newblock {\em The graphs of semigroups}.
	\newblock Publ. House Czechoslovak Acad. Sci., Prague, 1964.
	
	\bibitem{a.MKsen2009}
	I.~Chakrabarty, S.~Ghosh, and M.~K. Sen.
	\newblock Undirected power graphs of semigroups.
	\newblock {\em Semigroup Forum}, 78(3):410--426, 2009.
	
	\bibitem{a.Dalal2020chromatic}
	S.~Dalal and J.~Kumar.
	\newblock Chromatic number of the cyclic graph of infinite semigroup.
	\newblock {\em Graphs Combin.}, 36(1):109--113, 2020.
	
	\bibitem{a.brandt}
	Y.~Hao, X.~Gao, and Y.~Luo.
	\newblock On the {C}ayley graphs of {B}randt semigroups.
	\newblock {\em Comm. Algebra}, 39(8):2874--2883, 2011.
	
	\bibitem{b.Howie}
	J.~M. Howie.
	\newblock {\em Fundamentals of semigroup theory}.
	\newblock Oxford University Press, Oxford, 1995.
	
	\bibitem{a.kelarev2002directed}
	A.~Kelarev and S.~Quinn.
	\newblock Directed graphs and combinatorial properties of semigroups.
	\newblock {\em J. Algebra}, 251(1):16--26, 2002.
	
	\bibitem{a.kelarev2001powermatrices}
	A.~Kelarev, S.~Quinn, and R.~Smolikova.
	\newblock Power graphs and semigroups of matrices.
	\newblock {\em Bull. Austral. Math. Soc.}, 63(2):341--344, 2001.
	
	\bibitem{a.KhosraviCayleygraphs2012}
	B.~Khosravi and B.~Khosravi.
	\newblock A characterization of {C}ayley graphs of {B}randt semigroups.
	\newblock {\em Bull. Malays. Math. Sci. Soc. (2)}, 35(2):399--410, 2012.
	
	\bibitem{a.Ma2013}
	X.~L. Ma, H.~Q. Wei, and G.~Zhong.
	\newblock The cyclic graph of a finite group.
	\newblock {\em Algebra}, 2013:1--7, 2013.
	
	\bibitem{a.Shitov2020}
	Y.~Shitov.
	\newblock A matrix ring with commuting graph of maximal diameter.
	\newblock {\em J. Combin. Theory Ser. A}, 141:127--135, 2016.
	
	\bibitem{a.shitov}
	Y.~Shitov.
	\newblock Coloring the power graph of a semigroup.
	\newblock {\em Graphs Combin.}, 33(2):485--487, 2017.
	
	\bibitem{b.West}
	D.~B. West.
	\newblock {\em Introduction to Graph Theory}.
	\newblock Second edition, Prentice Hall, 1996.
	
\end{thebibliography}
\end{document}